\theoremstyle{definition}
\newtheorem{definition}{Definition}[section]
\newtheorem{notation}[definition]{Notation}
\newtheorem{problem}[definition]{Problem}
\newtheorem{rem}[definition]{Remark}
\newtheorem{example}[definition]{Example}
\theoremstyle{plain}
\newtheorem{thm}[definition]{Theorem}
\newtheorem{lemma}[definition]{Lemma}
\newtheorem{prop}[definition]{Proposition}
\newtheorem{cor}[definition]{Corollary}
\newcommand{\nodetails}[2]{ \ifthenelse{\not \boolean{details}}{#1}{#2} }
\newcommand{\cI}{\mathcal{I}}
\newcommand{\cJ}{\mathcal{J}}
\newcommand{\cO}{\mathcal{O}}
\newcommand{\cP}{\mathcal{P}}
\newcommand{\fp}{\mathfrak{p}}
\newcommand{\fq}{\mathfrak{q}}
\newcommand{\uR}{\underline{R}}
\newcommand{\sphere}{\mathbb{S}}
\DeclareMathOperator*{\hocolim}{hocolim}
\newcommand{\Loc}[2]{#1\lbrack #2^{-1} \rbrack}
\newcommand{\Primes}{P} 
\newcommand{\Zp}{\Z_{(p)}} 
\newcommand{\ZP}{\Z_{(\Primes)}} 
\newcommand{\Spec}{\mathrm{Spec}} 
\newcommand{\A}[1]{A(#1)}
\newcommand{\AP}[1]{A(#1)_{(\Primes)}}
\newcommand{\Res}{\mathrm{Res}}
\newcommand{\RU}[1]{RU(#1)}
\newcommand{\RUp}[1]{RU(#1)_{(p)}}
\newcommand{\RUP}[1]{RU(#1)_{(P)}}
\newcommand{\RO}[1]{RO(#1)}
\newcommand{\ROP}[1]{RO(#1)_{(P)}}
\newcommand{\RX}[1]{R(#1)}
\newcommand{\RXP}[1]{R(#1)_{(P)}}
\newcommand{\lin}{\mathit{lin}} 
\newcommand{\KUG}{KU_G}
\newcommand{\KUGp}{(KU_G)_{(p)}}
\newcommand{\KUGP}{(KU_G)_{(\Primes)}}
\newcommand{\KOG}{KO_G}
\newcommand{\KOGP}{(KO_G)_{(\Primes)}}
\newcommand{\KXG}{K_G}
\newcommand{\KXGP}{(K_G)_{(\Primes)}}
\newcommand{\CC}[1]{#1 / \!{\sim}} 
\newcommand{\OP}[1]{O^\Primes(#1)}
\newcommand{\pprime}[1]{#1_{p^\perp}}
\newcommand{\qprime}[1]{#1_{q^\perp}}
\newcommand{\argprime}[2]{#1_{#2^\perp}}
\newcommand{\Ppart}[1]{#1_{\Primes}}
\newcommand{\Pprime}[1]{#1_{\Primes^\perp}}
\newcommand{\sfield}{\mathbb{F}} 
\newcommand{\sint}{\mathcal{O}_\sfield} 
\newcommand{\sintp}{\mathcal{O}_{\sfield,(p)}} 
\newcommand{\sintP}{\mathcal{O}_{\sfield,(\Primes)}} 
\newcommand{\Qpg}[2]{Q(#1, #2)} 
\newcommand{\IL}{\mathcal{I}_L}
\newcommand{\IC}{\mathcal{I}_C}
\newcommand{\Icyc}{\mathcal{I}_{cyc}}
\newcommand{\OC}{\mathcal{O}_C}
\newcommand{\Ocyc}{\mathcal{O}_{cyc}}
\newcommand{\Q}{\mathbb{Q}}
\newcommand{\Z}{\mathbb{Z}}
\newcommand{\one}{\mathbb{1}}
\newcommand\restr[2]{{
\left.\kern-\nulldelimiterspace 
#1 
\vphantom{\big|} 
\right|_{#2} 
}}
\author{Benjamin B\"{o}hme}
\address{Max Planck Institute for Mathematics\\
Vivatsgasse 7\\
53111 Bonn\\
Germany}
\email{boehme@mpim-bonn.mpg.de}
\title{Idempotent characters and equivariantly \\ multiplicative splittings of K-theory}
\subjclass[2000]{19L47; 19A22, 20C15, 55P43, 55P60, 55P91, 55S91}
\keywords{Equivariant stable homotopy theory, Hill-Hopkins-Ravenel norm, equivariant commutative
ring spectrum, topological $K$-theory, representation ring, idempotent, multiplicative induction, Tambara
functor}
\begin{document}

\begin{abstract}
We classify the primitive idempotents of the $p$-local complex representation ring of a finite group $G$ in terms of the
cyclic subgroups of order prime to $p$ and show that they all come from idempotents of the Burnside ring. Our results hold without
adjoining roots of unity or inverting the order of $G$, thus extending classical structure theorems.
We then derive explicit group-theoretic obstructions for tensor induction to be compatible with the resulting
idempotent splitting of the representation ring Mackey functor.\\
Our main motivation is an application in homotopy theory: we conclude that the idempotent summands of
$G$-equivariant topological $K$-theory and the corresponding summands of the $G$-equivariant sphere spectrum
admit exactly the same flavors of equivariant commutative ring structures, made precise in terms of Hill-Hopkins-Ravenel norm maps.\\
\end{abstract}

\pagestyle{headings}
\maketitle
\thispagestyle{empty}
\nodetails{}{\tableofcontents}

\section{Introduction}
The purpose of this paper is twofold: We first classify the primitive idempotents in the real and complex
representation rings $\RO{G}$ and $\RU{G}$ of a finite group $G$ and their local variants, as summarized in 
§\ref{subsect:intro idpts}, extending various classical results. We then study the compatibility of tensor
induction with the splittings of $\RO{G}$ and $\RU{G}$ into idempotent summands, and as a consequence
obtain an explicit description of the $G$-equivariant commutative ring spectrum structures occuring as
idempotent summands of real and complex $G$-equivariant topological $K$-theory. See §\ref{subsect:intro mult}
for a summary of these results.

We begin with some motivation. Multiplicative induction is a familiar tool in representation theory and group
cohomology. In the wake of Hill, Hopkins and Ravenel's ground-breaking solution to the Kervaire invariant
one problem \cite{HHR}, it has also received much interest in equivariant homotopy theory.
Starting from the observation that localization can destroy some of the structure of an equivariant
commutative ring spectrum, Hill and Hopkins \cite{HH:EqvarMultClosure} gave a necessary and sufficient
criterion (cf.~Proposition~\ref{prop translation White HH}) for the localization
\[ \Loc{R}{x} := \hocolim \left( R \stackrel{x}{\longrightarrow} S^{-V} \wedge R
\stackrel{x}{\longrightarrow} S^{-(V \oplus V)} \wedge R \stackrel{x}{\longrightarrow} \ldots \right) \]
of a $G$-$E_\infty$ ring spectrum $R$ at an element $x \in \pi_V^G(R)$ to admit a $G$-$E_\infty$ ring
structure. The critical part is that $\Loc{R}{x}$ might not admit Hill-Hopkins-Ravenel \emph{norm maps}
\[ N_K^H \colon G_+ \wedge_H \bigwedge_{H/K} \Res^G_K(R) \to R \]
for all nested subgroups $K \leq H \leq G$. Subsequently, more general notions of equivariant commutative ring
spectra equipped with incomplete collections of norm maps, called $N_\infty$ \emph{ring spectra}, were studied
by Blumberg and Hill in \cite{BH:OperMult}, \cite{BH:ITF} and \cite{BH:modules}.

Interesting examples of equivariant localizations arise from primitive\footnote{An idempotent is
\emph{primitive} if it cannot be written as a sum of non-zero idempotents.} idempotent elements $e \in
\pi_0^G(R)$. These induce a decomposition of the homotopy Mackey functor $\underline{\pi}_*(R)$ into
indecomposable summands (also called \emph{blocks}) of the form
\[ e \cdot \underline{\pi}_*(R) \cong \Loc{\underline{\pi}_*(R)}{e} \]
and hence yield a block decomposition of $R$ as a wedge of $G$-spectra $\Loc{R}{e}$. One can now ask about the
possible $N_\infty$ ring structures on these blocks. Hill and Hopkins' aforementioned criterion involves checking
relations involving multiplicative induction in $\pi_0^G(R)$, which in general are hard to access.

\begin{problem} \label{problem}
Determine the nested subgroups $K \leq H \leq G$ such that
\begin{enumerate}[(1)]
  \item the norm map $N_K^H$ for $R$ descends to a well-defined norm map\footnote{Throughout the paper, we
  write $\tilde{N}$ for the norms of a localization to distinguish them from the norms of the original object.}
  \[ \tilde{N}_K^H \colon G_+ \wedge_H \bigwedge_{H/K} \Res^G_K(\Loc{R}{e}) \to \Loc{R}{e} \]
  on the block of $R$ defined by the primitive idempotent $e \in \pi_0^G(R)$
  \item the induced norm operation on homotopy groups $N_K^H \colon \pi_0^K(R) \to \pi_0^H(R)$ descends to a
  well-defined norm operation
  \[ \tilde{N}_K^H \colon \pi_0^K(\Loc{R}{e}) \to \pi_0^H(\Loc{R}{e}). \]
\end{enumerate}
\end{problem}

In the prequel \cite{boehme:mult-idempot}, the author gave an explicit group-theoretical
answer in the fundamental example of the $G$-equivariant sphere spectrum $\sphere$. It built on an
analysis of multiplicative induction in the \emph{Burnside ring} $\A{G}$ and Segal's identification
$\pi_0^G(\sphere) \cong \A{G}$ \cite{segal:ESHT}.

In the present paper, we present a complete solution to Problem~\ref{problem} for
$G$-\emph{equivariant complex topological} $K$-theory $\KUG$ and its real analogue $\KOG$. The
homotopy groups
\[ \pi_0^G(\KUG) \cong \RU{G}, \quad \pi_0^G(\KOG) \cong \RO{G} \]
identify with the \emph{complex} and \emph{real representation ring} $\RU{G}$ and $\RO{G}$, respectively, see
e.g.~\cite[§2]{segal:KU_G}.

\subsection{Primitive idempotents in representation rings} \label{subsect:intro idpts}
Dress' classification of primitive idempotents in the Burnside ring and its local variants
\cite{dress:solvable} was the starting point for the investigation of the idempotent splittings of $\A{G}$
and $\sphere$ in \cite{boehme:mult-idempot}.
Given a collection $\Primes$ of prime numbers, write $\AP{G} := \A{G} \otimes \ZP$ for the $\Primes$-local Burnside ring,
where $\ZP := \Z \left[ p^{-1} \, | \, p \notin \Primes \right]$. Dress showed that the primitive idempotent elements $e_L \in
\AP{G}$ are in canonical bijection with the conjugacy classes of $\Primes$-perfect subgroups $L \leq G$. See §~\ref{subsect:idpts
of A(G)} for further details.

It is known that the complex representation ring $\RU{G}$ has no idempotents other than zero or one, see
\cite[§11.4, Corollary]{serre:linear-reps}. We extend this result to a classification of the primitive
idempotents in the $\Primes$-local representation ring $\RUP{G} := \RU{G} \otimes \ZP$ as follows. Consider
the ``linearization'' map
\[ \lin \colon \AP{G} \to \RUP{G} \]
given by sending a finite $G$-set to its associated permutation representation.

\begin{thm} \label{intro thm idempotents RU}
The assignment $C \mapsto \lin(e_C)$ defines a bijection between the conjugacy classes of cyclic subgroups $C \leq G$ of
order not divisible by any prime in $\Primes$ and the primitive idempotent elements of the ring $\RUP{G}$. Here, $e_C \in \AP{G}$
denotes Dress' idempotent associated to $C$, see Theorem~\ref{Dress idempotents}.
\end{thm}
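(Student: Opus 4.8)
The plan is to use that $\lin$ is a ring homomorphism, so that it carries Dress' complete orthogonal family of primitive idempotents $\{e_L\}$ of $\AP{G}$ (indexed by conjugacy classes of $\Primes$-perfect subgroups $L$) to a complete orthogonal family $\{\lin(e_L)\}$ of idempotents of $\RUP{G}$. First I would identify these via characters: since the permutation representation of a $G$-set $X$ has character $g \mapsto |X^{\langle g \rangle}|$, we get $\chi_{\lin(x)}(g) = \varphi_{\langle g \rangle}(x)$ for every $x \in \AP{G}$, where $\varphi_H$ is the mark homomorphism, and combining this with the defining property of $e_L$ yields
\[ \chi_{\lin(e_L)}(g) = \begin{cases} 1 & \OP{\langle g \rangle} \text{ is } G\text{-conjugate to } L, \\ 0 & \text{otherwise.} \end{cases} \]
Because $\OP{\langle g \rangle}$ is always cyclic of order prime to $\Primes$, this shows $\lin(e_L) = 0$ unless $L$ is such a subgroup, whereas for cyclic $C$ of order prime to $\Primes$ (equivalently, $\Primes$-perfect cyclic) the character takes the value $1$ at any generator of $C$, so $\lin(e_C) \neq 0$; and distinct such $C$ give idempotents with disjoint character-supports, hence are distinct. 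Since the $\lin(e_L)$ sum to $\lin(1)=1$, the family $\{\lin(e_C)\}$ is already a complete orthogonal family of nonzero idempotents, so every primitive idempotent of $\RUP{G}$ occurs among them, and it remains only to prove that each $\lin(e_C)$ is primitive.

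For primitivity I would pass to $\RU{G} \otimes \Q$, into which $\RUP{G}$ injects. Recall (Artin's induction theorem) that $\RU{G}\otimes\Q$ is a product of number fields whose primitive idempotents $f_D$ are indexed by conjugacy classes of cyclic subgroups $D$, with $f_D$ the indicator function of $\{g : \langle g \rangle \text{ is } G\text{-conjugate to } D\}$ (this class function is $\Q$-valued and constant on $\Q$-conjugacy classes, hence a rational virtual character). Consequently every idempotent of $\RUP{G}$ is a subsum of the $f_D$, and comparing supports gives $\lin(e_C) = \sum_{(D) :\, \OP{D} \sim_G C} f_D$. Now suppose $e' = \sum_{(D) \in \cT} f_D$ is an idempotent of $\RUP{G}$ with $\cT$ a set of conjugacy classes of cyclic subgroups $D$ all satisfying $\OP{D} \sim_G C$; I want to show $\cT$ is empty or the full index set. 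The key move is to restrict to a single cyclic subgroup $D'$ chosen with $C \leq D'$ and $\OP{D'} = C$: since restriction commutes with $\lin$ and the general restriction formula for Dress idempotents collapses ($C$ is the unique subgroup of $D'$ of order $|C|$), we get $\Res^G_{D'}(\lin(e_C)) = \lin(e^{D'}_C)$, and $\Res^G_{D'}(e')$ is an idempotent of $\RUP{D'}$ whose character is supported inside that of $\lin(e^{D'}_C)$.

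The cyclic base case is then the computation of the primitive idempotents of $\RUP{D'} = \ZP[t]/(t^{|D'|}-1)$: factoring $t^m - 1 = \prod_{d \mid m} \Phi_d$ and using $\Phi_{\ell^a d_0} \equiv \Phi_{d_0}^{\phi(\ell^a)} \pmod \ell$ for $\ell \nmid d_0$ (equivalently, that resultants of distinct cyclotomic polynomials are prime powers or units), one sees that over $\ZP$ the factors $\Phi_d$ and $\Phi_{d'}$ stay in a common block precisely when $d$ and $d'$ have the same prime-to-$\Primes$ part; hence the primitive idempotents of $\RUP{D'}$ correspond to the $\Primes$-perfect subgroups of $D'$, and in particular $\lin(e^{D'}_C)$ is primitive. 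Therefore $\Res^G_{D'}(e') \in \{0,\, \lin(e^{D'}_C)\}$. Writing $D' = C \times Q$ with $Q$ the $\Primes$-part of $D'$ and using that $\Res^G_{D'}(f_D)$ is a sum of $f^{D'}_{D''}$ over subgroups $D'' \le D'$ with $D'' \sim_G D$, one reads off $\lin(e^{D'}_C) = \sum_{Q'' \le Q} f^{D'}_{C \times Q''}$; matching this with $\Res^G_{D'}(e')$ shows that $(C) \in \cT$ forces $\Res^G_{D'}(e') = \lin(e^{D'}_C)$ and hence $(D') \in \cT$, while $(C) \notin \cT$ forces $\Res^G_{D'}(e') = 0$ and hence $(D') \notin \cT$. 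Since every conjugacy class $(D)$ with $\OP{D} \sim_G C$ has a representative of the form of such a $D'$ (a $G$-conjugate containing $C$ with $\OP{D'} = C$), we conclude $(D) \in \cT \Leftrightarrow (C) \in \cT$ uniformly, so $\cT$ is empty or the full index set and $e' \in \{0, \lin(e_C)\}$. Together with the first paragraph this gives the claimed bijection.

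I expect the main obstacle to be exactly this primitivity statement: the character bookkeeping and the ring-homomorphism formalism are routine, but the cyclotomic-resultant computation pinning down the blocks of $\RUP{D'}$ for cyclic $D'$, and the restriction argument propagating primitivity from cyclic subgroups back to $G$, are where the real content lies; one must also be careful to justify the restriction formula for Dress' idempotents and the identification of the $f_D$ together with their supports through the mark/character formula rather than assume them. (Alternatively, the block structure of $\RUP{G}$ could be extracted from Segal's description of $\Spec \RU{G}$ by base change along $\Z \to \ZP$, but this amounts to the same computation.)
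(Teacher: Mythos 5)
Your proposal is correct, but it reaches the primitivity of the $\lin(e_C)$ by a genuinely different route than the paper. The first half (the character/mark identity $\chi(\lin(X))(g)=\phi^{\langle g\rangle}(X)$, vanishing of $\lin(e_L)$ for non-cyclic $L$, orthogonality and summing to one) coincides with the paper's §2.1; one small reordering: the claim that every primitive idempotent of $\RUP{G}$ occurs among the $\lin(e_C)$ is only available \emph{after} primitivity is proved (a complete orthogonal family need not contain the primitive idempotents), but since you then prove primitivity this is harmless. For primitivity the paper goes \emph{up} to the splitting field: it uses Atiyah's description of $\Spec(\RUP{G}\otimes\sint)$, shows the connected components are indexed by conjugacy classes of $\Primes$-prime elements, and then descends along the Galois action of $\Gamma=\mathrm{Gal}(\sfield/\Q)$ to identify the primitive idempotents of $\RUP{G}$ and match characters with $\lin(e_C)$. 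You instead go \emph{down} to cyclic subgroups: taking as input the classical rational block decomposition of $\RU{G}\otimes\Q$ with primitive idempotents $f_D$ indexed by cyclic subgroups (Solomon/Serre; this must be cited or proved, e.g.\ by Galois descent from $\RU{G}\otimes\sfield\cong\prod_{(g)}\sfield$), you reduce primitivity of $\lin(e_C)$ to cyclic overgroups $D'\geq C$ with $\OP{D'}=C$, and settle the cyclic case by analyzing $\ZP[t]/(t^{|D'|}-1)$ via the congruence $\Phi_{\ell^a d_0}\equiv\Phi_{d_0}^{\phi(\ell^a)}\bmod\ell$ for $\ell\in\Primes$, which glues the cyclotomic factors with equal prime-to-$\Primes$ part (the separating idempotents needed for the converse direction already come for free from the elements $\lin(e^{D'}_{C''})$). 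The propagation step, comparing $\Res^G_{D'}(e')$ with $\lin(e^{D'}_C)$ at a generator of $C$ and a generator of $D'$ to deduce $(D)\in\cT\Leftrightarrow(C)\in\cT$, is sound, as is the support containment it uses (for $g\in D'$ the element $\Pprime{g}$ generates a subgroup of $C$). What each approach buys: the paper's splitting-field argument also delivers the finer classification of idempotents of $\RUP{G}\otimes\sint$ (Corollary~\ref{idempotents of RUP(G)}) and the Galois-orbit/field-trace description exploited in later remarks, whereas your argument avoids adjoining roots of unity and Atiyah's spectrum computation altogether and gives a uniform, hands-on proof for arbitrary $\Primes$ --- something the paper's own ``quick proofs'' in \S\ref{subsect:quick proofs} achieve only in the rational and $p$-local cases.
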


Theorem~\ref{intro thm idempotents RU} is an instance of the phenomenon that one passes from the Burnside ring
to the representation ring by restricting attention to cyclic subgroups. The proof is given in
§\ref{section:idpt}.

\begin{rem}
Theorem~\ref{intro thm idempotents RU} extends classical work in the following way:
Building on work by Solomon \cite{solomon:burnside}, Gluck \cite{gluck:bound} studies the idempotents $\lin(e_C)$ and their
character values in the rational and the $p$-local case for a single prime $p$, but does not show that they are primitive.
He also observes that Dress' idempotent $e_L \in \AP{G}$ is in the kernel of the linearization map if $L$ is
not a cyclic group; we prove this in the general $\Primes$-local case in Corollary~\ref{cor characters of
burnside images}.
\end{rem}


We record an immediate consequence of Theorem~\ref{intro thm idempotents RU}. Write $\ROP{G}$ for the
$\Primes$-local real representation ring and $R\Q(G) \otimes \ZP$ for the ring of $\ZP$-linear combinations
of $G$-representations over the rational numbers. It is well-known that these embed into $\RUP{G}$ as subrings.

\begin{cor} \label{intro cor subrings}
The primitive idempotents of $\RUP{G}$ all lie in the subrings $\ROP{G}$ and $R\Q(G) \otimes \ZP$.
Hence, they are precisely the primitive idempotents of these subrings.
\end{cor}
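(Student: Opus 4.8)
The plan is to deduce this directly from Theorem~\ref{intro thm idempotents RU} together with the elementary fact that the linearization map factors through both subrings in question.

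First I would observe that $\lin\colon \AP{G} \to \RUP{G}$ factors through the canonical inclusions $\ROP{G} \hookrightarrow \RUP{G}$ and $R\Q(G)\otimes\ZP \hookrightarrow \RUP{G}$: the permutation representation attached to a finite $G$-set $G/H$ is defined over $\Z$, hence in particular over $\Q$ and over $\R$, and complexifies to $\lin(G/H)$. Since each Dress idempotent $e_C$ is a $\ZP$-linear combination of classes of finite $G$-sets, its image $\lin(e_C)$ therefore lies in (the image of) each of the two subrings. By Theorem~\ref{intro thm idempotents RU} the elements $\lin(e_C)$, with $C$ running over the conjugacy classes of cyclic subgroups of order prime to $\Primes$, are exactly the primitive idempotents of $\RUP{G}$, which proves the first assertion.

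For the second assertion, write $S$ for either subring. One inclusion is formal: if $e$ is a primitive idempotent of $\RUP{G}$ lying in $S$, then any decomposition of $e$ into two nonzero orthogonal idempotents of $S$ would also be such a decomposition in $\RUP{G}$, so $e$ stays primitive in $S$. For the reverse inclusion I would exploit that the idempotents $\lin(e_C)$ are finitely many, pairwise orthogonal, and sum to $1$ in $\RUP{G}$ — the last point holding because $\sum_L e_L = 1$ in $\AP{G}$ by Dress (Theorem~\ref{Dress idempotents}), while $\lin(e_L) = 0$ for non-cyclic $L$ by Corollary~\ref{cor characters of burnside images} and a cyclic group is $\Primes$-perfect precisely when its order is prime to $\Primes$. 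Thus $\RUP{G}$ decomposes as a finite product $\prod_C \RUP{G}\cdot\lin(e_C)$ of connected rings (as the $\lin(e_C)$ are primitive); since all the $\lin(e_C)$ belong to $S$, this restricts to a decomposition $S = \prod_C S\cdot\lin(e_C)$ in which each factor is a unital subring of a connected ring, hence itself connected. Therefore the $\lin(e_C)$ are precisely the primitive idempotents of $S$, and they coincide with those of $\RUP{G}$.

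I do not anticipate a genuine obstacle; the only point requiring a little care is the reverse inclusion above, namely ruling out that some $\lin(e_C)$ could split further inside the smaller ring $S$. This is exactly why the argument should run through the product decomposition into connected rings rather than through a bare orthogonality statement.
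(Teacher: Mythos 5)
Your proof is correct and takes the route the paper intends (the paper records this as an immediate consequence of Theorem~\ref{intro thm idempotents RU}): the $\lin(e_C)$ are $\ZP$-combinations of permutation representations, hence lie in $\ROP{G}$ and $R\Q(G)\otimes\ZP$, and since they are finitely many orthogonal primitive idempotents summing to one, the induced product decomposition of either subring into connected factors shows they are exactly its primitive idempotents. The only quibble is your closing remark, which misidentifies the delicate point: an idempotent can never split further inside a subring (that is precisely your ``formal'' inclusion), and the genuine content -- which your product-of-connected-rings argument does correctly supply -- is that the subring has no primitive idempotents other than the $\lin(e_C)$.
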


In the special case of $R\Q(G) \otimes \Q$, this result appeared as \cite[Thm.~3]{solomon:burnside}.



\subsection{Multiplicativity of idempotent summands} \label{subsect:intro mult}
We now turn to the multiplicative properties of the idempotent splittings of the complex and real
representation rings and equivariant $K$-theory spectra. Since the block
$ \Loc{\RUP{G}}{\lin(e_C)} $
agrees with the $\AP{G}$-module localization
\[ \Loc{\RUP{G}}{e_C} \cong \RUP{G} \otimes_{\AP{G}} \Loc{\AP{G}}{e_C}, \]
we obtain an identification
\[ \Loc{\KUGP}{\lin(e_C)} \simeq \KUGP \wedge \Loc{\sphere_{(\Primes)}}{e_C} \]
of the blocks of $\Primes$-local $G$-equivariant $K$-theory with an $e_C$-localization in genuine $G$-spectra. By
Corollary~\ref{intro cor subrings}, the same is true for $\ROP{G}$ and $\KOGP$.
This enables us to reduce the solution to Problem~\ref{problem} for equivariant $K$-theory to the one for the
sphere given in the prequel \cite{boehme:mult-idempot}. The resulting classification of the maximal
$N_\infty$ ring structures of the idempotent summands of $\KUGP$ can be summarized as follows:

\begin{thm} \label{intro summary thm}
Let $C \leq G$ be a cyclic group of order not divisible by any prime in $\Primes$ and let $e_C$ be the
corresponding primitive idempotent in $\AP{G}$.
Let $K \leq H \leq G$ be nested subgroups. Then the following are equivalent:
\begin{enumerate}[(a)]
  \item The $G$-spectrum $\Loc{\sphere_{(\Primes)}}{e_C}$ inherits a norm map $\tilde{N}_K^H$ from the norm map
  $N_K^H$ of $\sphere_{(\Primes)}$.
  \item The $G$-spectrum $\Loc{\KUGP}{e_C}$ inherits a norm map $\tilde{N}_K^H$ from
  that of $\KUGP$.
  \item The Mackey functor $\Loc{\AP{-}}{e_C}$ inherits a norm map $\tilde{N}_K^H$ from
  that of $\AP{-}$.
  \item The Mackey functor $\Loc{\RUP{-}}{e_C}$ inherits a norm map $\tilde{N}_K^H$ from
  that of $\RUP{-}$.
  \item Any subgroup $C' \leq H$ conjugate in $G$ to $C$ lies in $K$.
\end{enumerate}
All of the above holds with $\KUGP$ and $\RUP{-}$ replaced by their real variants $\KOGP$ and $\ROP{-}$.
\end{thm}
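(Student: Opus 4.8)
The strategy is to prove the cycle of equivalences by reducing everything to the combinatorial statement (e), which was already established for the sphere in the prequel, and then transport the information along the ring maps $\sphere_{(\Primes)} \to \KUGP$ and the Mackey-functor maps $\AP{-} \to \RUP{-}$. First I would recall from the prequel \cite{boehme:mult-idempot} that for the $G$-equivariant sphere the equivalence (a) $\Leftrightarrow$ (e) holds verbatim, and that (c) is the Burnside-ring shadow of (a), so (c) $\Leftrightarrow$ (e) as well; in fact the Hill--Hopkins criterion (Proposition~\ref{prop translation White HH}) says (a) and (c) are literally the same condition, since the obstruction lives in $\pi_0^G(\sphere_{(\Primes)}) \cong \AP{G}$. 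This pins down the left-hand column of the diagram of implications.

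Next I would establish (b) $\Leftrightarrow$ (a) and (d) $\Leftrightarrow$ (c) using the two module identifications already recorded in \S\ref{subsect:intro mult}, namely
\[ \Loc{\RUP{G}}{\lin(e_C)} \cong \RUP{G} \otimes_{\AP{G}} \Loc{\AP{G}}{e_C}, \qquad \Loc{\KUGP}{\lin(e_C)} \simeq \KUGP \wedge \Loc{\sphere_{(\Primes)}}{e_C}. \]
The second of these exhibits the $K$-theory block as the smash of $\KUGP$ (a genuine $G$-$E_\infty$ ring, hence equipped with all norms) with the sphere block; a norm map on the sphere block then induces one on the smash product by functoriality of $N_K^H$ on $G$-$E_\infty$ rings and their modules, giving (a) $\Rightarrow$ (b). For the converse (b) $\Rightarrow$ (a), I would apply the zeroth homotopy Mackey functor: a norm $\tilde N_K^H$ on $\Loc{\KUGP}{e_C}$ restricts to a norm on $\Loc{\RUP{-}}{e_C}$, i.e. gives (d), and then I reduce to showing (d) $\Rightarrow$ (c), after which (c) $\Rightarrow$ (a) is the sphere case again. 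The arithmetic reduction (d) $\Rightarrow$ (c) is the heart of the matter: one must see that the obstruction to descending $N_K^H$ across the idempotent splitting in $\RUP{-}$ is detected already in $\AP{-}$. This is where Corollary~\ref{intro cor subrings} and the results of \S\ref{section:idpt} enter: the primitive idempotents of $\RUP{G}$ are exactly the linearizations of Dress idempotents $e_C$ for cyclic $C$, and the Hill--Hopkins obstruction in $\RUP{H}$ is a comparison of $N_K^H(e_C)$ with $e_C$ after applying the relevant restrictions, which by naturality of $\lin$ and the multiplicativity of the norm is the image under $\lin$ of the corresponding obstruction in $\AP{H}$; since $\lin(e_C) \neq 0$ precisely for cyclic $C$ (Corollary~\ref{cor characters of burnside images}), the vanishing in $\RUP{H}$ forces the vanishing in $\AP{H}$.

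The main obstacle I anticipate is exactly this last implication (d) $\Rightarrow$ (c): one needs to argue that no cancellation occurs when passing from $\AP{H}$ to $\RUP{H}$ under $\lin$, i.e. that the linearization map does not accidentally kill the obstruction class even though it does kill many idempotents. The clean way to handle this is to compute on characters — pass to $\prod_{C'} \RUP{H} \cdot \lin(e_{C'})$, where each factor is a product of $p$-local cyclotomic-type rings, and observe that the norm-descent condition is a statement about a single such factor (the one indexed by the conjugacy class of $C$), on which $\lin$ restricted to the corresponding Burnside summand is injective. With this in hand, all of (a)--(e) are equivalent. Finally, for the real analogue I would note that by Corollary~\ref{intro cor subrings} the primitive idempotents of $\ROP{G}$ coincide with those of $\RUP{G}$, that $\lin$ factors through $\ROP{G}$, and that $\KOGP$ is likewise a genuine $G$-$E_\infty$ ring with $\Loc{\KOGP}{e_C} \simeq \KOGP \wedge \Loc{\sphere_{(\Primes)}}{e_C}$; the same four-step argument then applies with $\RUP{-}$, $\KUGP$ replaced by $\ROP{-}$, $\KOGP$ throughout, since the only input that changed — the identification of primitive idempotents — is supplied by Corollary~\ref{intro cor subrings}.
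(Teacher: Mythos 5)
Your proposal follows essentially the same route as the paper: the equivalence of (a), (c), (e) is quoted from the prequel; (c) $\Leftrightarrow$ (d) is reduced, via the Blumberg--Hill localization criterion for (incomplete) Tambara functors and the idempotent divisibility trick, to the fact that $\lin$ is a map of Tambara functors that is injective on the cyclic idempotent summand of $\AP{-}$ (the paper's Lemma~\ref{lemma kernel lin}, which your character argument reproves); and (b) is handled exactly as in Theorem~\ref{thm htpy}, using $\Loc{\KXGP}{e_C} \simeq \KXGP \wedge \Loc{\sphere_{(\Primes)}}{e_C}$ together with the preservation criterion for one direction and passage to $\pi_0$ Mackey functors (giving (d)) for the other. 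The only slight imprecision is your claim that the obstruction lives in a single factor indexed by $(C)$ — the restriction of $e_C$ to $H$ may split into several $H$-conjugacy classes — but injectivity of $\lin$ on the whole cyclic summand, which you also invoke, covers this.
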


The equivalence of (a), (c) and (e) was already proven in \cite{boehme:mult-idempot}.
Theorem~\ref{intro summary thm} is made more precise in Theorem~\ref{thm rep splitting norms}, Theorem~\ref{thm rep splitting ITF}
and Corollary~\ref{thm htpy} in terms of Blumberg and Hill's framework of incomplete Tambara functors and
$N_\infty$ operads.

\begin{rem}
If $H$ does not contain a group conjugate in $G$ to $C$, then the norms $\tilde{N}_K^H$ exist for trivial
reasons: It can be seen from Theorem~\ref{Dress idempotents} that the restriction of $e_C$ to $H$ vanishes,
and so $\Loc{\AP{H}}{e_C}$ and $\Loc{\RUP{H}}{e_C}$ must be zero. In other cases, these groups are always
non-zero.
\end{rem}

An immediate consequence of Theorem~\ref{intro summary thm} is the following:

\begin{cor} \label{intro cor principal block KU}
The summand $\Loc{\KUGp}{e_C}$ is a $G$-$E_\infty$ ring spectrum
if and only if $C \leq G$ is the trivial group. The same is true for real $K$-theory.
\end{cor}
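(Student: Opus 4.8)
The plan is to deduce this directly from Theorem~\ref{intro summary thm} together with the standard fact (cf.~Proposition~\ref{prop translation White HH}) that a $G$-spectrum admits a $G$-$E_\infty$ ring structure compatible with a given localization precisely when \emph{all} Hill-Hopkins-Ravenel norm maps $\tilde N_K^H$ exist for every nested pair $K \leq H \leq G$. Thus $\Loc{\KUGp}{e_C}$ is a $G$-$E_\infty$ ring spectrum if and only if condition (b), hence condition (e), of Theorem~\ref{intro summary thm} holds for \emph{all} $K \leq H \leq G$; and similarly for $\KOGp$ using the final sentence of that theorem.

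First I would check that if $C$ is the trivial group, condition (e) is automatic: any subgroup $C' \leq H$ conjugate in $G$ to the trivial group is itself trivial, and the trivial group is contained in every subgroup $K$, so $\tilde N_K^H$ exists for all nested pairs. Hence $\Loc{\KUGp}{e_C}$ (and its real analogue) inherits every norm map and is a $G$-$E_\infty$ ring spectrum. For the converse, suppose $C$ is nontrivial; then I would exhibit a single nested pair for which condition (e) fails, namely $H = C$ and $K = \{e\}$ the trivial subgroup. Here $C' = C \leq H$ is conjugate to $C$ but is not contained in $K = \{e\}$, so (e) fails, hence by Theorem~\ref{intro summary thm} the norm map $\tilde N_{\{e\}}^C$ does not exist on $\Loc{\KUGp}{e_C}$, and therefore this summand cannot carry a $G$-$E_\infty$ ring structure. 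The same argument applies verbatim to $\KOGp$.

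There is no real obstacle here; the corollary is a formal consequence once Theorem~\ref{intro summary thm} is in hand. The only point requiring a word of care is the translation between ``$G$-$E_\infty$ ring spectrum'' and ``all norm maps $\tilde N_K^H$ exist'', which is exactly the content of the Hill--Hopkins criterion recalled as Proposition~\ref{prop translation White HH}; one should note that a $G$-$E_\infty$ structure is the maximal $N_\infty$ structure, corresponding to the complete indexing system, so it exists iff every norm is present. With that in place, the proof is simply the observation that the complete indexing system is realized on the block iff the trivial group is, among all subgroups conjugate to $C$, always contained in the smaller subgroup of every nested pair, which forces $C = \{e\}$.
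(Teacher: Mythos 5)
Your proposal is correct and follows exactly the paper's route: the corollary is treated there as an immediate consequence of Theorem~\ref{intro summary thm} (equivalently, of the maximality statement in Theorem~\ref{thm htpy}), with the same two observations you make --- condition (e) holds for all $K \leq H$ when $C$ is trivial, and fails for the pair $\{e\} \leq C$ when $C$ is nontrivial. The only nuance worth keeping in mind is that ``$G$-$E_\infty$'' here means the complete $N_\infty$ structure inherited from $\KXGP$ along the localization, which is precisely how you phrase it via the Hill--Hopkins criterion, so nothing is missing.
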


\subsection{Organization}
In §\ref{section:idpt}, we recall Dress' work on idempotents in the Burnside ring and give a proof of
Theorem~\ref{intro thm idempotents RU}. The algebraic and homotopical parts of Theorem~\ref{intro summary thm}
are discussed in §\ref{section:alg} and §\ref{section:htpy}, respectively.

\subsection{Acknowledgements}
The present work was part of the author's PhD project at the University of Copenhagen;
a previous version of the article was included in his PhD thesis \cite{boehme:thesis}.
The author would like to thank his PhD advisor Jesper Grodal,
his PhD committee consisting of Andrew Blumberg, John Greenlees and Lars Hesselholt,
as well as Markus Hausmann, Joshua Hunt, Malte Leip, Riccardo Pengo, David Sprehn
and an anonymous referee
for many helpful discussions and suggestions.
This research was supported by the Danish National Research Foundation through the Centre
for Symmetry and Deformation (DNRF92).

\section{Idempotent elements in representation rings} \label{section:idpt}
The goal of this section is to prove Theorem~\ref{intro thm idempotents RU}.
In §\ref{subsect:idpts of A(G)}, we show how some parts of the theorem follow easily from the classification
of idempotents in the Burnside ring. The difficult part is to prove that the images of the Burnside ring
idempotents are indeed primitive. We recall Atiyah's description \cite{atiyah:characters} of the prime ideal
spectrum $\Spec(\RUP{G} \otimes \sint)$ in §\ref{subsect:atiyah}, where $\sint$ is obtained from $\Z$ by
adjoining sufficiently many roots of unity, classify the idempotents of $\RUP{G} \otimes \sint$ in
§\ref{subsect:idempotents over splitting field}, and deduce the primitivity part of Theorem~\ref{intro thm
idempotents RU} in §\ref{subsect:idpt RU}. In the rational and in the $p$-local case, it is possible to prove
the primitivity in an easier way, as we explain in §\ref{subsect:quick proofs}.


\subsection{Idempotents in the Burnside ring} \label{subsect:idpts of A(G)}
We recall Dress' classification of idempotents of $\AP{G}$~\cite{dress:solvable}
and prove parts of Theorem~\ref{intro thm idempotents RU}.

\begin{notation}
Recall that by the Chinese remainder theorem applied to $\langle g \rangle$,
each $g \in G$ can be written uniquely as a product $\Ppart{g} \cdot \Pprime{g}$ of powers of $g$,
where $\Ppart{g}$ is of order divisible only by primes in $\Primes$,
and $\Pprime{g}$ is of order prime to $\Primes$.
The elements $\Ppart{g}$ and $\Pprime{g}$ are called
the $\Primes$-\emph{part} and $\Primes$-\emph{prime part} of~$g$, respectively.
\end{notation}

\begin{definition}
The $\Primes$-\emph{residual subgroup} of a group $H$ is the unique minimal normal subgroup $\OP{H}$
such that the quotient is a solvable $\Primes$-group,
i.e.~a solvable group of order only divisible by primes in $\Primes$.
$H$ is called $\Primes$-\emph{perfect} if $\OP{H} = H$.
\end{definition}

\begin{lemma} \label{OP of cyclic group}
For cyclic groups, we have $\OP{g} = \langle \Pprime{g} \rangle$.
In particular, a cyclic group is $\Primes$-perfect
if and only if its order is not divisible by any element of $\Primes$.
\qed
\end{lemma}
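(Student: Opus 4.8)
The plan is to reduce the abstract definition of the $\Primes$-residual subgroup to the elementary combinatorics of subgroups of a cyclic group, using the two facts that a cyclic group of order $n$ has a \emph{unique} subgroup of each order dividing $n$ and that these subgroups are totally ordered by divisibility of their orders. Write $n = |\langle g \rangle|$ and factor $n = n_{\Primes}\cdot n_{\Primes^\perp}$ into the product of prime powers $p^{a}$ with $p \in \Primes$, respectively $p \notin \Primes$. By the primary decomposition recalled in the Notation above, $\Ppart{g}$ has order $n_{\Primes}$ and $\Pprime{g}$ has order $n_{\Primes^\perp}$; in particular $\langle \Pprime{g} \rangle$ is exactly the unique subgroup of $\langle g \rangle$ of order $n_{\Primes^\perp}$.

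First I would check that $\langle \Pprime{g} \rangle$ is admissible in the sense of the definition: it is normal since $\langle g \rangle$ is abelian, and the quotient $\langle g \rangle / \langle \Pprime{g} \rangle$ is cyclic of order $n_{\Primes}$, hence abelian, hence solvable, and of order divisible only by primes in $\Primes$ by construction — i.e.\ a solvable $\Primes$-group. This already gives $\OP{\langle g \rangle} \leq \langle \Pprime{g} \rangle$. For the reverse inclusion I would show that $\langle \Pprime{g} \rangle$ is contained in \emph{every} normal subgroup $N \trianglelefteq \langle g \rangle$ with $\langle g \rangle / N$ a $\Primes$-group: such an $N$ is the unique subgroup of some order $d \mid n$, and the condition that $|\langle g \rangle / N| = n/d$ be divisible only by primes in $\Primes$ forces $n_{\Primes^\perp} \mid d$, whence the unique subgroup of order $n_{\Primes^\perp}$, namely $\langle \Pprime{g} \rangle$, is contained in $N$. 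Thus $\langle \Pprime{g} \rangle$ is the minimal admissible subgroup, so $\OP{\langle g \rangle} = \langle \Pprime{g} \rangle$. The ``in particular'' clause is then immediate: $\langle g \rangle$ is $\Primes$-perfect iff $\langle \Pprime{g} \rangle = \langle g \rangle$ iff $n_{\Primes^\perp} = n$ iff $n_{\Primes} = 1$, i.e.\ iff no prime of $\Primes$ divides $|\langle g \rangle|$.

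I do not expect a genuine obstacle here; the only point requiring a little care is to match the abstract description of $\OP{-}$ with the concrete subgroup $\langle \Pprime{g} \rangle$ by verifying \emph{both} that it is admissible \emph{and} that it sits inside every admissible subgroup, rather than appealing to structural properties of the $\Primes$-residual subgroup that have not yet been established. Note that solvability of the quotient is automatic in this abelian setting, so the content of the argument is entirely carried by the divisibility constraint coming from the ``$\Primes$-group'' condition on the order of the quotient.
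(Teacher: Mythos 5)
Your argument is correct and complete: showing that $\langle \Pprime{g} \rangle$ has solvable $\Primes$-group quotient and is contained in every normal subgroup with $\Primes$-group quotient is exactly the elementary divisibility argument the paper has in mind, which is why it states the lemma with no written proof. Nothing is missing, and your observation that solvability is automatic in the cyclic case is the right reason the verification is this short.
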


\begin{definition}
For a subgroup $H \leq G$, the \emph{mark homomorphism} $\phi^H \colon \AP{G} \to \ZP$ is extended
additively from the assignment $X \mapsto |X^H|$ for finite $G$-sets $X$.
\end{definition}

\begin{thm}[\cite{dress:solvable}, Prop.~2] \label{Dress idempotents}
There is a canonical bijection between the conjugacy classes of $\Primes$-perfect subgroups $L \leq G$ and the set of primitive
idempotent elements of $\AP{G}$. It sends $L$ to the element $e_L \in \AP{G}$ whose marks $\phi^H(e_L)$ at a
subgroup $H \leq G$ are one if $\OP{H}$ and $L$ are conjugate in $G$, and zero otherwise.
\end{thm}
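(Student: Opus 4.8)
The plan is to reduce the classification to the rational and single‑prime cases via the mark homomorphism. Recall that the product of mark homomorphisms $\phi=(\phi^H)_H\colon \AP{G}\to \prod_{(H)}\ZP$, indexed by the conjugacy classes of subgroups $H\leq G$, is an injective ring homomorphism which becomes an isomorphism after inverting $|G|$; in particular it is a rational isomorphism. Since $\ZP$ is an integral domain, the only idempotents of the target ring are the characteristic functions $\epsilon_T$ of subsets $T$ of the set of conjugacy classes of subgroups. Hence the idempotents of $\AP{G}$ correspond bijectively to those $T$ for which $\epsilon_T$ lies in $\im(\phi)$ — call such $T$ \emph{realizable}. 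The realizable subsets form a Boolean subalgebra of the power set (the image of $\phi$ is a subring), so the primitive idempotents of $\AP{G}$ correspond exactly to its atoms, i.e.\ to the minimal non‑empty realizable subsets. Thus the theorem reduces to the claim that the minimal realizable subsets are precisely the \emph{$\Primes$-blocks} $[L]:=\{\,(H) : \OP{H}\text{ is }G\text{-conjugate to }L\,\}$, as $L$ ranges over conjugacy classes of $\Primes$-perfect subgroups, together with the bookkeeping that $\phi^H(e_{[L]})=1$ exactly when $\OP{H}$ is conjugate to $L$.

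To determine the realizable subsets I would argue one prime at a time. Since $\ZP=\bigcap_{p\in\Primes}\Zp$ inside $\Q$ and $\A{G}$ is a finitely generated free abelian group, one gets $\AP{G}=\bigcap_{p\in\Primes}\bigl(\A{G}\otimes\Zp\bigr)$ inside $\A{G}\otimes\Q$, so $T$ is realizable over $\ZP$ if and only if it is realizable over $\Zp$ for every $p\in\Primes$. For a single prime this is classical (it is also the case $\Primes=\{p\}$, where ``solvable $p$-group'' just means ``$p$-group'', and follows from tom Dieck's congruence description of $\im(\phi)$): $\epsilon_T$ lies in $\A{G}\otimes\Zp$ if and only if $T$ is a union of $p$-blocks $\{\,(H):O^{p}(H)\sim_G L\,\}$, with $L$ running over $p$-perfect subgroups. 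Consequently a subset is realizable over $\ZP$ exactly when it is, simultaneously for all $p\in\Primes$, a union of $p$-blocks.

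It remains to identify these simultaneously $p$-saturated subsets with the unions of $\Primes$-blocks, and this purely group‑theoretic step is where I expect the (modest) technical work to concentrate. One direction: each $p$-block partition refines the $\Primes$-block partition, because $\OP{O^{p}(H)}=\OP{H}$ for $p\in\Primes$ — indeed $H/\OP{O^{p}(H)}$ is an extension of the solvable $\Primes$-group $O^{p}(H)/\OP{O^{p}(H)}$ by the $p$-group $H/O^{p}(H)$, hence itself a solvable $\Primes$-group, giving $\OP{H}\subseteq\OP{O^{p}(H)}$, while the reverse inclusion is immediate. Conversely, a subset saturated for every $p$-block relation contains with each $(H)$ every class reachable from it by repeatedly passing to a class in the same $p$-block for some $p\in\Primes$; so it suffices to connect $(H)$ to $(\OP{H})$ by such a chain. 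Pulling back a composition series of the solvable $\Primes$-group $H/\OP{H}$ produces a chain $H=H_0\trianglerighteq H_1\trianglerighteq\cdots\trianglerighteq H_m=\OP{H}$ with each $H_i/H_{i+1}$ cyclic of prime order $p_i\in\Primes$, and a short normal‑subgroup computation shows $O^{p_i}(H_i)=O^{p_i}(H_{i+1})$, so consecutive terms lie in the same $p_i$-block. Combining the two directions, the realizable subsets are exactly the unions of $\Primes$-blocks, the atoms are the individual $[L]$, and these are indexed by conjugacy classes of $\Primes$-perfect subgroups because such $L$ are precisely the values of $H\mapsto\OP{H}$ (using $\OP{\OP{H}}=\OP{H}$, by the same extension argument). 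Setting $e_L:=\phi^{-1}(\epsilon_{[L]})$ and reading off its marks yields the stated formula, and orthogonality of the $e_L$ with $\sum_L e_L=1$ follows since the $[L]$ partition the set of conjugacy classes of subgroups. (If one prefers a self‑contained argument avoiding the single‑prime input, the same conclusion can be reached by verifying tom Dieck's congruences directly for $\epsilon_{[L]}$, which is slightly more computational but no deeper.)
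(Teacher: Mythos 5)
Your argument is essentially correct, but it is worth noting that the paper does not prove this statement at all: it is quoted from Dress, whose own proof (and the strategy the paper explicitly imitates in \S 2.2--2.3 for $\RUP{G}\otimes\sint$) runs through the prime ideal spectrum --- idempotents correspond to clopen subsets of $\Spec(\AP{G})$, and a zig-zag argument with the prime ideals $q(H,p)$ identifies the connected components with conjugacy classes of $\Primes$-perfect subgroups, all primes of $\Primes$ being treated simultaneously. Your route is genuinely different: you embed $\AP{G}$ via the mark homomorphism, observe that idempotents correspond to ``realizable'' mark patterns forming a Boolean algebra, reduce realizability over $\ZP=\bigcap_{p\in\Primes}\Zp$ to realizability over each $\Zp$ separately (valid since $\A{G}$ is finitely generated free), and then quote the single-prime classification. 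The genuinely new content in your reduction is the group-theoretic gluing, and it checks out: $\OP{O^{p}(H)}=\OP{H}$ for $p\in\Primes$ shows each $p$-block partition refines the $\Primes$-block partition, and the chain through a pulled-back composition series of $H/\OP{H}$ with $O^{p_i}(H_i)=O^{p_i}(H_{i+1})$ (both inclusions are easy index-$p_i$ arguments) shows the join of the $p$-block relations is exactly the $\Primes$-block relation; together with $\OP{\OP{H}}=\OP{H}$ this gives the indexing by $\Primes$-perfect subgroups and the stated marks. The trade-off is that the arithmetic heart of the theorem --- the characterization of which mark patterns are $p$-locally realizable, i.e.\ tom Dieck's congruences --- is outsourced to the classical $\Primes=\{p\}$ case rather than proved, so your argument is a reduction of Dress's theorem to its single-prime instance plus group theory, whereas Dress's spectrum argument is self-contained and uniform in $\Primes$; your parenthetical alternative (verifying the congruences for $\epsilon_{[L]}$) would also need the necessity direction of the congruences to rule out smaller realizable subsets, i.e.\ to get primitivity, so as stated it only gives existence of the idempotents $e_L$.
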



Write $\chi(V)(g)$ for the value of the character of $V \in \RUP{G}$ at the element $g \in G$.
%
The linearization map $\lin \colon \AP{G} \to \RUP{G}$ satisfies the following simple identity:

\begin{lemma} \label{lemma characters marks}
For $X \in \AP{G}$, we have $\chi(\lin(X))(g) = \phi^{\langle g \rangle}(X) \in \Z$. \qed
\end{lemma}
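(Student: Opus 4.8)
The plan is to reduce the identity to the case of an honest finite $G$-set and then to invoke the classical description of the character of a permutation representation. First I would note that both sides are additive in $X$: the mark homomorphism $\phi^{\langle g \rangle}$ is $\ZP$-linear by construction, the linearization map $\lin$ is additive (indeed a ring homomorphism), and $V \mapsto \chi(V)(g)$ is additive on $\RUP{G}$. Since $\AP{G} = \A{G} \otimes \ZP$ is spanned over $\ZP$ by the classes of finite $G$-sets, it therefore suffices to verify the identity when $X$ is an actual finite $G$-set, in which case the common value will visibly be a non-negative integer.

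For such an $X$, the representation $\lin(X)$ is by definition the permutation representation on the free $\C$-vector space with basis indexed by the elements of $X$, and $g$ acts by permuting this basis according to its action on $X$. The trace of a permutation matrix equals the number of basis vectors it fixes, so $\chi(\lin(X))(g) = \#\{x \in X : gx = x\} = |X^{\langle g \rangle}|$, which is exactly $\phi^{\langle g \rangle}(X)$ by the definition of the mark homomorphism. The general identity then follows by $\ZP$-linearity, and the integrality assertion holds on the spanning set of honest $G$-sets.

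I do not expect any genuine obstacle here: the mathematical content is the textbook fact that the character of a permutation representation counts fixed points, and the only mild point to get right is the reduction-by-additivity step, which is immediate once one recalls that both $\lin$ and $\phi^{\langle g \rangle}$ are extended linearly from the classes of finite $G$-sets.
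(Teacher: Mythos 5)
Your proof is correct and is exactly the argument the paper leaves implicit (the lemma is stated with no proof): reduce by $\ZP$-linearity to honest finite $G$-sets, where the character of a permutation representation counts fixed points, and note that the fixed points of $g$ coincide with the fixed points of $\langle g\rangle$. Your remark that integrality is really only checked on the spanning set of actual $G$-sets is a fair and careful reading of the statement.
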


\begin{cor} \label{cor characters of burnside images}
Let $L \leq G$ be a $\Primes$-perfect subgroup. Then the virtual representation $\lin(e_L)$ has character
values
\[ \chi(\lin(e_L))(g) = \phi^{\langle g \rangle}(e_L) = \begin{cases}
1 & \text{if } \langle \Pprime{g} \rangle \sim_G L \\
0 & \text{otherwise}.
\end{cases} \]
In particular, $\lin(e_L)$ is zero if $L$ is not cyclic. The elements $\lin(e_C)$ are mutually
orthogonal idempotents summing to one, where $C$ ranges over a set of representatives for the conjugacy classes of cyclic
$P$-perfect subgroups.
\end{cor}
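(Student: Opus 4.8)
The plan is to read off the whole statement from the three ingredients already assembled. First, the identity $\chi(\lin(e_L))(g) = \phi^{\langle g\rangle}(e_L)$ is nothing but Lemma~\ref{lemma characters marks} applied to the element $X = e_L \in \AP{G}$. To evaluate the right-hand side I would feed $H = \langle g\rangle$ into Theorem~\ref{Dress idempotents}, which says that $\phi^H(e_L)$ equals $1$ precisely when $\OP{H} \sim_G L$ and $0$ otherwise; since Lemma~\ref{OP of cyclic group} identifies $\OP{\langle g\rangle}$ with $\langle \Pprime{g}\rangle$, this yields exactly the displayed case distinction.

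For the vanishing of $\lin(e_L)$ when $L$ is not cyclic, I would argue that $\langle \Pprime{g}\rangle$ is cyclic for every $g \in G$, so it can never be $G$-conjugate to a non-cyclic $L$ (conjugate subgroups are isomorphic); hence every character value $\chi(\lin(e_L))(g)$ vanishes. To conclude $\lin(e_L) = 0$ from this I would invoke injectivity of the character map $\RUP{G} \to \{\text{class functions } G \to \C\}$: the irreducible characters of $G$ form a $\ZP$-basis of $\RUP{G}$ and are linearly independent over $\C$, and $\ZP \subseteq \Q \subseteq \C$, so a $\ZP$-combination of them that vanishes as a function must be zero. This is the one place where I would take care not to treat the passage from vanishing characters to a vanishing element as automatic, since we work over $\ZP$ rather than over a field; but it is entirely routine.

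Finally, for the orthogonal-idempotents-summing-to-one claim I would use that $\lin$ is a ring homomorphism (it sends disjoint unions of $G$-sets to direct sums, products to tensor products, and the one-point $G$-set to the trivial representation, hence $\one$ to $\one$). Applying $\lin$ to the decomposition $\sum_L e_L = 1$ of Theorem~\ref{Dress idempotents}, where $L$ runs over the conjugacy classes of $\Primes$-perfect subgroups, and deleting the terms with $L$ non-cyclic (which vanish by the previous step), leaves $\sum_C \lin(e_C) = 1$ with $C$ ranging over the conjugacy classes of cyclic $\Primes$-perfect subgroups. Idempotency and orthogonality pass through the ring homomorphism $\lin$, i.e.\ $\lin(e_C)^2 = \lin(e_C^2) = \lin(e_C)$ and $\lin(e_C)\lin(e_{C'}) = \lin(e_C e_{C'}) = \lin(0) = 0$ for $C \not\sim_G C'$; one may also note $\lin(e_C) \neq 0$ since its character takes the value $1$ at a generator of $C$. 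I do not anticipate any serious obstacle: the corollary is essentially a bookkeeping consequence of Dress' theorem together with Lemmas~\ref{OP of cyclic group} and~\ref{lemma characters marks}, the only point requiring a moment's thought being the injectivity of characters over $\ZP$ noted above. (This also recovers, and extends to the general $\Primes$-local setting, Gluck's observation that $e_L$ is killed by linearization for non-cyclic $L$.)
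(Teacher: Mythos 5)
Your proposal is correct and follows the same route as the paper, whose proof is simply the combination of Theorem~\ref{Dress idempotents}, Lemma~\ref{lemma characters marks} and Lemma~\ref{OP of cyclic group}; you have merely spelled out the routine bookkeeping (injectivity of the character map over $\ZP$ and the fact that $\lin$ is a ring homomorphism) that the paper leaves implicit.
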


\begin{proof}
The statement follows from Theorem~\ref{Dress idempotents}, Lemma~\ref{lemma characters marks},
and Lemma~\ref{OP of cyclic group}.
\end{proof}

This proves all the statements of Theorem~\ref{intro thm idempotents RU} except for the primitivity of the
idempotents $\lin(e_C)$. Note that the rational case ($\Primes = \emptyset$) of
Corollary~\ref{cor characters of burnside images} is stated in \cite[Theorem]{gluck:bound} and goes back to a
similar result by Solomon \cite[Thm.~3]{solomon:burnside}.


The following observation is not part of the proof of Theorem~\ref{intro thm idempotents RU}, but we record it
for later reference.

\begin{lemma} \label{lemma kernel lin}
The $\Primes$-local Burnside ring splits as
\[ \AP{G} \cong e_{cyc} \cdot \AP{G} \times e_{ker} \cdot \AP{G} \]
where $e_{cyc}$ (respectively $e_{ker}$) is defined to be the sum of all primitive idempotents $e_L$ with
$L$ cyclic (respectively non-cyclic). Moreover, the summand $e_{ker} \cdot \AP{G}$ is precisely the kernel of
the linearization map $\lin \colon \AP{G} \to \RXP{G}$.
\end{lemma}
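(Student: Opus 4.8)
The plan is to treat the ring decomposition and the identification of $\ker(\lin)$ separately. For the decomposition I would simply invoke Theorem~\ref{Dress idempotents}: the elements $e_L$, indexed by the conjugacy classes of $\Primes$-perfect subgroups $L\leq G$, form a complete orthogonal system of primitive idempotents of $\AP{G}$, with $\sum_L e_L=1$. Partitioning this index set into the cyclic and the non-cyclic $\Primes$-perfect subgroups and summing over each part produces orthogonal idempotents $e_{cyc}$ and $e_{ker}$ with $e_{cyc}+e_{ker}=1$, and any complementary pair of idempotents in a commutative ring splits it as the product of the corresponding principal ideals $e_{cyc}\cdot\AP{G}$ and $e_{ker}\cdot\AP{G}$. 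Since $\ker(\lin)$ is an ideal it respects this product decomposition, so the second assertion is equivalent to the conjunction of (i) $e_{ker}\cdot\AP{G}\subseteq\ker(\lin)$ and (ii) $\lin$ is injective on $e_{cyc}\cdot\AP{G}$.

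Statement (i) is immediate from Corollary~\ref{cor characters of burnside images}: there $\lin(e_L)=0$ for every non-cyclic $L$, so $\lin(e_{ker})=\sum_{L\text{ non-cyclic}}\lin(e_L)=0$, and since $\lin$ is a ring homomorphism $\lin(e_{ker}\cdot x)=\lin(e_{ker})\lin(x)=0$ for all $x\in\AP{G}$.

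For statement (ii) I would first use Corollary~\ref{cor characters of burnside images} again: the $\lin(e_C)$, with $C$ ranging over cyclic $\Primes$-perfect subgroups, are orthogonal idempotents summing to $\lin(e_{cyc})=1$, so $\RXP{G}=\prod_C\lin(e_C)\cdot\RXP{G}$ and $\lin$ carries $e_C\cdot\AP{G}$ into $\lin(e_C)\cdot\RXP{G}$; this reduces (ii) to the injectivity of each $\lin\colon e_C\cdot\AP{G}\to\lin(e_C)\cdot\RXP{G}$. I would then pass to marks and characters. The total mark homomorphism $\AP{G}\hookrightarrow\prod_{(H)}\ZP$ is injective, so by Theorem~\ref{Dress idempotents} an element of $e_C\cdot\AP{G}$ is determined by its marks $\phi^H$ at the subgroups $H$ with $\OP{H}\sim_G C$; note this family is closed under conjugation and under passage to subgroups, since $\OP{H'}\leq\OP{H}$ whenever $H'\leq H$. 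On the other side, a virtual representation is detected by its character, so by Lemma~\ref{lemma characters marks} an $x$ lies in $\ker(\lin)$ exactly when $\phi^D(x)=0$ for every cyclic $D\leq G$. Thus (ii) becomes the statement that within the block $e_C\cdot\AP{G}$ the marks at the cyclic members of that family already detect everything; I would attack this by downward induction on the index of $H$, using the congruences cutting out the image of the mark homomorphism to recover $\phi^H(x)$ from the (vanishing) marks at proper subgroups, and feeding in Theorem~\ref{intro thm idempotents RU} to ensure that $\lin(e_C)\cdot\RXP{G}$ is genuinely indecomposable and leaves no slack.

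The main obstacle is precisely statement (ii): the decomposition into idempotent blocks and the computation of $\ker(\lin)$ on the non-cyclic block are purely formal, but controlling $\lin$ on the cyclic block amounts to showing that the linearization map — which remembers only the marks at cyclic subgroups — loses no information there, and this is exactly where the classification of primitive idempotents of $\RXP{G}$ obtained earlier in this section (Theorem~\ref{intro thm idempotents RU}) must be brought to bear.
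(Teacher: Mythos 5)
Your reduction of the second assertion to (i) $e_{ker}\cdot\AP{G}\subseteq\ker(\lin)$ and (ii) injectivity of $\lin$ on $e_{cyc}\cdot\AP{G}$ is the right bookkeeping, and your treatment of the splitting and of (i) agrees with the paper's proof (both follow from Theorem~\ref{Dress idempotents}, Lemma~\ref{lemma characters marks} and Corollary~\ref{cor characters of burnside images}). The gap is (ii): you only outline a plan (downward induction over subgroups, recovering $\phi^H(x)$ from the congruences cutting out the image of the mark homomorphism), and this plan cannot be carried out, because (ii) -- which is equivalent to the lemma's second claim -- fails in general. Take $G=Q_8$ and any $\Primes$ containing $2$. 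Every subgroup of $Q_8$ is a $2$-group, so the trivial subgroup is the only $\Primes$-perfect subgroup; hence $e_{cyc}=1$, $e_{ker}=0$, and (ii) would assert that $\lin\colon\AP{Q_8}\to\RXP{Q_8}$ is injective. But, writing $Z=\{\pm 1\}$ for the center and $1,\chi_i,\chi_j,\chi_k$ for the one-dimensional characters, the element
\[ u \;=\; [Q_8/Z] \,-\, [Q_8/\langle i\rangle] \,-\, [Q_8/\langle j\rangle] \,-\, [Q_8/\langle k\rangle] \,+\, 2\,[Q_8/Q_8] \]
is nonzero in $\AP{Q_8}$, while $\lin([Q_8/Z])=1+\chi_i+\chi_j+\chi_k$ and $\lin([Q_8/\langle i\rangle])=1+\chi_i$, etc., so $\lin(u)=0$. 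Equivalently, all marks of $u$ at cyclic subgroups vanish but $\phi^{Q_8}(u)=2$. This is exactly the slack your induction cannot remove: the congruences pin down the mark at a non-cyclic $H$ only modulo the order of its Weyl group, and marks at non-cyclic $H$ with $\OP{H}$ cyclic (here $H=Q_8$, $\OP{Q_8}=1$) are invisible to $\lin$ yet not annihilated by $e_{ker}$. (Your parenthetical claim that the family $\{H \mid \OP{H}\sim_G C\}$ is closed under passage to subgroups is also incorrect, for the same reason.)

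You should be aware that this is not only a defect of your attempt: the paper's own proof disposes of this direction in one sentence (``by Corollary~\ref{cor characters of burnside images}, these are precisely the elements of $e_{ker}\cdot\AP{G}$''), which in fact only yields the inclusion $e_{ker}\cdot\AP{G}\subseteq\ker(\lin)$; the reverse inclusion is false by the example above. What is true, and is all that is needed where the lemma is invoked (in the proof of Theorem~\ref{thm rep splitting norms}, where it is applied to the idempotents $R^G_H(e_C)$ and $N^H_KR^G_K(e_C)\cdot R^G_H(e_C)$), is the idempotent version: an idempotent of $\AP{G}$ lies in $\ker(\lin)$ if and only if it is a sum of primitive idempotents $e_L$ with $L$ non-cyclic. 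This does follow from Corollary~\ref{cor characters of burnside images}, since every idempotent of $\AP{G}$ is a sum of distinct $e_L$'s and the elements $\lin(e_C)$ with $C$ cyclic are nonzero with pairwise disjoint character supports, so no sum involving a cyclic $e_C$ can map to zero. If you want a correct statement in the spirit of the lemma, prove this idempotent version, or describe $\ker(\lin)$ as the ideal of elements whose marks vanish at all cyclic subgroups, which contains but in general strictly exceeds $e_{ker}\cdot\AP{G}$.
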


\begin{proof}
The first part follows from Theorem~\ref{Dress idempotents} by writing $1 = e_{cyc} + e_{ker}$.
Lemma~\ref{lemma characters marks} implies that the kernel of $\lin$ consists of those virtual $G$-sets whose
marks vanish at all cyclic subgroups. By Corollary~\ref{cor characters of burnside images}, these are
precisely the elements of the ideal $e_{ker} \cdot \AP{G}$.
\end{proof}

\subsection{Prime ideals in the splitting field case} \label{subsect:atiyah}
Let $\exp(G)$ be the exponent\footnote{The \emph{exponent} of a finite group is the least common multiple of
the orders of all group elements.} of $G$ and write $\sfield$ for the $\exp(G)$-th cyclotomic extension of
$\Q$ with ring of integers $\sint$ and Galois group $\Gamma := Gal(\sfield \colon \Q)$. All characters of $G$-representations over
the complex numbers take values in $\sint$, and therefore can be viewed as class functions $\CC{G} \to \sint$, where $\CC{G}$ is the
set of conjugacy classes of $G$.
When working $\Primes$-locally, the elements of $\RUP{G}$ are $\ZP$-linear combinations of
irreducible representations of $G$ over the complex numbers, hence their characters take values in $\sintP :=
\sint \otimes \ZP$.

\begin{notation} \label{notation character}
Any element $V \in \RUP{G} \otimes \sint$ can be written as an $\sintP$-linear combination $V = \sum_i
\lambda_i \cdot V_i$ of irreducible $G$-representations $V_i$. We write
\[ \hat{\chi}(V)(g) := \sum_i \lambda_i \cdot \chi(V_i)(g) \]
for the value of the $\sintP$-linear character of $V$ at $g \in G$. 
\end{notation}

Characters are multiplicative in tensor products of representations,
hence $\hat{\chi}(-)(g)$ defines a ring homomorphism $\RUP{G} \otimes \sint \to \sintP$.
Atiyah \cite{atiyah:characters} described the structure of the prime ideal spectrum $\Spec(\RU{G} \otimes \sint)$
in terms of these maps.
His proof applies without changes to the open subscheme $\Spec(\RUP{G} \otimes \sint)$
cut out by $\Primes$-localization.

\begin{prop}[Cf.~\cite{atiyah:characters}, Prop.~6.4] \label{prime ideals RUP_G}
The topological space $\Spec(\RUP{G} \otimes \sint)$ can be described as follows:
\begin{enumerate}[(1)]
  \item Every prime ideal of $\RUP{G} \otimes \sint$ is of the form
  \[ \Qpg{\fp}{g} := (\hat{\chi}(-)(g))^{-1}(\fp)
  = \{ V \in \RUP{G} \otimes \sint \, | \, \hat{\chi}(V)(g) \in \fp \} \]
  %
  for some element $g \in G$ and some prime ideal $\fp \trianglelefteq \sintP$.
  \item Let $\fp, \fq \trianglelefteq \sintP$ be prime ideals such that $\Z \cap \fq = q\Z$ for a prime $q \in \Z$.
  There is an inclusion $\Qpg{\fp}{g} \subseteq \Qpg{\fq}{h}$
  if and only if $\fp$ is contained in $\fq$
  and $\qprime{g}$ is conjugate in $G$ \linebreak to $\qprime{h}$.
  \item The prime ideals $\Qpg{\fp}{g}$ with $\fp = (0)$ are minimal and the ones with $\fp \neq (0)$ are maximal. In particular,
  the Krull dimension of $\RUP{G} \otimes \sint$ is one.
\end{enumerate}
\end{prop}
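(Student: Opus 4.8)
The plan is to present $R := \RUP{G}\otimes\sint$ as a module-finite algebra over the Dedekind domain $\sintP$ and to read off $\Spec R$ from the character homomorphisms, following Atiyah. Note that $R = \RU{G}\otimes_\Z\sintP$ is a free $\sintP$-module of rank $n := |\CC{G}|$, with the irreducible characters as a basis, so $\Spec R\to\Spec\sintP$ is finite and $\dim R\leq 1$. The ring maps $\hat\chi(-)(g)\colon R\to\sintP$, one per conjugacy class, assemble into $\hat\chi\colon R\to S:=\prod_{[g]}\sintP$, and $\hat\chi$ is injective because $\hat\chi\otimes_\Z\Q$ is the classical splitting-field isomorphism $\RU{G}\otimes\sfield\xrightarrow{\sim}\prod_{[g]}\sfield$ and $R$ is $\Z$-torsion-free; since $S$ too is $\sintP$-free of rank $n$, the quotient $S/R$ has finite length and $R\hookrightarrow S$ is module-finite. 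A second key observation: each $\hat\chi(-)(g)$ is \emph{surjective}, since it restricts to the identity on the copy $\sintP\cdot\one_G\subseteq R$, so the composite $R\to\sintP\twoheadrightarrow\sintP/\fp$ is surjective with kernel $\Qpg{\fp}{g}$, whence $R/\Qpg{\fp}{g}\cong\sintP/\fp$.

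For part (1): since $R\hookrightarrow S$ is an injective integral extension, Lying Over gives $\Spec S\twoheadrightarrow\Spec R$; a prime of the product ring $S$ is a pair $([g],\fp)$, and its contraction along $\hat\chi$ is exactly $\Qpg{\fp}{g}$, which proves the claim (and $\Z\cap\fp$ is automatically $(0)$ or $q\Z$ with $q\in\Primes$, the other primes being inverted in $\sintP$). For part (3): by the surjectivity observation, $\Qpg{\fp}{g}$ is maximal when $\fp$ is, while $R/\Qpg{(0)}{g}\cong\sintP$ is one-dimensional; moreover $R$ is $\sintP$-flat, so going-down forces every minimal prime of $R$ to lie over $(0)\in\Spec\sintP$, and the primes over $(0)$ are precisely those of $R\otimes_{\sintP}\sfield\cong\prod_{[g]}\sfield$, namely the pairwise incomparable $\Qpg{(0)}{g}$; hence these are exactly the minimal primes, and as $\Qpg{(0)}{g}\subsetneq\Qpg{\fp}{g}$ for maximal $\fp$ we get $\dim R=1$.

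For part (2) the key input is the congruence
\[ \hat\chi(V)(g)\equiv\hat\chi(V)(\qprime{g})\pmod{\fq}\qquad(V\in R,\ \Z\cap\fq=q\Z), \]
proved by writing $g$ as the product of its commuting $q$-part and $q$-prime part, simultaneously diagonalizing a representation, and using that every $q$-power root of unity is $\equiv 1\pmod{\fq}$; the ``if'' direction follows at once. For ``only if'', contracting $\Qpg{\fp}{g}\subseteq\Qpg{\fq}{h}$ along $\sintP\cdot\one_G\hookrightarrow R$ yields $\fp\subseteq\fq$; and since $\Qpg{(0)}{g}\subseteq\Qpg{\fp}{g}\subseteq\Qpg{\fq}{h}$, while the maximal primes containing $\Qpg{(0)}{g}$ are exactly the $\Qpg{\fr}{g}$ with $\fr$ maximal (because $R/\Qpg{(0)}{g}\cong\sintP$), we get $\Qpg{\fq}{h}=\Qpg{\fq}{g}$. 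Replacing $g,h$ by the $q$-regular elements $\qprime{g},\qprime{h}$ via the congruence, and using that a surjective ring map onto the field $\sintP/\fq$ is determined by its kernel together with its restriction to $\sintP\cdot\one_G$, this forces $\chi(\qprime{g})\equiv\chi(\qprime{h})\pmod{\fq}$ for every character $\chi$ of $G$; it remains to invoke the classical fact that $q$-regular elements with equal character values modulo $\fq$ are conjugate in $G$.

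That last fact is the main obstacle: everything else is formal commutative algebra over the Dedekind base $\sintP$ plus the character--splitting-field isomorphism, but the implication ``$\chi(a)\equiv\chi(b)\pmod{\fq}$ for all $\chi$ $\Rightarrow$ $a\sim_G b$'' (for $q$-regular $a,b$) genuinely rests on modular representation theory --- concretely on the nonsingularity modulo $q$ of the table of ordinary characters restricted to $q$-regular classes, equivalently the orthogonality relations for Brauer characters --- and is needed precisely because we do not invert $q$ (when $q\nmid|G|$ a soft argument with the central idempotents of $R\otimes\sfield$ would suffice). Finally, as noted in the excerpt, none of these steps interacts with the choice of $\Primes$: the Dedekind property of $\sintP$, flatness, finiteness, the splitting-field isomorphism, and the congruence all hold verbatim, so Atiyah's argument carries over with $\sint$ replaced by $\sintP$.
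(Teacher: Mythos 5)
Your proposal is correct and is essentially Atiyah's original argument transported to the $\Primes$-local setting, which is exactly the route the paper takes: it simply cites \cite{atiyah:characters}, Prop.~6.4 and observes that the proof applies unchanged to the open subscheme cut out by $\Primes$-localization. The one external input you invoke (non-conjugate $q$-regular elements are separated by ordinary character values modulo $\fq$) is the same modular-representation-theoretic fact underlying Atiyah's own proof, so leaving it as a citation is consistent with the level of detail the paper itself provides.
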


\subsection{Idempotents in the splitting field case} \label{subsect:idempotents over splitting field}

We can deduce a classification of the idempotent elements of $\RUP{G} \otimes \sint$ from Proposition~\ref{prime
ideals RUP_G}.
Our proof is inspired by
Dress' approach \cite[Prop.~2]{dress:solvable} to the idempotents in the Burnside ring.

\begin{thm} \label{components of RUP(G)}
The map
\[ G \to \pi_0(\Spec(\RUP{G} \otimes \sint)) \]
that sends $x \in G$ to the connected component of $\Qpg{0}{x}$ induces a bijection between
the set of conjugacy classes of $\Primes$-prime elements\footnote{A $\Primes$-\emph{prime} element is an
element $x$ such that $\Pprime{x} = x$.} of $G$ and the set of connected components of $\Spec(\RUP{G} \otimes
\sint)$.
In particular, the prime ideal spectrum of $\RU{G} \otimes \sint$ is connected.
\end{thm}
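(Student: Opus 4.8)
The plan is to reduce everything to the concrete description of $\Spec(\RUP{G} \otimes \sint)$ given in Proposition~\ref{prime ideals RUP_G}, and in particular to understand when two points lie in the same connected component. First I would record the following observation: since $\RUP{G} \otimes \sint$ is noetherian of Krull dimension one (part (3) of the Proposition), its connected components are in bijection with the equivalence classes of points under the relation generated by specialization. Concretely, two minimal primes $\Qpg{0}{x}$ and $\Qpg{0}{y}$ lie in the same component if and only if there is a finite chain of minimal primes, consecutive members of which are both contained in a common maximal prime $\Qpg{\fq}{h}$ (with $\fq \neq 0$). So the first step is to translate "same connected component" into a combinatorial statement about the closure relation, using part (2) of Proposition~\ref{prime ideals RUP_G}.

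Next I would analyze this closure relation. By part (2), $\Qpg{0}{x} \subseteq \Qpg{\fq}{h}$ holds (with $\Z \cap \fq = q\Z$) precisely when $\qprime{x}$ is $G$-conjugate to $\qprime{h}$; replacing $h$ by a conjugate we may as well take $x$ with $\qprime{x} = \qprime{h}$. The key point is that by the Chinese remainder / $\Primes$-prime-part decomposition, if $x$ is already $\Primes$-prime then $\qprime{x}$ depends only on the prime $q$: writing the order of $x$ in terms of its prime factorization, passing to the $q$-prime part just deletes the $q$-Sylow piece of $\langle x \rangle$. This means that two $\Primes$-prime elements $x, y$ with $\qprime{x} \sim_G \qprime{y}$ become identified in $\pi_0$ after passing through a prime over $q$; and one can connect any $\Primes$-prime element $x$ to $\Pprime{x}$ itself by "peeling off" one prime of $\Primes^\perp$ at a time — wait, more carefully: one connects an arbitrary $x$ to its $\Primes$-prime part $\Pprime{x}$ by a chain indexed by the primes dividing the order of $\Ppart{x}$... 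I should be careful here, since $\Ppart{x}$ involves primes \emph{in} $\Primes$, and $\Primes$-localization has already made the arithmetic over those primes degenerate. The clean statement to aim for is: $\Qpg{0}{x}$ and $\Qpg{0}{\Pprime{x}}$ always lie in the same component, and conversely $\Qpg{0}{x}$, $\Qpg{0}{y}$ lie in the same component if and only if $\Pprime{x} \sim_G \Pprime{y}$.

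For the first direction I would connect $x$ to $\Pprime{x}$ by choosing, for each prime $p \in \Primes$ dividing $|x|$, a maximal prime $\fq$ over $p$ in $\sintP$ — here one must check such a prime exists, i.e.\ that $p$ is not inverted; but $p \in \Primes$ precisely means $p$ is \emph{not} inverted in $\ZP$, so $p\sintP \neq \sintP$ and such $\fq$ exists — and observe that $\Qpg{0}{x}$ and $\Qpg{0}{x^{p}}$ (or more precisely the element obtained by deleting the relevant $p$-part) are both below $\Qpg{\fq}{x}$, since passing to the $p$-prime part of either changes nothing. Iterating over all primes in $\Primes$ dividing $|x|$ connects $x$ to $\Pprime{x}$. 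For the converse direction — that distinct $G$-conjugacy classes of $\Primes$-prime elements give distinct components — I would argue that the function $x \mapsto [\Pprime{x}]_G$ (conjugacy class of the $\Primes$-prime part) is \emph{locally constant} on $\Spec(\RUP{G} \otimes \sint)$: by part (2), whenever $\Qpg{\fp}{x} \subseteq \Qpg{\fq}{h}$ we have $\qprime{x} \sim_G \qprime{h}$ where $q$ is the residue characteristic of $\fq$; and since $q \notin \Primes$ (because... hmm, actually $q$ could a priori be in $\Primes$) — one needs that $q \notin \Primes$, which holds because a prime $q$ with $q\sintP \neq \sintP$ is exactly one not inverted, i.e.\ $q \in \Primes$... so this is the opposite of what I want. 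Let me reconsider: the point must instead be that for $\Primes$-prime elements $x$, $\Pprime{x} = x$, and one shows the invariant $x \mapsto$ ($G$-conjugacy class of $x$) restricted to $\Primes$-prime elements is separated by the components, using that specializations $\Qpg{0}{x} \subset \Qpg{\fq}{h}$ only arise when $\fq$ sits over a prime $q \in \Primes$ (the only primes with nonzero residue), and then $\qprime{x} = \qprime{h}$ forces the $\Primes$-prime parts of $x$ and $h$ to agree up to conjugacy. This gives a well-defined, surjective, injective map on $\pi_0$.

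\textbf{Main obstacle.} The routine part is unwinding Proposition~\ref{prime ideals RUP_G}; the delicate part is the bookkeeping with the two sets of primes — those in $\Primes$ (not inverted, hence contributing nonzero primes $\fq$ and genuine specializations) versus those in $\Primes^\perp$ (inverted, hence invisible) — and making sure the chain-of-specializations argument both \emph{connects} $x$ to $\Pprime{x}$ and \emph{does not} connect elements with non-conjugate $\Primes$-prime parts. Getting the direction of these implications right, and confirming that for $p \in \Primes$ there really is a prime of $\sintP$ over $p$ (so that the connecting maximal ideals exist), is where the real care is needed; once that is pinned down, connectedness of $\Spec(\RU{G}\otimes\sint)$ follows by taking $\Primes = \emptyset$, in which case every element equals its $\Primes$-prime part and there is a single conjugacy class... no: for $\Primes = \emptyset$ the $\Primes$-prime part of $x$ is $x$ itself, so this would give components indexed by \emph{all} conjugacy classes, contradicting connectedness — so in fact for $\Primes = \emptyset$ one must use that there are \emph{no} non-inverted primes is false too ($\Z_{(\emptyset)} = \Q$ inverts everything). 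The resolution: when $\Primes = \emptyset$, $\sint \otimes \Q$ and there are no nonzero primes at all, so every $\Primes$-prime element... is just any element, but then the \emph{connecting} argument degenerates and one instead notes $\Spec$ has a unique component because $1$ and the class-function idempotents are all rational multiples — i.e.\ the $\Primes = \emptyset$ statement "$\Pprime{x}$ represents the component of $x$" must be read with the convention that for $\Primes=\emptyset$ all elements are $\Primes$-prime \emph{and} simultaneously the specialization relation collapses everything; I would handle this boundary case by a separate short remark rather than trying to force it through the general chain argument.
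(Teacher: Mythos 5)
The core of your argument is the paper's own proof: you reduce to the specialization picture of Proposition~\ref{prime ideals RUP_G}, connect $\Qpg{(0)}{x}$ to $\Qpg{(0)}{\Pprime{x}}$ by peeling off one prime $p \in \Primes$ at a time through a maximal ideal $\fq$ of $\sintP$ lying over $p$ (the paper invokes going-up where you invoke non-invertibility of $p$ in $\ZP$ -- both fine), and you separate components by the invariant ``$G$-conjugacy class of $\Pprime{x}$'', which is constant along specializations because every non-zero prime of $\sintP$ has residue characteristic $q \in \Primes$ and deleting the $q$-part for $q \in \Primes$ does not change the $\Primes$-prime part. After your mid-course hesitation about which primes survive localization is resolved (correctly), this part is sound.

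The genuine error is in your final paragraph, where you try to deduce the ``in particular'' clause. Since $\ZP = \Z\lbrack p^{-1} \mid p \notin \Primes \rbrack$, the unlocalized ring $\RU{G} \otimes \sint$ is the case where $\Primes$ is the set of \emph{all} primes (equivalently, all primes dividing $|G|$), not $\Primes = \emptyset$. With $\Primes$ all primes, the only $\Primes$-prime element is the identity, so the theorem yields exactly one component, i.e.\ connectedness -- no boundary-case analysis is needed. Your attempted repair for $\Primes = \emptyset$ is false as stated: in that case $\sintP = \sfield$ and $\RUP{G} \otimes \sint \cong \RU{G} \otimes \sfield$ is the ring of $\sfield$-valued class functions, whose spectrum has one component for each conjugacy class of $G$; there is no contradiction with the theorem, because then every element of $G$ is $\Primes$-prime, but there is certainly no connectedness. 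So delete the proposed ``separate short remark'' claiming a unique component when $\Primes = \emptyset$, and instead read off the integral statement from the correct specialization $\Primes = \{\text{all primes}\}$.
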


This follows directly from:

\begin{prop}
For any (not necessarily $\Primes$-prime) elements $x, y \in G$, the prime ideals $\Qpg{\fp}{x}$ and
$\Qpg{\fq}{y}$ lie in the same connected component of $\Spec(\RU{G} \otimes \sint)$ if and only if
$\Pprime{x}$ and $\Pprime{y}$ are conjugate in $G$.
\end{prop}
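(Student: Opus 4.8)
The plan is to prove the proposition in two directions, using the description of $\Spec(\RUP{G} \otimes \sint)$ from Proposition~\ref{prime ideals RUP_G} together with basic commutative algebra relating connected components to idempotents and to chains of prime ideals.

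\textbf{Setup.} Recall that two points of $\Spec(A)$ lie in the same connected component if and only if they are not separated by an idempotent, and that for a Noetherian ring connected components are both open and closed. By Proposition~\ref{prime ideals RUP_G}(3), $\RUP{G} \otimes \sint$ has Krull dimension one, with minimal primes exactly the $\Qpg{0}{g}$ and maximal primes exactly the $\Qpg{\fp}{g}$ with $\fp \neq (0)$. Since every prime contains a minimal prime, and every generic point $\Qpg{0}{g}$ specializes to closed points, each connected component is a union of minimal primes and the closed points lying over them; hence it suffices to understand when two \emph{minimal} primes $\Qpg{0}{x}$ and $\Qpg{0}{y}$ lie in the same component, and more precisely to analyze the ``connectedness graph'' whose vertices are the primes $\Qpg{\fp}{g}$ and whose edges are inclusions of primes (equivalently, specializations). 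A standard fact is that two primes lie in the same connected component of a Noetherian spectrum if and only if they are linked by a finite zig-zag of such inclusions.

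\textbf{Reduction to a combinatorial statement.} By Proposition~\ref{prime ideals RUP_G}(2), there is an inclusion $\Qpg{\fp}{g} \subseteq \Qpg{\fq}{h}$ (with $\fq$ lying over $q\Z$) exactly when $\fp \subseteq \fq$ and $\qprime{g} \sim_G \qprime{h}$. In particular, $\Qpg{0}{g}$ and $\Qpg{0}{h}$ lie below a common maximal prime $\Qpg{\fq}{k}$ (with $\fq$ over $q$) if and only if $\qprime{g} \sim_G k_{q^\perp} \sim_G \qprime{h}$, which forces $\qprime{g} \sim_G \qprime{h}$. So two generic points are \emph{directly} linked through a closed point precisely when their $q$-prime parts agree for the relevant $q$. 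The content of the proposition is that chaining these elementary links together yields exactly the equivalence relation ``$\Pprime{x} \sim_G \Pprime{y}$''. For the \emph{if} direction, I would first reduce to the case $x, y$ $\Primes$-prime (replacing $x$ by $\Pprime{x}$ and using that $\Qpg{0}{x}$ and $\Qpg{0}{\Pprime{x}}$ are connected: indeed, writing $x = \Ppart{x}\Pprime{x}$ with $\Ppart{x}$ of $\Primes$-order, one checks using Proposition~\ref{prime ideals RUP_G}(2) that both generic points specialize into any maximal $\Qpg{\fp}{x}$ where $\fp$ lies over a prime in $\Primes$, since then $\fp_{p^\perp}$-conjugacy for $p \in \Primes$ only sees $\Pprime{x}$ — this is the key bookkeeping step). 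Then if $\Pprime{x} \sim_G \Pprime{y}$ we may take $x = y$ and there is nothing to prove.

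\textbf{The hard direction and main obstacle.} The \emph{only if} direction is the real work: I must produce a locally constant function (an idempotent) on $\Spec(\RUP{G} \otimes \sint)$ separating $\Qpg{0}{x}$ from $\Qpg{0}{y}$ whenever $\Pprime{x} \not\sim_G \Pprime{y}$. Following Dress' strategy as the paper suggests, the natural candidate is, for each conjugacy class $[c]$ of $\Primes$-prime elements, the class function $f_{[c]}$ on $\CC{G}$ that is $1$ on elements $g$ with $\Pprime{g} \sim_G c$ and $0$ elsewhere; one shows $f_{[c]}$ lies in the image of $\RUP{G} \otimes \sint$ inside the ring of $\sintP$-valued class functions — this is where $\Primes$-locality (rather than inverting $|G|$) is essential, and where an argument in the spirit of Corollary~\ref{cor characters of burnside images}, or an explicit idempotent-lifting/induction-theory argument, is needed. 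Given that $f_{[c]}$ is a genuine idempotent of $\RUP{G} \otimes \sint$, it is locally constant on the spectrum, takes value $1$ at $\Qpg{0}{x}$ when $\Pprime{x} \sim_G c$ and $0$ at $\Qpg{0}{y}$ when $\Pprime{y} \not\sim_G c$, so it separates the two points — giving the disconnection. \textbf{The main obstacle is precisely proving that $f_{[c]} \in \RUP{G} \otimes \sint$}: over $\Q$ or after inverting $|G|$ this is classical (Solomon, Artin induction), but in the $\Primes$-local integral setting one needs the finer induction theory that makes Dress' Burnside-ring idempotents $e_C$ available, and then transports them via $\lin$ as in Corollary~\ref{cor characters of burnside images} to land the $f_{[c]}$ inside $\RUP{G} \otimes \sint$; combining this with the inclusion description of Proposition~\ref{prime ideals RUP_G}(2) to check the value of $f_{[c]}$ at an arbitrary prime $\Qpg{\fp}{g}$ (not just the generic ones) completes the argument.
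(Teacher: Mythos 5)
Your proposal has two genuine gaps, one in each direction. For the ``only if'' direction you correctly identify that a separating idempotent $f_{[c]} \in \RUP{G} \otimes \sint$ (the indicator of the class of elements $g$ with $\Pprime{g} \sim_G c$) would suffice, but you leave its existence as ``the main obstacle,'' and the route you sketch cannot close it: the elements $\lin(e_C)$ obtained from Dress' Burnside idempotents are $\Gamma$-invariant, so their characters are constant on \emph{all} generators of a cyclic subgroup (Corollary~\ref{cor characters of burnside images}), whereas the proposition requires separating, say, $x$ from $x^3$ in $G = C_7$ (with $7 \notin \Primes$), which are non-conjugate elements generating the same subgroup. The finer idempotents $e_x$ genuinely require the roots of unity in $\sint$ and, in the paper's logic, their existence (Corollary~\ref{idempotents of RUP(G)}) is \emph{deduced from} this proposition, so taking them as input is circular in spirit. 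Moreover, the idempotent construction is unnecessary: the zig-zag criterion you yourself state is an equivalence, so the ``only if'' direction follows by taking an arbitrary zig-zag, applying Proposition~\ref{prime ideals RUP_G}(2) to each elementary link to get $\argprime{(x_i)}{p_i} \sim_G \argprime{(x_{i+1})}{p_i}$ with $p_i \in \Primes$ (only primes over $\Primes$ survive in $\sintP$), and then passing to $\Primes$-prime parts along the chain; this elementary bookkeeping is exactly the paper's argument and needs no new idempotents.

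In the ``if'' direction, your ``key bookkeeping step'' is false as stated: the inclusion $\Qpg{(0)}{\Pprime{x}} \subseteq \Qpg{\fp}{x}$ for $\fp$ over $p \in \Primes$ requires $\pprime{(\Pprime{x})} = \Pprime{x} \sim_G \pprime{x}$, and $\pprime{x}$ still retains the $q$-parts of $x$ for all other $q \in \Primes$. For example, with $G = C_6$, $\Primes = \{2,3\}$ and $x$ a generator, $\Pprime{x} = e$ while $x_{2^\perp}$ has order $3$ and $x_{3^\perp}$ has order $2$, so no single maximal ideal contains both $\Qpg{(0)}{x}$ and $\Qpg{(0)}{e}$. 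The correct argument, as in the paper, strips off one prime of $\Primes$ at a time: choose primes $\fp_i$ of $\sintP$ over the $p_i \in \Primes$ dividing $|G|$ (going-up) and build a multi-step zig-zag through the elements $g_0 = g$, $g_{i} = \argprime{(g_{i-1})}{p_{i-1}}$, ending at $\Pprime{g}$. With that repair your reduction to $\Primes$-prime elements goes through, but as written both halves of your proof are incomplete.
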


\begin{proof}
First observe that for $\fp \neq (0)$, the height one ideal $\Qpg{\fp}{x}$ lies in the closure of the height
zero ideal $\Qpg{(0)}{x}$, so without loss of generality we may assume that $\fp = \fq = (0)$.
Since $\RUP{G} \otimes \sint$ has Krull dimension one, two points $\Qpg{(0)}{x}$ and $\Qpg{(0)}{y}$ lie in the
same component if and only if there is a zig-zag of inclusions of prime ideals \\
\begin{center}
$ \xymatrix@M=5pt@R=1.3pc{
	\; & \Qpg{\fp_0}{x_0} = \Qpg{\fp_0}{x_1} & \; & \ldots & \; \\
	\Qpg{(0)}{x_0} \ar[ur] & \; & \Qpg{(0)}{x_1} \ar[ul] \ar[ur] & \; & \Qpg{(0)}{x_r} \ar[ul]
}$
\end{center}
for some elements $x = x_0, x_1, \ldots, x_r = y \in G$
and some prime ideals $\fp_i \trianglelefteq \sintP$.
By part (2) of Theorem~\ref{prime ideals RUP_G},
we have an equality $\Qpg{\fp_i}{x_i} = \Qpg{\fp_i}{x_{i+1}}$
if and only $\argprime{(x_i)}{p_i}$ is conjugate in $G$ to $\argprime{(x_{i+1})}{p_i}$,
where $p_i$ is given by $\Z \cap \fp_i = p_i \Z$. \\
For the ``only if'' part of the proposition, given a zig-zag as above, it follows that
\[ \Pprime{x} = \Pprime{(\argprime{(x_0)}{p_0})}
\sim_G \Pprime{(\argprime{(x_1)}{p_0})}
= \Pprime{(\argprime{(x_1)}{p_1})}
\sim_G \ldots
\sim_G \Pprime{(\argprime{(x_r)}{p_{r-1}})}
= \Pprime{y} \]
where $\sim_G$ indicates being conjugate in $G$. \\
For the ``if'' part, assume that $\Pprime{x} \sim_G \Pprime{y}$. Since the prime ideals $\Qpg{(0)}{g}$ only
depend on the conjucagy class of $g$, it follows that $\Qpg{(0)}{\Pprime{x}}$ and $\Qpg{(0)}{\Pprime{y}}$
agree. Thus, it suffices to show that for any $g \in G$, the prime ideals $\Qpg{(0)}{g}$ and
$\Qpg{(0)}{\Pprime{g}}$ lie in the same component.
We will construct an explicit zig-zag as above.
Let $p_0, p_1, \ldots, p_r$ be all primes in $\Primes$ that divide the order of $G$. By the going-up theorem,
we can find prime ideals $\fp_i \trianglelefteq \sintP$ such that $\fp_i \cap \Z = p_i \Z$.
Then $\Pprime{g}$ may be computed as
\[ \Pprime{g} = \argprime{(\cdots \argprime{(\argprime{g}{p_0})}{p_1} \cdots)}{p_r}. \]
Inductively, define $g_0 := g$ and $g_i := \argprime{(g_{i-1})}{p_{i-1}}$
so that we have $\argprime{(g_i)}{p_i} = \argprime{(g_{i+1})}{p_i}$.
Then these choices of elements $g_i$ and prime ideals $\fp_i$ give rise to a zig-zag
between $\Qpg{(0)}{g}$ and $\Qpg{(0)}{\Pprime{g}}$,
which completes the proof.
\end{proof}


\begin{cor} \label{idempotents of RUP(G)}
The conjugacy classes of $\Primes$-prime elements $(x)$ of $G$ are in canonical bijection with the primitive idempotents $e_x$ of
$\RUP{G} \otimes \sint$.
The character of the element $e_x$ is given as follows:
\[ \hat{\chi}(e_x)(g) = \begin{cases}
1 & \text{if } \Pprime{g} \sim_G x \\
0 & \text{otherwise}
\end{cases} \]
\end{cor}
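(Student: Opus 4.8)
The plan is to deduce everything from Theorem~\ref{components of RUP(G)} together with standard commutative algebra relating idempotents to connected components of the prime spectrum. First I would recall that in any commutative ring $S$, the primitive idempotents are in canonical bijection with the clopen connected components of $\Spec(S)$, provided $\Spec(S)$ has only finitely many components; here $S = \RUP{G} \otimes \sint$ is a finitely generated $\ZP$-algebra, hence Noetherian, so $\Spec(S)$ has finitely many connected components and the correspondence applies. By Theorem~\ref{components of RUP(G)}, these components are indexed by the conjugacy classes $(x)$ of $\Primes$-prime elements of $G$, with the component of $(x)$ consisting exactly of those primes $\Qpg{\fp}{g}$ for which $\Pprime{g} \sim_G x$ (using the Proposition preceding Corollary, which describes when two such primes lie in the same component). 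This gives the asserted bijection $(x) \mapsto e_x$, where $e_x$ is the idempotent cutting out that component.

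It then remains to compute the character $\hat\chi(e_x)(g)$. The key point is that for each $g \in G$, evaluation of the $\sintP$-linear character is a ring homomorphism $\hat\chi(-)(g) \colon S \to \sintP$ (as noted in the paragraph following Notation~\ref{notation character}), and $\sintP$ is a domain, so it has no idempotents other than $0$ and $1$. Hence $\hat\chi(e_x)(g) \in \{0,1\}$. Next, $\hat\chi(e_x)(g) = 1$ if and only if $e_x \notin \ker(\hat\chi(-)(g)) = \Qpg{(0)}{g}$, i.e.\ if and only if the prime $\Qpg{(0)}{g}$ does not lie in the vanishing locus $V(e_x)$; equivalently, since $V(e_x)$ is the union of all components other than the one indexed by $(x)$, precisely when $\Qpg{(0)}{g}$ lies in the component of $(x)$. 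By the Proposition, that happens exactly when $\Pprime{g} \sim_G x$, which is the claimed formula.

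I expect the only genuine subtlety to be bookkeeping rather than a real obstacle: one should check that the idempotents $e_x$ produced this way are genuinely \emph{primitive} (not just idempotent) and that distinct conjugacy classes give distinct $e_x$ — both follow from the component being connected and the components being pairwise disjoint, so that no nontrivial idempotent of $S$ can be supported on a single component, and the $e_x$ have disjoint supports. One can also observe that these are consistent with the already-known coarser decomposition coming from $\lin(e_L) \in \RUP{G}$: summing $e_x$ over the $\Primes$-prime generators $x$ of a fixed cyclic $\Primes$-perfect subgroup $C$ recovers (the image in $\RUP{G}\otimes\sint$ of) $\lin(e_C)$, by comparing the character formulas in Corollary~\ref{cor characters of burnside images} and here; this is not needed for the proof but is a useful sanity check and presumably foreshadows the Galois-descent argument in §\ref{subsect:idpt RU}.
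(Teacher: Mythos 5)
Your argument is essentially the paper's own proof: both use the standard correspondence between (primitive) idempotents and (minimal) clopen subsets of $\Spec(\RUP{G}\otimes\sint)$, identify these with connected components via Theorem~\ref{components of RUP(G)}, and read off the character by evaluating the global section $e_x$ at the point $\Qpg{(0)}{g}$. Your extra remarks (idempotents in the domain $\sintP$ are $0$ or $1$, primitivity from connectedness) just make explicit what the paper leaves implicit, so the proposal is correct and matches the paper's route.
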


\begin{proof}
It is a standard fact of algebraic geometry that for any commutative ring $R$, the subsets $V \subseteq \Spec(R)$ that are both open
and closed are in canonical bijection with the idempotent elements of $R$, by assigning to $V$ the global section\footnote{Here we
use that by definition, the global sections of $\Spec(R)$ agree with the ring $R$.} which is constant one on $V$ and constant zero
on the complement of $V$.
Under this identification, the primitive idempotents correspond to the minimal non-empty open and closed subsets. The latter agree
with the connected components of $\Spec(\RUP{G} \otimes \sint)$ since there are only finitely many of them. The first claim now
follows from Theorem~\ref{components of RUP(G)}. For the description of characters, note that $\hat{\chi}(e_x)(g) = 1$ if and only if the
corresponding global section $e_x$ evaluates to one at the point $\Qpg{(0)}{g}$ if and only if $\Qpg{(0)}{x}$ and $\Qpg{(0)}{g}$
are in the same connected component.
\end{proof}

\begin{rem}
Roquette~\cite{roquette:untersuchung} shows that the classification given in Corollary~\ref{idempotents of
RUP(G)} also holds for the primitive idempotents in the $p$-adic representation ring after adjoining all
$e$-th roots of unity.
\end{rem}

\begin{rem}
Using Schur's orthogonality relations, it follows from Corollary~\ref{idempotents of RUP(G)} that $e_x$ is given explicitly as
\[ e_x = \frac{1}{|C_G(x)|} \sum_{V} \chi(V)(x^{-1}) \cdot V \]
where $V$ runs over a system of representatives of the irreducible representations of $G$ and $C_G(x)$ denotes the centralizer of
$x$ in $G$. This observation goes back at least to Brauer \cite[(7)]{brauer:artin}.
The coefficients can also be expressed in terms of M\"obius functions, see \cite[Thm.~4]{solomon:burnside},
\cite[Prop.]{gluck:idempotent} and \cite[§3]{yoshida:idempotents}.
\end{rem}

\subsection{Idempotents of $\RUP{G}$} \label{subsect:idpt RU}
Recall that $\Gamma \cong (\Z/\exp(G))^\times$ denotes the Galois group of the cyclotomic extension
$\sfield/\Q$.
The left $\Gamma$-action on $\sfield$ restricts to an action on $\sint$. Let $\Gamma$ act on $\RUP{G} \otimes \sint$ via its action
on the right factor. Then clearly $\RUP{G} = (\RUP{G} \otimes \sint)^\Gamma$. The group $\Gamma$ then acts from the right on
$\Spec(\RUP{G} \otimes \sint)$ and we have $\Spec(\RUP{G}) \cong (\Spec(\RUP{G} \otimes \sint))/\Gamma$,
cf.~\cite[§11.4, Exerc.~11.4]{serre:linear-reps}.
We will now describe these $\Gamma$-orbits in terms of the prime ideals $\Qpg{\fp}{x}$.

First recall that the left $\Gamma$-action on $\sint$ induces a right $\Gamma$-action on $\Spec(\sint)$ that is given by
$\fp.\gamma = \gamma^{-1}(\fp)$.

\begin{definition}[\cite{serre:linear-reps}, §12.4]
Define a right $\Gamma$-action on the underlying set of $G$ as follows: If $\gamma \in \Gamma$ corresponds to
the unit $m \in (\Z/\exp(G))^\times$, let $g.\gamma := g^{m^{-1}}$, where $m^{-1}$ is (any integer
representing) the inverse of $m$ in the group $\Gamma$.
\end{definition}

This action is well-defined since the order of $g \in G$ divides $\exp(G)$. Moreover, it is compatible with
conjugation in $G$. We can describe the $\Gamma$-orbits in $G$ easily:

\begin{lemma}[\cite{serre:linear-reps}, §13.1, Cor.] \label{lemma gamma permutes generators}
Two elements $x,y \in G$ lie in the same $\Gamma$-orbit if and only if they generate the same cyclic subgroup of $G$.
\end{lemma}

\begin{proof}
Let $n$ divide $\exp(G)$. Then $\Gamma \cong (\Z/\exp(G))^\times$ permutes the
generators of $\Z/\exp(G)$ transitively, and the same is true for the generators of the cyclic group
$\Z/n$, viewed as a subgroup of $\Z/\exp(G)$.
\end{proof}

\begin{prop}
The left $\Gamma$-action on $\RUP{G} \otimes \sint$ induces a right $\Gamma$-action on the space
$\Spec(\RUP{G} \otimes \sint)$ which coincides with the action defined by $\Qpg{\fp}{x}.\gamma =
\Qpg{\fp.\gamma}{x.\gamma}$.
\end{prop}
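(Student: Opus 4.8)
The plan is to unwind the meaning of ``induced action'' and reduce everything to a short character computation. Recall the general mechanism, already used above for $\Spec(\sint)$: a left $\Gamma$-action on a commutative ring $R$ by ring automorphisms $\ell_\gamma\colon r \mapsto \gamma\cdot r$ satisfies $\ell_{\gamma\gamma'} = \ell_\gamma\circ\ell_{\gamma'}$, hence the assignment $\mathfrak{P}.\gamma := \ell_\gamma^{-1}(\mathfrak{P})$ sends prime ideals to prime ideals and defines a \emph{right} $\Gamma$-action on $\Spec(R)$, which on $R = \sint$ recovers $\fp.\gamma = \gamma^{-1}(\fp)$. Applying this to $R = \RUP{G}\otimes\sint$, it suffices to prove that
\[ \ell_\gamma^{-1}\big(\Qpg{\fp}{x}\big) = \Qpg{\fp.\gamma}{x.\gamma} \]
for every $x \in G$ and every prime $\fp \trianglelefteq \sintP$; since by Proposition~\ref{prime ideals RUP_G}(1) every prime ideal has this form, this describes the action on all of $\Spec(\RUP{G}\otimes\sint)$. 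Note that no separate well-definedness check for the right-hand formula is needed: the left-hand side $\ell_\gamma^{-1}(\Qpg{\fp}{x})$ manifestly depends only on the ideal $\Qpg{\fp}{x}$, so once the identity is established, independence of the chosen pair $(\fp,x)$ is automatic.

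Next I would record the effect of $\ell_\gamma$ on characters. Write $\gamma \leftrightarrow m \in (\Z/\exp(G))^\times$ and, in the notation of Notation~\ref{notation character}, $V = \sum_i \lambda_i V_i$ with $V_i$ irreducible; then $\ell_\gamma(V) = \sum_i \gamma(\lambda_i)\,V_i$. Since $\sfield$ contains all $\exp(G)$-th roots of unity, each $\chi(V_i)(y)$ is a sum of eigenvalues that are $\exp(G)$-th roots of unity, and $\gamma$ acts on these by $\zeta \mapsto \zeta^m$, so $\gamma\big(\chi(V_i)(y)\big) = \chi(V_i)(y^m)$. Taking $y = g^{m^{-1}} = g.\gamma$ and using that the order of $g$ divides $\exp(G)$, we get $\chi(V_i)(g) = \gamma\big(\chi(V_i)(g.\gamma)\big)$, hence
\[ \hat{\chi}\big(\ell_\gamma(V)\big)(g) = \sum_i \gamma(\lambda_i)\,\chi(V_i)(g) = \gamma\Big( \sum_i \lambda_i\,\chi(V_i)(g.\gamma)\Big) = \gamma\big(\hat{\chi}(V)(g.\gamma)\big). \]

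Finally I would assemble the pieces. For any $V \in \RUP{G}\otimes\sint$,
\[ V \in \ell_\gamma^{-1}\big(\Qpg{\fp}{x}\big) \iff \hat{\chi}\big(\ell_\gamma(V)\big)(x) \in \fp \iff \gamma\big(\hat{\chi}(V)(x.\gamma)\big) \in \fp \iff \hat{\chi}(V)(x.\gamma) \in \gamma^{-1}(\fp) = \fp.\gamma, \]
and the last condition says precisely $V \in \Qpg{\fp.\gamma}{x.\gamma}$. This gives $\ell_\gamma^{-1}(\Qpg{\fp}{x}) = \Qpg{\fp.\gamma}{x.\gamma}$, completing the proof. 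The only genuinely delicate point — and the one I would be careful to get right — is the bookkeeping of exponents: one must apply the classical identity $\gamma(\chi(V)(y)) = \chi(V)(y^m)$ with the power chosen so that the $m$ coming from $\ell_\gamma$ on coefficients cancels the $m^{-1}$ in the definition $x.\gamma = x^{m^{-1}}$, and one must use $\ell_\gamma^{-1}$ (not $\ell_\gamma$) for the induced map on $\Spec$ so as to obtain a right action consistent with the already-fixed convention $\fp.\gamma = \gamma^{-1}(\fp)$. Everything else is formal.
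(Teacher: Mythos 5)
Your argument is correct and follows essentially the same route as the paper: both reduce the claim to the computation that the preimage of $\Qpg{\fp}{x}$ under the coefficient action of $\gamma$ consists of those $V=\sum_i\lambda_iV_i$ with $\sum_i\lambda_i\,\chi(V_i)(x.\gamma)\in\fp.\gamma$, using the classical identity $\gamma^{-1}\bigl(\chi(V_i)(x)\bigr)=\chi(V_i)(x.\gamma)$. Your explicit verification of that character identity via roots of unity, and the remark on well-definedness of the formula, only make explicit what the paper leaves implicit.
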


\begin{proof}
As in Notation~\ref{notation character}, write $\sum_i \lambda_i \cdot V_i$ for a generic element of $\RUP{G}
\otimes \sint$. Then
\begin{align*}
\Qpg{\fp}{x}.\gamma = \gamma^{-1}(\Qpg{\fp}{g})
&= \{ \sum_i \lambda_i \cdot V_i \, | \, \sum_i \gamma(\lambda_i) \cdot \chi(V_i)(g) \in \fp \} \\
&= \{ \sum_i \lambda_i \cdot V_i \, | \, \sum_i \lambda_i \cdot \gamma^{-1}(\chi(V_i)(g)) \in \fp.\gamma \} \\
&= \{ \sum_i \lambda_i \cdot V_i \, | \, \sum_i \lambda_i \cdot \chi(V_i)(g.\gamma) \in \fp.\gamma \} \\
&= \Qpg{\fp.\gamma}{g.\gamma} \qedhere
\end{align*}
\end{proof}

\begin{cor} \label{cor components of RUP(G)}
The map
\[ G \to \pi_0(\Spec(\RUP{G})) \]
that sends an element $x$ to the component of the orbit $\Qpg{(0)}{x}.\Gamma$ induces a bijection
between the $\Gamma$-orbits of conjugacy classes of $\Primes$-prime elements $x \in G$ and the set of components of the
topological space $\Spec(\RUP{G})$. In particular, the spectrum of $\RU{G}$ is connected.
\end{cor}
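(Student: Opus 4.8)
The plan is to deduce this corollary from its ``splitting field'' counterpart, Theorem~\ref{components of RUP(G)}, by passing to $\Gamma$-invariants. The two inputs I will use have already been recorded above: the identification $\Spec(\RUP{G}) \cong \Spec(\RUP{G}\otimes\sint)/\Gamma$, and the description of the induced right $\Gamma$-action on $\Spec(\RUP{G}\otimes\sint)$ by $\Qpg{\fp}{x}.\gamma = \Qpg{\fp.\gamma}{x.\gamma}$. The first thing I would note is that $\RUP{G}\otimes\sint$ is a finitely generated module over the Noetherian ring $\sintP$, hence Noetherian; in particular $X := \Spec(\RUP{G}\otimes\sint)$ has only finitely many connected components, and they are clopen. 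Since $\Gamma$ is finite and acts on $X$ by homeomorphisms, it permutes this finite set of components, and I claim $\pi_0(X/\Gamma) = \pi_0(X)/\Gamma$ with the quotient map realizing the bijection: grouping the components of $X$ into $\Gamma$-orbits and taking the union $Y$ of the components in one orbit gives a clopen $\Gamma$-stable subset, so $X/\Gamma$ is the finite disjoint union of the clopen subsets $Y/\Gamma$, each of which is connected, being the continuous image of any single component making up $Y$. (Equivalently, one may argue with idempotents: $\RUP{G} = (\RUP{G}\otimes\sint)^\Gamma$, so its primitive idempotents are exactly the sums over $\Gamma$-orbits of the primitive idempotents of $\RUP{G}\otimes\sint$.)

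Next I would identify the resulting $\Gamma$-action on $\pi_0(X)$ under the bijection of Theorem~\ref{components of RUP(G)}. Since $(0).\gamma = \gamma^{-1}(0) = (0)$, the preceding Proposition gives $\Qpg{(0)}{x}.\gamma = \Qpg{(0)}{x.\gamma}$, so $\gamma$ carries the component of $\Qpg{(0)}{x}$ to that of $\Qpg{(0)}{x.\gamma}$; under Theorem~\ref{components of RUP(G)} this is the map $(\Pprime{x}) \mapsto (\Pprime{x.\gamma})$ on conjugacy classes of $\Primes$-prime elements. I then check that $\Primes$-prime elements are sent to $\Primes$-prime elements: if $\Pprime{x} = x$, then $x.\gamma = x^{m^{-1}}$ is a generator of $\langle x\rangle$ (since $m$ is a unit modulo $\exp(G)$, hence modulo the order of $x$), a cyclic group of order prime to $\Primes$, so $x.\gamma$ is again $\Primes$-prime and $\Pprime{x.\gamma} = x.\gamma$. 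Hence on the subset of $\Primes$-prime conjugacy classes the action is simply $(x)\mapsto(x.\gamma)$. Combining this with the previous paragraph yields the asserted bijection between $\Gamma$-orbits of conjugacy classes of $\Primes$-prime elements and $\pi_0(\Spec\RUP{G})$, and by construction it is the one induced by $x\mapsto \Qpg{(0)}{x}.\Gamma$. (Via Lemma~\ref{lemma gamma permutes generators}, together with Lemma~\ref{OP of cyclic group} and the compatibility of the $\Gamma$-action with conjugation, one can further rephrase the source as the set of conjugacy classes of cyclic $\Primes$-perfect subgroups of $G$, which is the form in which this will be used later.)

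The main obstacle is really the bookkeeping in the first paragraph — making the passage from $\pi_0(X)$ to $\pi_0(X/\Gamma)$ genuinely rigorous rather than folkloric — which is why I would insist on the Noetherian/clopen-components input instead of appealing to a generic ``$\pi_0$ of a quotient'' principle that can fail for badly behaved spaces; everything after that is formal. Finally, the ``in particular'' clause follows by specializing to $\ZP = \Z$, i.e.\ taking $\Primes$ to be the set of all primes: then the only $\Primes$-prime element of $G$ is the identity, so $\Spec(\RU{G})$ has a single connected component, recovering the classical fact \cite[§11.4, Corollary]{serre:linear-reps} that $\RU{G}$ has no idempotents other than $0$ and $1$.
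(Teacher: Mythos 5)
Your argument is correct and follows exactly the route the paper intends (and leaves implicit): combine Theorem~\ref{components of RUP(G)} with the identification $\Spec(\RUP{G}) \cong \Spec(\RUP{G}\otimes\sint)/\Gamma$ and the Proposition computing the action $\Qpg{\fp}{x}.\gamma = \Qpg{\fp.\gamma}{x.\gamma}$, then pass to $\pi_0$. Your extra care in justifying $\pi_0(X/\Gamma)=\pi_0(X)/\Gamma$ via finitely many clopen components, the check that $\Primes$-prime elements stay $\Primes$-prime under the $\Gamma$-action, and the specialization to $\Primes$ equal to all primes for the ``in particular'' clause are all sound refinements of the same approach.
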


\begin{cor} \label{cor classification in terms of Gamma-orbits}
There is a canonical bijection between the $\Gamma$-orbits of conjugacy classes of $\Primes$-prime elements $x \in G$
and the primitive idempotents in $\RUP{G}$. The idempotent $e_{x.\Gamma}$ associated to the orbit of $(x)$ has character given by
\[ \chi(e_{x.\Gamma})(g) = \begin{cases}
1 & \text{if } \Pprime{g} \sim_G x.\gamma \text{ for some } \gamma \in \Gamma \\
0 & \text{otherwise}
\end{cases} \]
\end{cor}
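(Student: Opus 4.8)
The plan is to deduce Corollary~\ref{cor classification in terms of Gamma-orbits} directly from the preceding structural results by the same dictionary between idempotents and clopen subsets of the spectrum that was already used in the proof of Corollary~\ref{idempotents of RUP(G)}. First I would invoke the standard fact that, for any commutative ring $R$, the primitive idempotents of $R$ are in canonical bijection with the minimal non-empty clopen subsets of $\Spec(R)$, and that when $\Spec(R)$ has only finitely many connected components these minimal clopen subsets are exactly the connected components. Since $\RUP{G}$ is a finitely generated $\ZP$-algebra, $\Spec(\RUP{G})$ is noetherian and hence has finitely many components, so the primitive idempotents of $\RUP{G}$ correspond bijectively to $\pi_0(\Spec(\RUP{G}))$. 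Combining this with Corollary~\ref{cor components of RUP(G)}, which identifies $\pi_0(\Spec(\RUP{G}))$ with the set of $\Gamma$-orbits of conjugacy classes of $\Primes$-prime elements of $G$, gives the asserted bijection; I would denote by $e_{x.\Gamma}$ the primitive idempotent corresponding to the component containing the orbit $\Qpg{(0)}{x}.\Gamma$.

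For the character formula, the idea is to compute the value $\chi(e_{x.\Gamma})(g)$ by viewing it through the ring map $\RUP{G} \to \RUP{G} \otimes \sint$, which on spectra is the quotient by $\Gamma$. Under this map the idempotent $e_{x.\Gamma} \in \RUP{G}$ pulls back to the idempotent of $\RUP{G} \otimes \sint$ supported on the preimage of the component of $\Qpg{(0)}{x}.\Gamma$; by Corollary~\ref{cor components of RUP(G)} and the description of the $\Gamma$-action via $\Qpg{\fp}{x}.\gamma = \Qpg{\fp.\gamma}{x.\gamma}$, this preimage is the union of the components of the $\Qpg{(0)}{x.\gamma}$ for $\gamma \in \Gamma$. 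Hence, using Corollary~\ref{idempotents of RUP(G)}, the image of $e_{x.\Gamma}$ in $\RUP{G} \otimes \sint$ equals $\sum_{(x.\gamma)} e_{x.\gamma}$, the sum over the distinct conjugacy classes in the $\Gamma$-orbit of $(x)$. Applying $\hat\chi(-)(g)$ and the formula of Corollary~\ref{idempotents of RUP(G)}, the value at $g$ is $1$ precisely when $\Pprime{g} \sim_G x.\gamma$ for some $\gamma \in \Gamma$, and $0$ otherwise; since $\chi(e_{x.\Gamma})(g) = \hat\chi(\text{image of }e_{x.\Gamma})(g)$, this is exactly the claimed formula.

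The main obstacle, such as it is, is bookkeeping rather than conceptual: one must check that the image of $e_{x.\Gamma}$ in $\RUP{G}\otimes\sint$ is genuinely the sum of the $e_{x.\gamma}$ over the orbit and not, say, a single $e_{x.\gamma}$ or something coarser — this is where one really uses that the $\Gamma$-quotient map $\Spec(\RUP{G}\otimes\sint) \to \Spec(\RUP{G})$ has fibres equal to full $\Gamma$-orbits of components, together with the compatibility of the $\Gamma$-action with conjugation in $G$ noted just before Lemma~\ref{lemma gamma permutes generators}. A small point worth spelling out is that distinct elements of the $\Gamma$-orbit of $(x)$ may give the same conjugacy class (as, e.g., when $x$ and $x^{-1}$ are conjugate), so the sum should be read as ranging over the conjugacy classes actually occurring; this does not affect the character formula, which only asks for existence of some $\gamma$. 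Everything else is a direct citation of the results already in place.
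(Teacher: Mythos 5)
Your argument is correct and follows essentially the same route as the paper: the paper likewise deduces the bijection from Corollary~\ref{cor components of RUP(G)} via the standard dictionary between idempotents and clopen subsets of the spectrum, exactly as in the proof of Corollary~\ref{idempotents of RUP(G)}. Your computation of the character by pulling back to $\RUP{G}\otimes\sint$ and writing the image of $e_{x.\Gamma}$ as the sum of the $e_{x.\gamma}$ over the orbit is only a cosmetic variant (the paper evaluates the global section at the points $\Qpg{(0)}{g}$ directly, and records your decomposition afterwards in Remark~\ref{rem idempotents of RU as sums of splitting idempotents}), and your care about repetitions within the $\Gamma$-orbit is a sensible precision.
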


\begin{proof}
This follows from Corollary~\ref{cor components of RUP(G)} in the same way that Corollary~\ref{idempotents of RUP(G)} follows from
Theorem~\ref{components of RUP(G)}, see the proof of Corollary~\ref{idempotents of RUP(G)}.
\end{proof}

\begin{rem} \label{rem idempotents of RU as sums of splitting idempotents}
In particular, we have $e_{x.\Gamma} = \sum_{\gamma \in \Gamma} e_{x.\gamma}$ in $\RUP{G}$.
A simple calculation shows that $e_{x.\gamma} = \gamma^{-1}(e_x)$ in $\RUP{G} \otimes \sint$. Therefore $e_{x.\Gamma} =
tr_{\sfield/\Q}(e_x)$ is the field trace of $e_x$. We will not use this fact.
\end{rem}

By Lemma~\ref{lemma gamma permutes generators}, we can write $e_{\langle x \rangle}' := e_{x.\Gamma}$ and
rephrase Corollary~\ref{cor classification in terms of Gamma-orbits} in terms of cyclic subgroups. At
this point, there is no dependence on the field extension $\sfield/\Q$ anymore.

\begin{cor} \label{cor classification in terms of cyclic subgroups}
There is a canonical bijection between the conjugacy classes of cyclic $\Primes$-perfect subgroups $C \in G$
and the primitive idempotents in $\RUP{G}$. The primitive idempotent $e_C'$ has character given by
\[ \chi(e_C')(g) = \begin{cases}
1 & \text{if } \langle \Pprime{g} \rangle \sim_G C \\
0 & \text{otherwise.}
\end{cases} \]
In particular, the character of $e_C'$ agrees with that of $\lin(e_C)$ given in Corollary~\ref{cor characters
of burnside images} and hence we have $e_C' = \lin(e_C)$.
\end{cor}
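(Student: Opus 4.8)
The plan is to deduce the corollary from Corollary~\ref{cor classification in terms of Gamma-orbits} by a purely group-theoretic reindexing of the primitive idempotents, and then to upgrade the resulting equality of characters to an equality of elements of $\RUP{G}$.

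First I would identify the two indexing sets. By Lemma~\ref{lemma gamma permutes generators}, the $\Gamma$-orbit of an element $x \in G$ is exactly the set of generators of the cyclic subgroup $\langle x \rangle$; since the $\Gamma$-action on $G$ commutes with conjugation, the $\Gamma$-orbit of a conjugacy class $(x)$ is determined by, and in turn determines, the conjugacy class of $\langle x \rangle$. An element $x$ is $\Primes$-prime precisely when $\langle x \rangle$ has order prime to $\Primes$, which by Lemma~\ref{OP of cyclic group} is equivalent to $\langle x \rangle$ being $\Primes$-perfect. Hence the $\Gamma$-orbits of conjugacy classes of $\Primes$-prime elements are in canonical bijection with the conjugacy classes of cyclic $\Primes$-perfect subgroups $C$; composing with the bijection of Corollary~\ref{cor classification in terms of Gamma-orbits} gives the asserted correspondence $C \mapsto e_C' := e_{x.\Gamma}$ for any generator $x$ of $C$, which is well-defined since all generators of $C$ form a single $\Gamma$-orbit.

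Next I would translate the character formula. Fix a generator $x$ of $C$. By Corollary~\ref{cor classification in terms of Gamma-orbits}, $\chi(e_C')(g) = 1$ if and only if $\Pprime{g} \sim_G x.\gamma$ for some $\gamma \in \Gamma$. If $\Pprime{g} = h(x.\gamma)h^{-1}$, then $\langle \Pprime{g} \rangle = h \langle x.\gamma \rangle h^{-1} = hCh^{-1}$, using that $x.\gamma$ generates $\langle x \rangle = C$ by Lemma~\ref{lemma gamma permutes generators}; conversely, if $\langle \Pprime{g} \rangle = hCh^{-1}$, then $h^{-1}\Pprime{g}h$ is a generator of $C$, hence of the form $x.\gamma$ by the same lemma, so $\Pprime{g} \sim_G x.\gamma$. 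Therefore $\chi(e_C')(g) = 1$ if and only if $\langle \Pprime{g} \rangle \sim_G C$, which is exactly the case distinction recorded for $\chi(\lin(e_C))$ in Corollary~\ref{cor characters of burnside images}.

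Finally, to conclude $e_C' = \lin(e_C)$ I would use that characters are faithful: $\RU{G}$ is free abelian on the irreducible representations, whose characters are linearly independent over $\C$, so the character map remains injective after the flat base change $- \otimes_\Z \ZP$, and two elements of $\RUP{G}$ with the same character must coincide. I do not expect a genuine obstacle, since the statement is a formal consequence of the splitting-field analysis carried out above; the only point that needs care is the bookkeeping between group elements (together with their conjugacy classes) and the cyclic subgroups they generate under the $\Gamma$-action, all of which is governed by Lemma~\ref{lemma gamma permutes generators}.
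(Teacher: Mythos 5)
Your proposal is correct and follows essentially the same route as the paper, which presents this corollary as a direct reindexing of Corollary~\ref{cor classification in terms of Gamma-orbits} via Lemma~\ref{lemma gamma permutes generators} (generators of a cyclic subgroup form one $\Gamma$-orbit, $\Primes$-prime elements correspond to cyclic $\Primes$-perfect subgroups), followed by comparing characters with Corollary~\ref{cor characters of burnside images}. You merely make explicit the details the paper leaves implicit, such as compatibility of the $\Gamma$-action with conjugation and the injectivity of the character map on $\RUP{G}$.
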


Theorem~\ref{intro thm idempotents RU} follows.

\begin{rem} \label{rem relevant primes}
It is clear from Corollary~\ref{cor classification in terms of cyclic subgroups} that the primitive
idempotents of $\RUP{G}$ only depend on those primes $p \in \Primes$ that divide the order of $G$.
\end{rem}



\subsection{Quick proofs of special cases} \label{subsect:quick proofs}
In the rational and $p$-local case, we can give short ad-hoc proofs of the primitivity of the elements $\lin(e_C)$ stated as
part of Theorem~\ref{intro thm idempotents RU}.

\begin{lemma} \label{lemma constant on cyclic generators}
Let $x, y \in G$ generate the same subgroup. If all character values of the virtual representation $V \in
\RUP{G}$ lie in $\ZP$, then
$ \chi(V)(x) = \chi(V)(y). $ 
\end{lemma}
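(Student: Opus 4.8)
The plan is to exploit the Galois-theoretic description of character values. Recall that the character values $\chi(V)(g)$ of any $G$-representation lie in $\sint$, the ring of integers in the $\exp(G)$-th cyclotomic field $\sfield$, with Galois group $\Gamma \cong (\Z/\exp(G))^\times$. The key point is the standard fact that for any $\gamma \in \Gamma$ corresponding to $m \in (\Z/\exp(G))^\times$, one has $\gamma(\chi(V)(g)) = \chi(V)(g^m)$; equivalently, in the notation of the paper, $\gamma^{-1}(\chi(V)(g)) = \chi(V)(g.\gamma)$. This is the identity already used in the proof of the Proposition preceding Corollary~\ref{cor components of RUP(G)}.

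First I would note that since $x$ and $y$ generate the same cyclic subgroup, Lemma~\ref{lemma gamma permutes generators} gives a $\gamma \in \Gamma$ with $y = x.\gamma$ (up to conjugacy in $G$, which does not affect character values). Hence $\chi(V)(y) = \chi(V)(x.\gamma) = \gamma^{-1}(\chi(V)(x))$. Now the hypothesis enters: since all character values of $V$ lie in $\ZP \subseteq \sintP$, the element $\chi(V)(x)$ is fixed by the $\Gamma$-action (the $\Gamma$-action on $\sint$ fixes $\Z$ pointwise, hence fixes $\ZP$ pointwise). Therefore $\gamma^{-1}(\chi(V)(x)) = \chi(V)(x)$, which gives $\chi(V)(x) = \chi(V)(y)$ as desired.

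There is essentially no obstacle here; the only mild care needed is to make sure the reduction from "generate the same subgroup" to "$y = x.\gamma$ for some $\gamma$" is handled correctly — this is exactly the content of Lemma~\ref{lemma gamma permutes generators} — and to observe that conjugacy in $G$ commutes with the $\Gamma$-action (noted just after the definition of the $\Gamma$-action on $G$), so that we may freely replace $y$ by a conjugate. Everything else is the elementary observation that $\ZP$ is pointwise $\Gamma$-fixed inside $\sintP$.
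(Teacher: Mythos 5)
Your argument is correct and is essentially identical to the paper's proof: both invoke Lemma~\ref{lemma gamma permutes generators} to write $y = x.\gamma$, use $\chi(V)(x.\gamma) = \gamma^{-1}(\chi(V)(x))$, and conclude from $\chi(V)(x) \in \ZP = (\sintP)^\Gamma$. Your extra caveat about conjugacy is unnecessary (the lemma gives $y = x.\gamma$ on the nose, not merely up to conjugacy) but harmless.
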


\begin{proof}
By Lemma~\ref{lemma gamma permutes generators}, we can find $\gamma \in \Gamma$ such that $y = x.\gamma$. Then
\[ \chi(V)(y) = \chi(V)(x.\gamma) = \gamma^{-1}(\chi(V)(x)) = \chi(V)(x) \]
because $\chi(V)(x) \in \ZP = (\sintP)^\Gamma$.
\end{proof}

\begin{cor} \label{cor rational primitivity}
For any cyclic $C \leq G$, the idempotent $\lin(e_C) \in \RU{G} \otimes \Q$ is primitive.
\end{cor}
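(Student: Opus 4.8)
The plan is to classify the idempotents of $\RU{G}\otimes\Q$ by their characters and to recognise the elements $\lin(e_C)$ as the minimal nonzero ones. Note first that for $\Primes=\emptyset$ one has $\ZP=\Q$, so $\RUP{G}=\RU{G}\otimes\Q$ and $\Pprime{g}=g$ for all $g$; thus Corollary~\ref{cor characters of burnside images} and Lemma~\ref{lemma constant on cyclic generators} are available in exactly the form stated. In particular the $\lin(e_C)$, for $C$ ranging over conjugacy classes of cyclic subgroups of $G$, are pairwise orthogonal idempotents of $\RU{G}\otimes\Q$ summing to one.

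First I would record that sending $V\in\RU{G}\otimes\Q$ to its $\sfield$-valued character (a class function $\CC{G}\to\sfield$) is an injective ring homomorphism: injective because the irreducible characters are linearly independent and $-\otimes\Q$ is flat, and a ring map because characters are multiplicative in tensor products. Consequently an idempotent $f$ is determined by $\chi(f)$. Evaluation at a fixed $g\in G$ is a ring homomorphism to the field $\sfield$, so $\chi(f)(g)=\chi(f)(g)^2$ is $0$ or $1$; write $S_f:=\{g\in G:\chi(f)(g)=1\}$, a union of conjugacy classes of $G$.

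Next, since the character of $f$ takes values in $\{0,1\}\subseteq\Q=\ZP$, Lemma~\ref{lemma constant on cyclic generators} shows that $\chi(f)$ is constant on the set of generators of each cyclic subgroup. Hence $S_f$ is a union of the sets $\{g:\langle g\rangle\sim_G C'\}$, with $C'$ running over some collection $\mathcal{D}_f$ of conjugacy classes of cyclic subgroups. On the other hand, Corollary~\ref{cor characters of burnside images} gives $\chi(\lin(e_{C'}))(g)=1$ iff $\langle g\rangle\sim_G C'$, for every conjugacy class of cyclic subgroups $C'$. Comparing characters and using injectivity yields $f=\sum_{C'\in\mathcal{D}_f}\lin(e_{C'})$; that is, the idempotents of $\RU{G}\otimes\Q$ are precisely the partial sums of the family $\{\lin(e_{C'})\}$.

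Finally I would apply this to $\lin(e_C)$. If $\lin(e_C)$ were a sum of two or more nonzero idempotents, each summand is $\sum_{C'\in\mathcal{D}_j}\lin(e_{C'})$ for some nonempty $\mathcal{D}_j$, and evaluating the character identity at a generator of each cyclic subgroup forces every $\mathcal{D}_j$ to equal $\{(C)\}$ and hence forces there to be only one summand — a contradiction. Therefore $\lin(e_C)$ is primitive. There is no serious obstacle here: the entire content is packaged into Lemma~\ref{lemma constant on cyclic generators} and Corollary~\ref{cor characters of burnside images}, and the only points requiring a little care are the observation that an idempotent's character is $\{0,1\}$-valued — which is exactly what makes Lemma~\ref{lemma constant on cyclic generators} applicable — and the bookkeeping identifying the possible supports $S_f$ with unions over conjugacy classes of cyclic subgroups.
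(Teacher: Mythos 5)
Your proof is correct and takes essentially the same route as the paper: the paper's argument is exactly that an idempotent summand of $\lin(e_C)$ has $\{0,1\}$-valued (hence rational) character, which by Lemma~\ref{lemma constant on cyclic generators} must be constant on the support of $\lin(e_C)$ described in Corollary~\ref{cor characters of burnside images}, so no nontrivial decomposition is possible. Your intermediate step classifying all idempotents of $\RU{G}\otimes\Q$ as partial sums of the $\lin(e_{C'})$ is just a mild elaboration of that same character argument.
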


\begin{proof}
Recall that the character of $\lin(e_C)$ is one on elements that generate subgroups conjugate to $C$ and zero otherwise. But
Lemma~\ref{lemma constant on cyclic generators} shows that any integer-valued character must be constant on the set where
$\lin(e_C)$ is one, hence $\lin(e_C)$ cannot decompose as a sum of idempotents.
\end{proof}

For the $p$-local case, we need another lemma. It was used in Atiyah's proof of Theorem~\ref{prime ideals RUP_G}. 

\begin{lemma}[\cite{atiyah:characters}, proof of Lemma~6.3] \label{lemma atiyah}
Let $V \in \RUp{G}$ and let $\fp$ be a prime of $\sintp = \sint \otimes \Zp$. Then
$ \chi(V)(g) \equiv \chi(V)(\pprime{g}) \mod \fp. $
\end{lemma}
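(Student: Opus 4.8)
The plan is to prove Lemma~\ref{lemma atiyah} by reducing to the special case where $g$ has prime-power order, and then exploiting the structure of $p$-groups together with the Frobenius-type congruence that makes character values well-behaved modulo primes above $p$. First I would recall that every $g \in G$ decomposes as $g = \pprime{g} \cdot u$ where $u = g_p$ is the $p$-part, $\pprime{g}$ is the $p$-prime part, and these two elements commute and generate $\langle g \rangle$. The claim only involves the cyclic subgroup $\langle g \rangle$ and the representation restricted to it, so I would restrict $V$ to $\langle g \rangle$ and work entirely inside $\RUp{\langle g \rangle}$; a virtual character of a cyclic group is a $\Zp$-combination of one-dimensional characters, so it suffices to prove the congruence for a single linear character $\lambda \colon \langle g \rangle \to \sfield^\times$.

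For a linear character, $\lambda(g) = \lambda(\pprime{g}) \cdot \lambda(u)$, where $\lambda(u)$ is a $p^k$-th root of unity for $k$ the $p$-adic valuation of $|u|$. The key number-theoretic input is that a primitive $p^k$-th root of unity $\zeta$ satisfies $\zeta \equiv 1 \bmod \fp$ for every prime $\fp$ of $\sint$ lying over $p$: indeed $1 - \zeta$ divides $p$ in $\sint$ (the minimal polynomial of $\zeta$ evaluated at $1$ is $p$ up to a unit when $\zeta$ is a primitive $p$-th root, and in general $(1-\zeta)$ still lies over $p$), hence $1-\zeta \in \fp$. Therefore $\lambda(u) \equiv 1 \bmod \fp$, giving $\lambda(g) \equiv \lambda(\pprime{g}) \bmod \fp$. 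Summing with $\Zp$-coefficients over the one-dimensional constituents of $\restr{V}{\langle g \rangle}$ yields $\chi(V)(g) \equiv \chi(V)(\pprime{g}) \bmod \fp$. Since $\fp$ was an arbitrary prime of $\sintp$, and a prime of $\sintp$ is just a prime of $\sint$ avoiding the primes inverted by $p$-localization (all of which lie over primes $\neq p$), the congruence holds for all such $\fp$ as stated.

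The main obstacle, and the only step requiring genuine care, is the divisibility claim $1 - \zeta \in \fp$ for $\zeta$ a nontrivial $p$-power root of unity and $\fp$ a prime over $p$; one must make sure this is uniform over all $p$-power orders and not just primitive $p$-th roots. The clean way is to observe that $1 - \zeta$ and $p$ generate the same radical ideal localized at $p$ — equivalently, reducing mod $\fp$ the element $\zeta$ becomes a $p$-power root of unity in the residue field $\sint/\fp$, which has characteristic $p$, and the only $p$-power root of unity in characteristic $p$ is $1$. This avoids explicit norm computations entirely. I would cite \cite{atiyah:characters} for the statement but present this short residue-field argument, noting that it is exactly the ingredient Atiyah uses; no appeal to Proposition~\ref{prime ideals RUP_G} is needed since the lemma is logically prior to it.
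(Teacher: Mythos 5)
Your proposal is correct and is essentially the paper's own argument: restrict to the cyclic group $\langle g\rangle$ and to linear characters, observe that the value at the $p$-part is a $p$-power root of unity, and conclude it is congruent to $1$ modulo $\fp$ because the residue field has characteristic $p$ (so $x^{p^r}=1$ forces $x=1$ there). The only cosmetic difference is your initial detour through $1-\zeta$ dividing $p$, which you yourself discard in favor of the residue-field argument the paper uses; note also that both you and the paper implicitly take $\fp$ to be a nonzero prime of $\sintp$, so that the residue field is indeed finite of characteristic $p$.
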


\begin{proof}
Without loss of generality, we may assume that $G$ is cyclic and $V$ one-dimensional, hence its character is
multiplicative.
Write $g = \pprime{g} \cdot h$ where the order of $h$ is $p^r$, then $ ( \chi(V)(h) )^{p^r} = 1 $. But
$\sintP/\fp$ is a finite field of characteristic $p$, so
\[ \chi(V)(h) \equiv 1 \mod \fp, \]
and consequently
\[ \chi(V)(g) \equiv \chi(V)(\pprime{g}) \cdot \chi(V)(h) \equiv \chi(V)(\pprime{g}) \mod \fp. \qedhere \]
%
\end{proof}

\begin{definition}
For $C \leq G$ cyclic of order prime to $p$, let
\[ S_C := \{ g \in G \, | \, \langle \pprime{g} \rangle \sim_G C \}. \]
\end{definition}

Combining Lemma~\ref{lemma constant on cyclic generators} and Lemma~\ref{lemma atiyah} gives: 

\begin{cor}
Let $\fp$ be any prime ideal in $\sintp$. If all character values of $V \in \RUp{G}$ lie in $\Zp$, then the
character of $V$ is constant modulo $\fp$ on the set $S_C$.
\end{cor}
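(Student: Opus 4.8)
The statement to prove combines the two preceding lemmas for a fixed cyclic subgroup $C \leq G$ of order prime to $p$. The plan is as follows. Fix $V \in \RUp{G}$ with all character values in $\Zp$, fix a prime ideal $\fp \trianglelefteq \sintp$, and fix two elements $g, g' \in S_C$; the goal is to show $\chi(V)(g) \equiv \chi(V)(g') \bmod \fp$. First I would apply Lemma~\ref{lemma atiyah} to both $g$ and $g'$, which gives $\chi(V)(g) \equiv \chi(V)(\pprime{g}) \bmod \fp$ and $\chi(V)(g') \equiv \chi(V)(\pprime{g'}) \bmod \fp$; this reduces the problem to comparing the character values at the $p$-prime parts $\pprime{g}$ and $\pprime{g'}$.

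Next, by definition of $S_C$, the cyclic subgroups $\langle \pprime{g} \rangle$ and $\langle \pprime{g'} \rangle$ are both conjugate in $G$ to $C$. Hence $\pprime{g}$ and $\pprime{g'}$ are conjugate to generators $x$ and $y$ of $C$ respectively, and since the character of $V$ is a class function, $\chi(V)(\pprime{g}) = \chi(V)(x)$ and $\chi(V)(\pprime{g'}) = \chi(V)(y)$ with $x, y$ generating the same cyclic subgroup $C$. Now Lemma~\ref{lemma constant on cyclic generators} applies directly (using the hypothesis that all character values of $V$ lie in $\Zp$) to give $\chi(V)(x) = \chi(V)(y)$, in fact an equality in $\Zp$, not merely a congruence mod $\fp$. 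Stringing these together yields $\chi(V)(g) \equiv \chi(V)(g') \bmod \fp$, as required.

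There is no real obstacle here — the corollary is purely a matter of assembling the two cited lemmas in the right order — so I would keep the proof to two or three sentences. The only point worth a word of care is that $S_C$ is closed under $G$-conjugacy and under passing to/from elements with the same $p$-prime part, which is exactly what makes the two reduction steps legitimate; this is immediate from the definition of $S_C$ in terms of the conjugacy class of $\langle \pprime{g} \rangle$.

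\begin{proof}
Let $g, g' \in S_C$. By Lemma~\ref{lemma atiyah},
\[ \chi(V)(g) \equiv \chi(V)(\pprime{g}) \quad\text{and}\quad \chi(V)(g') \equiv \chi(V)(\pprime{g'}) \pmod{\fp}. \]
By definition of $S_C$, the subgroups $\langle \pprime{g} \rangle$ and $\langle \pprime{g'} \rangle$ are conjugate in $G$ to $C$, so we may choose generators $x, y$ of a common conjugate of $C$ with $\pprime{g}$ conjugate to $x$ and $\pprime{g'}$ conjugate to $y$. Since characters are class functions, $\chi(V)(\pprime{g}) = \chi(V)(x)$ and $\chi(V)(\pprime{g'}) = \chi(V)(y)$. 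As $x$ and $y$ generate the same cyclic subgroup and all character values of $V$ lie in $\Zp$, Lemma~\ref{lemma constant on cyclic generators} gives $\chi(V)(x) = \chi(V)(y)$. Combining these identities, $\chi(V)(g) \equiv \chi(V)(g') \pmod{\fp}$.
\end{proof}
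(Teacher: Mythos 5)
Your proof is correct and is exactly the argument the paper intends: the corollary is stated there as the immediate combination of Lemma~\ref{lemma atiyah} (reduce to $p$-prime parts modulo $\fp$) and Lemma~\ref{lemma constant on cyclic generators} (constancy on generators of a common cyclic subgroup, using that the values lie in $\Zp$ and that characters are class functions). No discrepancies to report.
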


Finally, a proof similar to that of Corollary~\ref{cor rational primitivity} shows:

\begin{cor}
For any cyclic $p$-perfect $C \leq G$, the idempotent $\lin(e_C) \in \RUp{G}$ is
primitive.
\end{cor}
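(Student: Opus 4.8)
The plan is to mimic the proof of Corollary~\ref{cor rational primitivity} almost verbatim, substituting the $p$-local congruence for the exact equality of character values. First I would recall from Corollary~\ref{cor characters of burnside images} that $\lin(e_C)$ is an idempotent of $\RUp{G}$, so the only thing at stake is primitivity; and that its character is the indicator function of the set $S_C = \{ g \in G \mid \langle \pprime{g} \rangle \sim_G C \}$, taking the value $1$ on $S_C$ and $0$ elsewhere.

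Next, suppose for contradiction that $\lin(e_C) = f_1 + f_2$ with $f_1, f_2$ non-zero orthogonal idempotents in $\RUp{G}$. Since $\RUp{G} \subseteq \sintp$-valued class functions and each $f_i$ is idempotent, $f_i$ is itself a $\Zp$-valued virtual character (its character takes values in $\Zp$ because $f_i^2 = f_i$ forces every character value $z$ to satisfy $z^2 = z$ over the integral domain — wait, one must be slightly careful here: $f_i$ need only be an element of $\RUp{G}$, whose character values lie in $\Zp$ by definition of $\RUp{G}$). The character of $f_i$ is supported on $S_C$ (since $f_i = f_i \cdot \lin(e_C)$ as $\lin(e_C)$ acts as the identity on its summand) and is $\Zp$-valued. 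By the Corollary preceding this statement — combining Lemma~\ref{lemma constant on cyclic generators} and Lemma~\ref{lemma atiyah} — the character of $f_i$ is constant modulo $\fp$ on $S_C$ for every prime ideal $\fp$ of $\sintp$. I would fix a point $g_0 \in S_C$, e.g. a generator of (a conjugate of) $C$; then $\chi(f_1)(g) \equiv \chi(f_1)(g_0) \bmod \fp$ for all $g \in S_C$ and all $\fp$. Since the intersection of all primes of $\sintp$ lying over a fixed rational prime, or indeed the intersection over enough primes, is zero in $\Zp$ (the nilradical of $\Zp$ is zero; more simply, if an integer is divisible by infinitely many rational primes it is zero — but here we only have $p$-local integers, so I would instead use that $\chi(f_1)(g) - \chi(f_1)(g_0) \in \Zp$ is divisible by every prime of $\sintp$ over every rational prime $\ell \neq p$, forcing it to be zero). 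Hence $\chi(f_1)$ is constant on $S_C$, equal to some $c \in \{0,1\}$ (being idempotent-valued). If $c = 0$ then $f_1 = 0$; if $c = 1$ then $f_1 = \lin(e_C)$ and $f_2 = 0$; either way a contradiction.

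The main obstacle is the passage from "constant modulo $\fp$ for all $\fp$" to "constant": over $\sintp$ we cannot invoke infinitely many residue characteristics the way one does over $\Z$, so I must argue that for each fixed pair $g, g_0 \in S_C$ the difference $\chi(f_1)(g) - \chi(f_1)(g_0)$, which a priori lies in $\Zp$, is killed modulo every prime of $\sintp$ over a single well-chosen rational prime $\ell \neq p$ (any $\ell$ not dividing the relevant index of $\Zp$ in $\sintp$, or simply any $\ell \neq p$, works since Lemma~\ref{lemma atiyah} is stated for all primes $\fp$ of $\sintp$ and Lemma~\ref{lemma constant on cyclic generators} is unconditional); then using that $\bigcap_{r \geq 1} \ell^r \Zp = 0$ in $\Zp$ for $\ell \neq p$, together with the fact that $\Zp/(\ell) \hookrightarrow \sintp/\fp$, to conclude the difference is zero. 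Alternatively, and perhaps more cleanly, I would note that $\chi(f_1)(g) - \chi(f_1)(g_0)$ lies in $\fp \cap \Zp$ for a prime $\fp$ over a rational prime $\ell \neq p$, so it lies in $\ell\Zp$; but $\chi(f_1)$ takes only the values $0$ and $1$ (idempotency), and $0 - 1 = -1 \notin \ell \Zp$, so in fact $\chi(f_1)(g) = \chi(f_1)(g_0)$ outright. This last observation makes the argument immediate and is the form I would write up: idempotency pins the character values to $\{0,1\} \subseteq \Zp$, and two distinct such values cannot be congruent modulo any proper ideal, so "constant mod $\fp$" upgrades for free to "constant". I would then close by remarking, as the paper does after Corollary~\ref{cor rational primitivity}, that this reproves the $p$-local case of Theorem~\ref{intro thm idempotents RU} without the machinery of §\ref{subsect:atiyah}–§\ref{subsect:idpt RU}.
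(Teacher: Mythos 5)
Your final version of the argument is correct and is precisely what the paper intends: the paper itself only says that ``a proof similar to that of Corollary~\ref{cor rational primitivity} shows'' the claim, the similarity being exactly your substitution of the congruence ``constant modulo $\fp$ on $S_C$'' (the unlabelled corollary combining Lemma~\ref{lemma constant on cyclic generators} and Lemma~\ref{lemma atiyah}) for the exact equality used rationally, together with the observation that character evaluation at any $g$ is a ring homomorphism into the domain $\sintp$, so an idempotent below $\lin(e_C)$ has character values in $\{0,1\}$ supported on $S_C$, and $0\not\equiv 1$ modulo any proper prime ideal.

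Two slips in your discarded detours should not survive into the write-up. First, in $\sintp = \sint\otimes\Zp$ every rational prime $\ell\neq p$ is invertible, so there are \emph{no} primes of $\sintp$ lying over $\ell\neq p$, and $\bigcap_{r}\ell^{r}\Zp = \Zp$ rather than $0$; the nonzero primes of $\sintp$ all lie over $p$, which is exactly why the proof of Lemma~\ref{lemma atiyah} (needing $\sintp/\fp$ finite of characteristic $p$) applies to them, and any one such maximal prime suffices for your 0--1 argument. Second, character values of a general element of $\RUp{G}$ lie only in $\sintp$, not in $\Zp$ ``by definition''; it is your first instinct --- idempotency gives $z^2=z$ in the domain $\sintp$, hence $z\in\{0,1\}\subseteq\Zp$ --- that both puts the values of the summands $f_i$ in $\Zp$ (licensing Lemma~\ref{lemma constant on cyclic generators}) and makes the congruence upgrade to equality immediate. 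With those corrections your clean form is sound and matches the paper's route.
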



Lemma~\ref{lemma atiyah} does not hold in the general $\Primes$-local case, as the next example shows.
However, it follows from Theorem~\ref{intro thm idempotents RU} that the statement becomes true
under the additional assumption that the character of $V$ be zero outside of $S_C$. We do not know how to use
this assumption to give a quick proof of the primitivity of the elements $\lin(e_C)$ that applies to all choices of
$\Primes$.

\begin{example}
Let $G = C_2 \times C_3$ be the cyclic group of order $6$ and $\Primes = \{ 2, 3 \}$. Write $\one$ for the
trivial representation and let $V \in \RUP{G}$ be given as the tensor product of the sign representation of
$C_2$ with the sum of the two non-trivial irreducible $C_3$-representations. Let $g \in G$ be a generator and
observe that $\Pprime{g} = 1$. However,
\[ \chi(V-\one)(g) = 0 \not\equiv 1 = \chi(V-\one)(1) \mod \fp \]
for any prime ideal $\fp$ of $\sintP$.
\end{example}

\section{Idempotent splittings of representation rings} \label{section:alg}
As before, let $\Primes$ be a fixed collection of prime numbers, and let $\RXP{G}$ denote one of the rings
$\ROP{G}$ or $\RUP{G}$. The goal of this section is to describe the multiplicativity of the idempotent
splitting
\[ \RXP{-} \cong \prod_{(C)} \Loc{\RXP{-}}{e_C}. \]
We start by briefly recalling the notion of an (incomplete) Tambara functor in §\ref{subsect:ITFs}. In
§\ref{subsect:alg mult summands}, we study the multiplicativity of the idempotent splitting of $\RXP{-}$:  we
characterize the norms which are compatible with $e_C$-localization in Theorem~\ref{thm rep splitting norms}
and describe the incomplete Tambara functor structure of each idempotent summand in Theorem~\ref{thm rep
splitting ITF}. It is then easy to read off the structure that is preserved by the entire splitting, as we
explain in §\ref{subsect:alg mult splitting}.


\subsection{Incomplete Tambara functors} \label{subsect:ITFs}
Recall that many naturally arising Mackey functors have additional multiplicative structure.

\begin{definition}
A \emph{Green functor} is a Mackey functor $\uR$ equipped with commutative ring structures on the values $\uR(H)$ for all $H \leq G$
such that all restrictions maps $R^H_K \colon \uR(H) \to \uR(K)$ become ring homomorphisms and all transfer maps $T_K^H \colon
\uR(K) \to \uR(H)$ are morphisms of $\uR(H)$-modules.
\end{definition}

Often, Green functors come equipped with additional \emph{multiplicative transfer maps} or \emph{norms}
$N_K^H \colon \uR(K) \to \uR(H)$ for all subgroup inclusions $K \leq H \leq G$, satisfying a number of compatibility relations for
norms, additive transfers and restrictions. Tambara \cite{tambara} axiomatized the structure of these objects and called them
\emph{TNR-functors}; nowadays they are referred to as \emph{Tambara functors}.

Blumberg and Hill \cite{BH:ITF} introduced the more general notion of an \emph{(incomplete)} $\cI$-\emph{Tambara functor} that
only admits a partial collection of norms for certain subgroup inclusions $K \leq H \leq G$, parametrized by well-behaved
collections $\cI$ of \emph{admissible} $H$-\emph{sets} $H/K$. These \emph{indexing systems} form a poset under inclusion. Thus,
$\cI$-Tambara functors for varying $\cI$ interpolate between the notion of a Green functor (which doesn't necessarily admit any
norms) and that of a Tambara functor (which admits all possible norms). We refer to the above sources for precise definitions and
further details.

\begin{example} \label{example TFs}
The Mackey functors defined by the Burnside ring $\A{-}$ and the representation rings $\RX{-}$ are examples of
Tambara functors. The multiplicative norms $N_K^H$ in the Burnside ring are induced by the co-induction
functor $map_K(H,-)$ from finite $K$-sets to finite $H$-sets. Those of the representation ring are induced by
tensor induction of representations. The linearization maps $\lin \colon \A{-} \to \RX{-}$ are maps of Tambara
functors. \\
More generally, Brun \cite{brun:eqvar-spectra} showed that the zeroth equivariant homotopy groups of a $G$-$E_\infty$ ring spectrum
naturally form a Tambara functor.
\end{example}

\begin{rem}
Note that the $\RO{G}$-graded homotopy groups of a $G$-$E_\infty$ ring spectrum form a \emph{graded Tambara functor}
in the sense of Angeltveit-Bohmann \cite{angeltveit-bohmann}.
\end{rem}

\subsection{Multiplicativity of the idempotent summands} \label{subsect:alg mult summands}
Observe that the canonical localization maps $\RXP{-} \to \Loc{\RXP{-}}{e_C}$ are levelwise ring homomorphisms that are
compatible with the Mackey functor structure, hence the idempotent splitting of $\RXP{G}$ induces a splitting of the underlying
Green functor of $\RXP{-}$. 
Our next goal is to describe the idempotent summands $\Loc{\RXP{-}}{e_C}$ by proving the equivalence of the
statements (c), (d) and (e) of Theorem~\ref{intro summary thm}. For convenience of the reader, we record this
in the following theorem.

\begin{thm} \label{thm rep splitting norms}
Let $C \leq G$ be a cyclic $\Primes$-perfect subgroup and let $e_C \in \AP{G}$ be the corresponding primitive
idempotent element. Fix subgroups $K \leq H \leq G$. Then the following are equivalent:
\begin{enumerate}
  \item[(c)] The norm map $N_K^H \colon \AP{K} \to \AP{H}$ descends to a well-defined map of multiplicative
  monoids
  \[ \tilde{N}_K^H \colon \Loc{\AP{K}}{e_C} \to \Loc{\AP{H}}{e_C}. \]
  \item[(d)] The norm map $N_K^H \colon \RXP{K} \to \RXP{H}$ descends to a well-defined map of multiplicative
  monoids
  \[ \tilde{N}_K^H \colon \Loc{\RXP{K}}{e_C} \to \Loc{\RXP{H}}{e_C}. \]
  \item[(e)] Any subgroup $C' \leq H$ conjugate in $G$ to $C$ lies in $K$.
\end{enumerate}
\end{thm}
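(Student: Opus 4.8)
The plan is to prove the cyclic chain of implications $(e) \Rightarrow (c) \Rightarrow (d)$ together with $(d) \Rightarrow (e)$, so that the hard analytic work is confined to the Burnside ring, where $e_C$ has a clean description via marks (Theorem~\ref{Dress idempotents}).

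\textbf{The implication $(d) \Rightarrow (e)$.}
First I would pin down what it means for the norm to descend. The norm $N_K^H$ is a multiplicative (not additive) map, and $\Loc{\RXP{H}}{e_C}$ is the localization inverting $e_C$; concretely an element descends iff the composite $\RXP{K} \xrightarrow{N_K^H} \RXP{H} \to \Loc{\RXP{H}}{e_C}$ factors through $\Loc{\RXP{K}}{e_C}$, equivalently iff $N_K^H$ sends the kernel ideal of $\RXP{K} \to \Loc{\RXP{K}}{e_C}$ (i.e.\ the annihilator of some power of $e_C|_K$, which by Dress/Theorem~\ref{intro thm idempotents RU} is generated by the complementary idempotent $1-e_C|_K$) into the kernel of $\RXP{H} \to \Loc{\RXP{H}}{e_C}$. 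Since $N_K^H(0) = 0$ is automatic but $N_K^H$ is not additive, the right formulation (following the prequel \cite{boehme:mult-idempot}) is the ``Hill--Hopkins'' obstruction: $\tilde N_K^H$ exists iff $N_K^H(x \cdot e_C|_K) \cdot e_C|_H = N_K^H(x) \cdot e_C|_H$ for all $x$, equivalently iff $e_C|_H$ divides $N_K^H(e_C|_K)$ up to a unit, i.e.\ $e_C|_H \cdot N_K^H(e_C|_K) = e_C|_H$. So the whole theorem reduces to computing the marks (for the Burnside case) or characters (for the representation-ring case) of $N_K^H(e_C|_K)$ and comparing with those of $e_C|_H$. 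For $(d) \Rightarrow (e)$ I would take a witness subgroup $C' \leq H$ conjugate to $C$ but \emph{not} contained in $K$, pick a generator $g$ of $C'$, evaluate the relevant character/mark at $g$: the character of $e_C|_H$ at $g$ is $1$ (since $\langle \Pprime{g}\rangle = C' \sim_G C$), whereas the tensor-induction formula for $N_K^H$ expresses $\chi(N_K^H(e_C|_K))(g)$ as a product of $\chi(e_C|_K)$-values over the orbits of $\langle g\rangle$ on $H/K$, and because $C' \not\leq K$ at least one such orbit is free, contributing a value of $e_C|_K$ at a non-generator which one checks is $0$; hence the product is $0 \neq 1$, contradicting descent.

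\textbf{The implication $(e) \Rightarrow (c)$.}
Here I would run the same mark computation in $\AP{-}$ in the other direction. Assuming $(e)$, for every $g \in H$ and every orbit $H/K \supseteq \langle g\rangle x K$ the stabilizer is $\langle g\rangle \cap {}^x K$, and $(e)$ forces: whenever $\langle \Pprime{g}\rangle$ is conjugate to $C$ in $G$, each such stabilizer still has $\Primes$-residual subgroup conjugate to $C$, so every factor $\phi^{\langle g\rangle \cap {}^xK}(e_C|_K)$ in the Tambara-reciprocity / Brun formula for $\phi^{\langle g\rangle}(N_K^H(e_C|_K))$ equals $1$; and when $\langle \Pprime{g}\rangle \not\sim_G C$ the mark $\phi^{\langle g\rangle}(e_C|_H)$ is already $0$ so there is nothing to check. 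This yields $\phi^{\langle g\rangle}(N_K^H(e_C|_K)) = \phi^{\langle g\rangle}(e_C|_H)$ for all cyclic $\langle g\rangle \leq H$, hence $e_C|_H \cdot N_K^H(e_C|_K) = e_C|_H$ after observing both sides have the same marks at \emph{all} subgroups (using that $e_C|_H$ kills marks outside the relevant conjugacy class), which is exactly the Hill--Hopkins descent condition, giving $(c)$. This step is essentially a reprint of the argument in \cite{boehme:mult-idempot} and I would cite it rather than redo it.

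\textbf{The implication $(c) \Rightarrow (d)$.}
This is the genuinely new and cleanest step, exploiting that $\lin \colon \AP{-} \to \RXP{-}$ is a map of Tambara functors (Example~\ref{example TFs}) sending $e_C \mapsto \lin(e_C) = e_C'$, the primitive idempotent of $\RXP{-}$ (Theorem~\ref{intro thm idempotents RU} and Corollary~\ref{intro cor subrings} cover both the complex and real cases). Because $\lin$ commutes with restriction and with norms, it intertwines the two Hill--Hopkins obstructions: $\lin\big(e_C|_H \cdot N_K^H(e_C|_K)\big) = \lin(e_C)|_H \cdot N_K^H(\lin(e_C)|_K)$. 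So if $(c)$ holds, applying $\lin$ to the identity $e_C|_H \cdot N_K^H(e_C|_K) = e_C|_H$ yields immediately the corresponding identity in $\RXP{H}$, i.e.\ $(d)$. (Alternatively, and perhaps more transparently, one can argue module-theoretically: $\Loc{\RXP{-}}{e_C} \cong \RXP{-} \otimes_{\AP{-}} \Loc{\AP{-}}{e_C}$ levelwise, and a norm descends to an extension of scalars along a Tambara-functor map precisely when it descends downstairs — this is the ring-theoretic shadow of the $K$-theory computation in §\ref{subsect:intro mult}.)

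\textbf{Main obstacle.}
The crux is the bookkeeping in the $(e) \Leftrightarrow (c)$ equivalence: making the orbit-counting formula for $\phi^{\langle g\rangle}\big(N_K^H(x)\big)$ precise (this is multiplicative induction of $G$-sets, $X \mapsto \mathrm{map}_K(H,X)$, and its fixed points under a cyclic group decompose as a product over $\langle g\rangle$-orbits of $H/K$ of fixed-point sets of the iterated stabilizers), and then checking that the $\Primes$-residual-subgroup condition in $(e)$ is \emph{exactly} what makes every relevant factor equal to $1$ (no more, no less). Tracking how $\OP{-}$ of an intersection $\langle g\rangle \cap {}^xK$ behaves, and in particular that $C' \not\leq K$ is equivalent to the appearance of a non-unit mark-value of $e_C|_K$ at a proper subgroup or non-generator, is where the group theory lives. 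Fortunately this is precisely Proposition/Theorem of the prequel \cite{boehme:mult-idempot}, so in the write-up I expect to \emph{state} the equivalence $(c) \Leftrightarrow (e)$ as quoted from there and spend the real effort only on the new link $(c) \Leftrightarrow (d)$ via functoriality of $\lin$.
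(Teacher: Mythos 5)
Your proposal is correct in substance and shares the paper's backbone: the equivalence (c)$\Leftrightarrow$(e) is quoted from the prequel, the descent condition is reduced to the idempotent equation $e_C|_H \cdot N_K^H(e_C|_K) = e_C|_H$, and (c)$\Rightarrow$(d) follows by applying $\lin$, which is a map of Tambara functors. Where you genuinely diverge is in closing the cycle: the paper proves (d)$\Rightarrow$(c) by observing that both (c) and (d) are equivalent to the \emph{same} equation transported along $\lin$, and that $\lin$ is injective on the summand $e_{cyc}\cdot \AP{-}$ containing all the relevant elements (Lemma~\ref{lemma kernel lin}); this is short, needs no character formula for norms, and treats $\RUP{-}$ and $\ROP{-}$ uniformly. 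You instead prove (d)$\Rightarrow$(e) directly via the tensor-induction character formula, which has the virtue of exhibiting the explicit obstruction (a vanishing character value at a generator of the offending $C'$), at the cost of some bookkeeping. Two repairs are needed in your argument: first, your claim that $C' \not\leq K$ forces a \emph{free} $\langle g\rangle$-orbit on $H/K$ is false in general; what you actually have (and all you need) is that the orbit of the identity coset has size $n=[\langle g\rangle : \langle g\rangle \cap K]>1$, so the corresponding factor is $\chi(e_C|_K)(g^{n})$, and since $n$ divides $|g|$, the element $g^{n}$ is $\Primes$-prime and generates a \emph{proper} subgroup of $C'$, hence that factor is $0$ by Corollary~\ref{cor characters of burnside images}. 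Second, your verbal phrase ``$e_C|_H$ divides $N_K^H(e_C|_K)$'' has the divisibility backwards, though the displayed equation you actually use is the correct one; and for $\RXP{-}=\ROP{-}$ you should note that the character argument is carried out after the injective, norm-preserving complexification into $\RUP{-}$ (or simply invoke Corollary~\ref{intro cor subrings}). With these small fixes your route is a valid, slightly more computational alternative to the paper's injectivity argument.
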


%

It was proven in \cite[Thm.~4.1]{boehme:mult-idempot} that the statements (c) and (e) are equivalent, so
Theorem~\ref{thm rep splitting norms} reduces to showing that (c) and (d) are equivalent.


We recall the following fact due to Blumberg and Hill (see \cite[Thm.~2.33]{boehme:mult-idempot} for an
elementary proof):

\begin{thm}[\cite{BH:ITF}, Thm.~5.25] \label{thm alg preservation}
Let $\uR$ be an $\cI$-Tambara functor structured by an indexing system $\cI$.
Let $x \in \uR(G)$. Then the orbit-wise localization $\Loc{\uR}{x}$ is a localization in the category of $\cI$-Tambara functors if
and only if for all admissible sets $H/K$ of $\cI$, the element $N^H_K R^G_K(x)$ divides a power
of $R^G_H(x)$.
\end{thm}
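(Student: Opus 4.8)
The plan is to prove the two implications separately, after first disposing of the (routine) claim that the orbit-wise localization $\Loc{\uR}{x}$ is automatically a Green functor. At each level $H$ one localizes the commutative ring $\uR(H)$ at the multiplicative set $S_H$ generated by $R^G_H(x)$; since $R^H_K(R^G_H(x)) = R^G_K(x)$, the restrictions extend to ring homomorphisms of localizations, and since $T^H_K(R^H_K(r)\cdot a) = r\cdot T^H_K(a)$ by Frobenius reciprocity, the transfers extend to additive maps $\Loc{\uR}{x}(K) \to \Loc{\uR}{x}(H)$; exactness of localization takes care of the double-coset relations and the remaining additive identities. Thus the only issue is whether the norms descend, and whether the result has the correct universal property.

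For the ``only if'' direction, suppose $\Loc{\uR}{x}$ carries an $\cI$-Tambara structure making $\uR \to \Loc{\uR}{x}$ the localization in the category of $\cI$-Tambara functors. In particular $\uR \to \Loc{\uR}{x}$ is a map of $\cI$-Tambara functors, so it commutes with the norms $\tilde N^H_K$ for every admissible $H/K$. The element $R^G_K(x)$ is a unit in $\Loc{\uR}{x}(K)$ by construction, and $\tilde N^H_K$ is a unital map of multiplicative monoids, hence sends units to units; therefore the image of $N^H_K R^G_K(x)$ in $\Loc{\uR}{x}(H) = S_H^{-1}\uR(H)$ is a unit. An element of $\uR(H)$ has unit image in $S_H^{-1}\uR(H)$ precisely when it divides a power of $R^G_H(x)$, which is the asserted divisibility.

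For the ``if'' direction, assume that for every admissible $H/K$ the element $N^H_K R^G_K(x)$ divides a power of $R^G_H(x)$. Then $N^H_K(R^G_K(x))$ is a unit in $\Loc{\uR}{x}(H)$, so one may \emph{define} $\tilde N^H_K\bigl(a/(R^G_K x)^n\bigr) := N^H_K(a)\cdot N^H_K(R^G_K x)^{-n}$. Well-definedness reduces, after absorbing powers of the unit $N^H_K(R^G_K x)$, to the following elementary point: if $(R^G_K x)^\ell u = (R^G_K x)^\ell v$ in $\uR(K)$, then $N^H_K(u)$ and $N^H_K(v)$ have equal image in $\Loc{\uR}{x}(H)$; this follows by applying $N^H_K$, which gives $N^H_K(R^G_K x)^\ell N^H_K(u) = N^H_K(R^G_K x)^\ell N^H_K(v)$, and then multiplying by the $\ell$-th power of a witness $c$ with $c\cdot N^H_K(R^G_K x) = (R^G_H x)^m$, turning the equation into $(R^G_H x)^{\ell m}\bigl(N^H_K(u) - N^H_K(v)\bigr) = 0$. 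The resulting $\tilde N^H_K$ is a unital multiplicative map compatible with the localization maps $\lambda_H\colon \uR(H) \to \Loc{\uR}{x}(H)$; and since a unital multiplicative map out of a localization $S^{-1}A$ is completely determined by its restriction to $A$, all of the purely multiplicative Tambara axioms --- $\tilde N^H_H = \id$, transitivity $\tilde N^H_K\tilde N^K_L = \tilde N^H_L$, and compatibility of norms with restrictions and with the ring multiplication --- are inherited from $\uR$ for free.

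The one axiom that requires genuine work is the distributive law, which expresses $\tilde N^H_K(\alpha + \beta)$ as a sum of transfers of norms of products of restrictions of $\alpha$ and $\beta$; this is where the non-additivity of the norm forces an argument by hand rather than by simply ``localizing the additive structure''. I would verify it by bringing $\alpha$ and $\beta$ to a common denominator $(R^G_K x)^n$, applying the distributive law for $N^H_K$ in $\uR$ to the numerator, dividing through by $N^H_K(R^G_K x)^n$, and matching the outcome term by term with the localized formula, using that $\cI$ is an indexing system (so that every subquotient norm $N^{H_S}_{K\cap H_S}$ occurring in the distributive law is again admissible, hence already extended to $\Loc{\uR}{x}$) and that each $\lambda_H$ is a ring epimorphism. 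This bookkeeping is the main obstacle; everything else is formal. Finally, once $\Loc{\uR}{x}$ is an $\cI$-Tambara functor, its universal property is immediate: given an $\cI$-Tambara functor $\uT$ with a map $\uR \to \uT$ under which $x$ becomes invertible, the underlying Green-functor map factors uniquely through $\Loc{\uR}{x}$ by the ordinary one-variable universal property of localization at each level (the restriction of the unit $x \in \uT(G)$ to $\uT(H)$ is a unit, since restrictions are ring homomorphisms), and this factorization automatically commutes with the norms because the norms on $\Loc{\uR}{x}$ and on $\uT$ are unital multiplicative maps determined by their restrictions to $\uR$, where compatibility is given. Hence it is the unique morphism of $\cI$-Tambara functors, which completes the proof.
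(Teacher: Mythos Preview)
The paper does not contain its own proof of this theorem. It is quoted from \cite{BH:ITF} (Thm.~5.25), with a parenthetical pointer to an elementary proof in the prequel \cite[Thm.~2.33]{boehme:mult-idempot}; the surrounding text only uses the statement as a black box (in combination with Lemma~\ref{lemma little trick}) to prove Theorem~\ref{thm rep splitting norms}. So there is nothing in this paper to compare your argument against.

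That said, your proposal is a reasonable direct proof along the lines one would expect: the ``only if'' direction is exactly the observation that a map of $\cI$-Tambara functors sends units to units under admissible norms, and the ``if'' direction is the natural extension-of-norms-to-localizations argument, with the distributive law isolated as the one non-formal verification. One small caveat on that step: when you expand $N^H_K(a+b)$ via the Tambara distributive law and then divide by $N^H_K(R^G_K x)^n$, you should make explicit that for each summand (indexed by an orbit $H/H_S$ with $H_S$ acting on some $K$-coset) the factor $N^H_K(R^G_K x)^n$ can itself be rewritten, via the same distributive law applied to the constant element $R^G_K x$, as the transfer--norm--restriction expression needed to match denominators term by term; equivalently, one uses that $N^{H_S}_{K\cap H_S} R^K_{K\cap H_S}(R^G_K x)$ is again a unit in $\Loc{\uR}{x}(H_S)$ because $H_S/(K\cap H_S)$ is admissible. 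Without that remark the ``matching term by term'' is a genuine gap, since the denominators on the two sides are not literally the same before this rewriting.
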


If the element $x$ is idempotent, then checking the above division relation amounts to checking an equation:

\begin{lemma} \label{lemma little trick}
Let $e, e' \in R$ be idempotents in a commutative ring. Then $e$ divides $e'$ if and only if $e \cdot e' =
e'$.
\end{lemma}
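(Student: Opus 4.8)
The statement to prove is Lemma~\ref{lemma little trick}: for idempotents $e, e'$ in a commutative ring $R$, one has $e \mid e'$ if and only if $e \cdot e' = e'$. This is an elementary ``iff'', so I would simply prove the two implications separately and keep the argument short.

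For the ``if'' direction, suppose $e \cdot e' = e'$. Then writing $e' = e \cdot e'$ exhibits $e'$ as a multiple of $e$, so $e \mid e'$ by definition. This requires no use of idempotency at all.

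For the ``only if'' direction, suppose $e \mid e'$, say $e' = e \cdot r$ for some $r \in R$. I would multiply both sides by $e$ and use commutativity together with $e^2 = e$: this gives $e \cdot e' = e \cdot e \cdot r = e^2 \cdot r = e \cdot r = e'$, as desired. Here the only property of $e$ that is used is that it is idempotent; $e'$ need not even be idempotent for this direction, though the lemma is stated for the symmetric situation in which it will be applied.

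\textbf{Main obstacle.} There is essentially none — the content is a one-line manipulation in each direction. The only thing to be mildly careful about is to invoke commutativity of $R$ explicitly when rearranging $e \cdot e \cdot r$, and to phrase ``divides'' in the elementary sense (existence of a factor in $R$) rather than, say, in terms of principal ideals, so that the proof reads cleanly at the point where the lemma is later used to reduce the divisibility condition of Theorem~\ref{thm alg preservation} to an equation of idempotents.
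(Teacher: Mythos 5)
Your proof is correct and is essentially the paper's argument: the explicit computation $e\cdot(er)=e^2r=er=e'$ is just the unpacked form of the paper's remark that multiplication by $e$ is the projection onto the summand $eR$, and the converse is trivial in both. Nothing is missing.
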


\begin{proof}
Assume that $e$ divides $e'$. Then $e' \in eR$, hence $e \cdot e' = e'$, since multiplication by $e$ is
projection onto the idempotent summand $eR$ of $R$. The other direction is obvious.
\end{proof}

%

\begin{proof}[Proof of Theorem~\ref{thm rep splitting norms}]
We only need to show the equivalence (c) $\Leftrightarrow$ (d).
By Theorem~\ref{thm alg preservation} and Lemma~\ref{lemma little trick}, the statement (c) (respectively (d))
holds if and only if the equation
\[ N^H_K R^G_K(x) \cdot R^G_H(x) = R^G_H(x) \]
holds in $\AP{H}$ for $x = e_C$ (respectively in $\RXP{H}$ for $x = \lin(e_C)$). The linearization map $\lin
\colon \AP{-} \to \RXP{-}$ is a map of Tambara functors, hence preserves norms, restrictions and multiplication. By
Lemma~\ref{lemma kernel lin}, $\lin$ is injective on the ideal summand $e_{cyc} \cdot \AP{G}$ and that summand
contains the element $e_C$. It follows that the above equation holds for $x = e_C$ if and only if it holds
for $x = \lin(e_C)$.
\end{proof}

We can use the language of incomplete Tambara functors \cite{BH:ITF} to describe the algebraic structure of
$\Loc{\RXP{G}}{e_C}$ in terms of certain indexing systems.

\begin{prop}[\cite{boehme:mult-idempot}, Prop.~4.16] \label{prop indexing system IL}
Let $L \leq G$ be $\Primes$-perfect.
There is an indexing system $\IL$ given as follows:
for all $H \leq G$, $\IL(H)$ is the full subcategory of finite $H$-sets spanned by
all coproducts of the orbits $H/K$ such that
the groups $K \leq H \leq G$ satisfy
the following condition:
Any subgroup $L' \leq H$ conjugate in $G$ to $L$ lies in $K$.
\end{prop}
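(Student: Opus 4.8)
The plan is to verify directly that the data described in the statement—assigning to each $H \leq G$ the full subcategory $\IL(H)$ of finite $H$-sets generated under coproducts by those orbits $H/K$ for which every $G$-conjugate of $L$ that lies in $H$ is contained in $K$—satisfies the axioms of an indexing system in the sense of Blumberg–Hill. Recall that an indexing system is a sub-coefficient-system of the coefficient system $H \mapsto \{\text{finite } H\text{-sets}\}$ which is closed under finite coproducts, under the "self-induction" (subobjects and internal hom with respect to admissible sets), and contains all trivial $H$-sets. The subcategory $\IL(H)$ is closed under coproducts by construction and clearly contains all $H$-sets with trivial stabilizer (for $K = H$ the condition is vacuous unless $L' = H$, and in any case $H/H$ and finite trivial sets are always admissible), so those axioms are immediate. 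The content is in the two remaining closure conditions: closure under restriction along subgroup inclusions, and closure under the "Mackey double coset" operation that makes the collection of admissible sets closed under composition.

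First I would reformulate the defining condition more symmetrically. For a subgroup inclusion $K \leq H \leq G$, say the pair is \emph{$L$-admissible} if every $G$-conjugate of $L$ contained in $H$ is already contained in $K$; equivalently, writing $\cL_G := \{\,{}^g L : g \in G\,\}$ for the conjugacy class of $L$ in $G$, the pair is $L$-admissible iff $\{\,M \in \cL_G : M \leq H\,\} = \{\,M \in \cL_G : M \leq K\,\}$, i.e. passing from $K$ to $H$ adds no new conjugates of $L$. Then $\IL(H)$ consists of the coproducts of orbits $H/K$ with $(K,H)$ $L$-admissible. The key technical point is:

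\emph{Restriction closure.} Given $(K,H)$ $L$-admissible and a subgroup $J \leq H$, the restricted $H$-set $\Res^H_J(H/K) = \coprod_{JgK} J/(J \cap {}^g K)$ lies in $\IL(J)$; that is, each pair $(J \cap {}^g K,\, J)$ is $L$-admissible. Here I would argue: any $G$-conjugate $M$ of $L$ with $M \leq J$ satisfies $M \leq H$, and after replacing $M$ by the $J$-conjugate $M^{g^{-1} \cdot g} = g^{-1} M g \cdot(\ldots)$—more carefully, conjugating inside $J$ we may move $M$ to meet the double coset—one deduces $M \leq {}^g K$ from $L$-admissibility of $(K,H)$ applied to the appropriate $G$-conjugate; hence $M \leq J \cap {}^g K$. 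Then \emph{composition closure}: if $(K,H)$ is $L$-admissible and, for an orbit of the restriction, $(M_0, J)$ is $L$-admissible with $M_0 \leq {}^g K$, one must check the composite pair is $L$-admissible — this is a transitivity statement on the relation "adds no new conjugates of $L$", which follows formally once restriction closure is known, using that the conjugates of $L$ below a group are detected on the orbit types appearing in restrictions. These are the two places where the clean bookkeeping of conjugacy classes does all the work.

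The main obstacle is the restriction-closure step: one has to handle the double-coset decomposition carefully, keeping track of which $G$-conjugates of $L$ survive into each summand $J/(J \cap {}^g K)$ and showing the condition is inherited, since a priori a conjugate of $L$ sitting in $J$ need not obviously sit inside the smaller group $J \cap {}^g K$ without invoking the hypothesis on $(K,H)$. Everything else—coproduct closure, containing trivial sets, and deducing composition-closure from restriction-closure—is formal. Once the three indexing-system axioms are checked, the proposition is proved; I would remark that this is exactly the indexing system already identified in \cite[Prop.~4.16]{boehme:mult-idempot} for the Burnside-ring splitting, so in fact one may simply cite that result, the point of restating it here being that it governs the representation-ring splitting too via the equivalence (c)$\Leftrightarrow$(d) of Theorem~\ref{thm rep splitting norms}.
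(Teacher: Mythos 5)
Your overall route---verifying the Blumberg--Hill axioms for $\IL$ directly---is sound, and it is worth noting that the paper itself offers no argument here at all: the proposition is simply imported from the prequel \cite{boehme:mult-idempot}, so your closing remark that one may just cite Prop.~4.16 there is exactly what the paper does. Two points in your verification need repair, however. First, your recollection of the axioms is off: an indexing system must be closed under restriction, finite coproducts, subobjects, cartesian products and self-induction, and must contain all trivial sets; there is no ``internal hom'' condition, and as written your sketch never checks closure under cartesian products. That check is easy by the same double-coset argument you use for restriction: $H/K\times H/K'$ decomposes into orbits $H/(K\cap{}^{g}K')$ with $g\in H$, and if $L'\leq H$ is conjugate in $G$ to $L$, then $L'\leq K$ by admissibility of $(K,H)$, while ${}^{g^{-1}}L'\leq H$ is again $G$-conjugate to $L$ and hence lies in $K'$, so $L'\leq K\cap{}^{g}K'$. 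Second, the restriction step is garbled as stated: in $\Res^H_J(H/K)=\coprod_{J\backslash H/K} J/(J\cap{}^{g}K)$ the representative $g$ lies in $H$, not in $J$, so there is no ``$J$-conjugate'' to invoke; the correct bookkeeping is that for $M\leq J$ conjugate in $G$ to $L$, the group ${}^{g^{-1}}M$ is still contained in $H$ and still $G$-conjugate to $L$, hence lies in $K$ by admissibility of $(K,H)$, which gives $M\leq J\cap{}^{g}K$ as required. Finally, your ``composition closure'' (self-induction) does not need restriction closure at all: if every $G$-conjugate of $L$ contained in $H$ lies in $K$, and every one contained in $K$ lies in $M$, then every one contained in $H$ lies in $M$---plain transitivity, since $\Ind_K^H(K/M)=H/M$. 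With these repairs the verification is complete and agrees with the result being cited.
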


Note that this condition on $H/K$ is a generalized version
of condition (e) of Theorem~\ref{intro summary thm}
with $C$ and $C'$ replaced by $\Primes$-perfect subgroups $L$ and $L'$ that are not necessarily cyclic.

\begin{thm} \label{thm rep splitting ITF}
Let $C \leq G$ be a cyclic $\Primes$-perfect subgroup, and denote by $\RXP{-}$ one of the Tambara functors
$\RUP{-}$ or $\ROP{-}$. Then the following hold:
\begin{enumerate}[i)]
  \item The Green functor $\Loc{\RXP{-}}{e_C}$ admits the structure of an $\IC$-Tambara functor under
  $\RXP{-}$.
  \item The indexing system $\IC$ is maximal among the indexing systems that satisfy i).
  \item The canonical map $\RXP{-} \to \Loc{\RXP{-}}{e_C}$ is an $e_C$-localization in the
  category of $\IC$-Tambara functors. \qed
\end{enumerate}
\end{thm}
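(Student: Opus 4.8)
The plan is to deduce all three parts at once from Theorem~\ref{thm alg preservation} together with the equivalence of statements (d) and (e) in Theorem~\ref{thm rep splitting norms}, exploiting that the relevant element is idempotent. Throughout I write $e_C$ also for its image $\lin(e_C) \in \RXP{G}$, the primitive idempotent of $\RXP{G}$ indexed by $C$ (Corollary~\ref{cor classification in terms of cyclic subgroups}; for the real case see also Corollary~\ref{intro cor subrings}). First I would observe that, since $e_C$ is idempotent while restrictions are ring maps and norms are multiplicative, each of the elements $R^G_H(e_C)$ and $N^H_K R^G_K(e_C)$ is again idempotent; hence by Lemma~\ref{lemma little trick} the divisibility hypothesis of Theorem~\ref{thm alg preservation} for an orbit $H/K$ --- that $N^H_K R^G_K(e_C)$ divides a power of $R^G_H(e_C)$ --- is equivalent to the single equation $N^H_K R^G_K(e_C) \cdot R^G_H(e_C) = R^G_H(e_C)$ in $\RXP{H}$, and this equation for a pair $K \leq H$ is precisely the reformulation of statement (d) used in the proof of Theorem~\ref{thm rep splitting norms}. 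I would also recall that, by Proposition~\ref{prop indexing system IL} applied to $L = C$, the orbits $H/K$ admissible for $\IC$ are exactly those satisfying condition (e).

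For parts i) and iii): given any orbit $H/K$ admissible for $\IC$, condition (e) holds by the definition of $\IC$, hence statement (d) holds by Theorem~\ref{thm rep splitting norms}, hence the divisibility equation of the previous paragraph holds. Since $\RXP{-}$ is a Tambara functor (Example~\ref{example TFs}), it is in particular an $\IC$-Tambara functor, so Theorem~\ref{thm alg preservation} applies to the pair $(\RXP{-}, e_C)$ and yields that the orbit-wise localization $\Loc{\RXP{-}}{e_C}$ is a localization of $\RXP{-}$ at $e_C$ in the category of $\IC$-Tambara functors. In particular $\Loc{\RXP{-}}{e_C}$ carries an $\IC$-Tambara functor structure under $\RXP{-}$ --- this is i) --- and the canonical map $\RXP{-} \to \Loc{\RXP{-}}{e_C}$ is an $e_C$-localization in that category --- this is iii).

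For part ii): let $\cI$ be an indexing system such that $\Loc{\RXP{-}}{e_C}$ admits an $\cI$-Tambara functor structure under $\RXP{-}$, and let $H/K$ be admissible for $\cI$. Unwinding the definitions, this structure supplies a norm $\tilde N^H_K$ on $\Loc{\RXP{-}}{e_C}$ compatible with the norm $N^H_K$ of $\RXP{-}$; that is, statement (d) of Theorem~\ref{thm rep splitting norms} holds for the pair $K \leq H$, so condition (e) holds, so $H/K$ is admissible for $\IC$. Since an indexing system is determined by its set of admissible orbits, $\cI \subseteq \IC$; combined with part i), this exhibits $\IC$ as the maximum among the indexing systems satisfying i).

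The step I expect to be the main obstacle is the bookkeeping in ii): one must be sure that the bare existence of an $\cI$-Tambara structure on $\Loc{\RXP{-}}{e_C}$ compatible with $\RXP{-}$ genuinely forces statement (d), rather than some weaker divisibility surviving only after a further localization. This turns out to be immediate, since a norm of such a structure over $\RXP{-}$ is by definition a well-defined descended norm $\tilde N^H_K$, which is exactly condition (d); but it is the point at which one has to be careful about what ``admits a structure under $\RXP{-}$'' means. As a cross-check and an alternative route, the whole statement can instead be transported from the Burnside-ring version in the prequel \cite{boehme:mult-idempot} along the Tambara functor map $\lin$, which by Lemma~\ref{lemma kernel lin} is injective on the summand $e_{cyc} \cdot \AP{-}$ that contains $e_C$ and therefore carries the relevant division equations back and forth --- the same device already used in the proof of Theorem~\ref{thm rep splitting norms}.
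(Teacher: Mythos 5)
Your proposal is correct and follows exactly the route the paper intends: the paper states Theorem~\ref{thm rep splitting ITF} with no separate argument precisely because, as you show, parts i) and iii) are the conclusion of Theorem~\ref{thm alg preservation} (via Lemma~\ref{lemma little trick} and the equivalence (d)$\Leftrightarrow$(e) of Theorem~\ref{thm rep splitting norms}) applied to the admissible sets of $\IC$ from Proposition~\ref{prop indexing system IL}, while ii) follows by running (d)$\Rightarrow$(e) for any indexing system whose norms descend. Your handling of the ``under $\RXP{-}$'' point in ii) is exactly the right care to take, so there is nothing to add.
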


We record two easy consequences of our characterization of norm maps in the idempotent summands.

\begin{cor} \label{cor alg principal block}
The summand $\Loc{\RXP{-}}{e_C}$ is a Tambara functor (i.e., has a complete set of norms) if and only if
$C$ is the trivial group. \qed
\end{cor}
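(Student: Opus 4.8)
The plan is to deduce this from Corollary~\ref{cor alg principal block} together with the equivalence of the algebraic statement (c) (or (d)) with the group-theoretic statement (e) from Theorem~\ref{thm rep splitting norms}, and the identity $\IC$-Tambara functor structure established in Theorem~\ref{thm rep splitting ITF}. The point is that ``$\Loc{\RXP{-}}{e_C}$ is a Tambara functor'' should be interpreted as: it admits \emph{all} norm maps $N_K^H$ for $K \leq H \leq G$ compatibly under $\RXP{-}$, equivalently the maximal indexing system $\IC$ from Proposition~\ref{prop indexing system IL} is the complete indexing system (all finite $H$-sets admissible for every $H$). So I would first unwind what the complete indexing system means in terms of the defining condition of $\IL$.

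First I would note that, by Theorem~\ref{thm rep splitting ITF}(i)--(ii), the summand $\Loc{\RXP{-}}{e_C}$ is a Tambara functor precisely when $\IC$ is the complete indexing system, i.e.\ when for every $H \leq G$ and every subgroup $K \leq H$ the orbit $H/K$ is admissible. By the description in Proposition~\ref{prop indexing system IL}, $H/K \in \IC(H)$ if and only if every subgroup $C' \leq H$ that is $G$-conjugate to $C$ is contained in $K$. Taking $H = G$ and $K = 1$ (the trivial subgroup), admissibility of $G/1$ forces every $G$-conjugate of $C$ to be contained in $1$, i.e.\ $C = 1$. Conversely, if $C = 1$, then the condition ``every $G$-conjugate of $C$ lies in $K$'' holds vacuously for all $K \leq H \leq G$, so $\IC$ is the complete indexing system and $\Loc{\RXP{-}}{e_C}$ is a genuine Tambara functor — indeed for $C = 1$ the idempotent $e_1 = 1$ and the localization is just $\RXP{-}$ itself, which is a Tambara functor by Example~\ref{example TFs}.

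There is essentially no hard part here: the statement is an immediate reading-off of Theorem~\ref{thm rep splitting ITF} and the explicit form of the indexing system $\IC$. The only point requiring a word of care is the direction ``$C = 1 \Rightarrow$ Tambara functor'': one should observe that $e_1$ is the unit idempotent so that the localization is trivial, rather than invoking maximality of $\IC$, since maximality alone only says $\IC$ is the largest indexing system satisfying (i), not that it is complete. With that remark the proof is complete, and the same argument applies verbatim with $\RXP{-}$ replaced by $\ROP{-}$, since Theorem~\ref{thm rep splitting ITF} covers both cases.
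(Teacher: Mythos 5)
Your main line of argument is correct and is essentially the proof the paper intends (the corollary is stated with a \qed as an immediate consequence of Theorem~\ref{thm rep splitting norms} and Theorem~\ref{thm rep splitting ITF}): the norms inherited from $\RXP{-}$ are governed by condition (e), so a complete set of norms applied to $K=1\leq H=G$ forces $C=1$; conversely, for $C=1$ condition (e) (equivalently the defining condition of $\IC$ in Proposition~\ref{prop indexing system IL}) holds for every pair $K\leq H\leq G$, so $\IC$ is the complete indexing system and Theorem~\ref{thm rep splitting ITF}(i) gives the full Tambara structure. (Two small slips: the condition holds \emph{trivially} for $C=1$ — the trivial subgroup lies in every $K$ — not vacuously; and your opening sentence cites the corollary being proved, where you presumably meant Theorem~\ref{thm rep splitting norms}.)

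The genuine error is in your ``word of care.'' It is false in general that $e_1$ (i.e.\ $\lin(e_1)$) is the unit of $\RXP{G}$: the idempotents $\lin(e_C)$, as $C$ ranges over conjugacy classes of cyclic $\Primes$-perfect subgroups, are mutually orthogonal and sum to $1$ (Corollary~\ref{cor characters of burnside images}), so $\lin(e_1)=1$ only when the trivial group is the only cyclic $\Primes$-perfect subgroup (for instance, rationally any nontrivial $G$ has several such idempotents, and the $e_1$-summand — the principal block — is a proper, nonzero summand). Hence the localization at $e_1$ is not $\RXP{-}$ itself, and Example~\ref{example TFs} cannot be invoked that way. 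Fortunately the observation is also unnecessary: the direction ``$C=1\Rightarrow$ complete norms'' does not rest on the maximality statement (ii) at all, but only on part (i) of Theorem~\ref{thm rep splitting ITF} together with the fact that $\IC$ is the complete indexing system when $C=1$; equivalently, apply (e)$\Rightarrow$(d) of Theorem~\ref{thm rep splitting norms} for every $K\leq H$. Your own first sentence of the converse already says exactly this, so simply delete the claim that $e_1$ is the unit idempotent and the proof stands.
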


\begin{cor} \label{cor alg normal}
The subgroup $C$ is normal in $G$ if and only if the summand $\Loc{\RXP{-}}{e_C}$ admits all norms of the form
$\tilde{N}_K^H$ such that $K$ contains a subgroup conjugate in $G$ to $C$. \qed
\end{cor}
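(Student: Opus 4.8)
\textbf{Plan for the proof of Corollary~\ref{cor alg normal}.}
The plan is to read off both implications directly from the description of the indexing system $\IC$ in Proposition~\ref{prop indexing system IL} together with the maximality statement in part ii) of Theorem~\ref{thm rep splitting ITF}, which together say that $\Loc{\RXP{-}}{e_C}$ admits precisely the norm $\tilde{N}_K^H$ when $H/K \in \IC(H)$, i.e.\ when every subgroup $C' \leq H$ conjugate in $G$ to $C$ lies in $K$ (this is also condition (e) of Theorem~\ref{intro summary thm}). So the statement to prove unwinds to the purely group-theoretic equivalence: $C \trianglelefteq G$ if and only if for every nested pair $K \leq H \leq G$ such that $K$ contains some $G$-conjugate of $C$, in fact \emph{every} $G$-conjugate of $C$ contained in $H$ already lies in $K$.

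For the forward direction, assume $C \trianglelefteq G$. Then $C$ is the unique subgroup of $G$ conjugate to $C$, so in particular for any $H \leq G$ there is at most one subgroup of $H$ conjugate in $G$ to $C$, namely $C$ itself (and only when $C \leq H$). Hence if $K \leq H$ and $K$ contains a $G$-conjugate of $C$, that conjugate must be $C$, so $C \leq K$, and there is nothing else to contain; thus $H/K \in \IC(H)$ and $\tilde{N}_K^H$ exists. For the reverse direction, I would argue contrapositively: if $C$ is not normal, pick $g \in G$ with $C^g \neq C$ and set $H := \langle C, C^g \rangle$, $K := C$. Then $K$ contains a $G$-conjugate of $C$ (namely $C$), but $C^g \leq H$ is a $G$-conjugate of $C$ not contained in $K = C$, so $H/K \notin \IC(H)$ and the norm $\tilde{N}_K^H$ does \emph{not} exist; this witnesses the failure of the right-hand condition.

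I do not anticipate a serious obstacle here: the corollary is a formal consequence of Theorem~\ref{thm rep splitting ITF} and of the explicit condition defining $\IC$. The only point requiring a little care is the precise quantifier structure of the right-hand side of the corollary — ``admits all norms of the form $\tilde N_K^H$ such that $K$ contains a subgroup conjugate in $G$ to $C$'' — and matching it against the condition ``every $G$-conjugate of $C$ lying in $H$ is contained in $K$''; the witness $(H,K) = (\langle C, C^g\rangle, C)$ in the contrapositive is exactly what bridges the two, since there $K$ does contain a conjugate of $C$ yet the norm fails to exist.
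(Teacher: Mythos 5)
Your proposal is correct and is exactly the argument the paper intends (the corollary is left as immediate from Theorem~\ref{thm rep splitting norms}/condition (e)): the forward direction because a normal $C$ is its only $G$-conjugate, and the reverse via the witness $(H,K)=(\langle C, C^g\rangle, C)$ with $C^g\neq C$, where $C^g\not\leq C$ since conjugate subgroups have equal order. No gaps.
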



\subsection{Multiplicativity of the idempotent splittings} \label{subsect:alg mult splitting}
We can now describe the multiplicativity of the idempotent splitting of $\RXP{-}$ in terms of the
indexing system
\[ \Icyc := \bigcap_{(C)} \IC \]
arising as the intersection of the indexing systems $\IC$ defined in Prop.~\ref{prop indexing system IL}.

\begin{prop} \label{prop alg splitting}
The localization maps $\RXP{-} \to \Loc{\RXP{-}}{e_C}$ assemble into an isomorphism of $\Icyc$-Tambara
functors
\[ \RXP{-} \to \prod_{(C) \leq G} \Loc{\RXP{-}}{e_C} \]
where the product is taken over conjugacy classes of cyclic $\Primes$-perfect subgroups.
Moreover, $\Icyc$ is maximal among all indexing sets with this property.
\end{prop}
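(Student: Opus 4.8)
The plan is to bootstrap from the underlying Green functor isomorphism up to an isomorphism of $\Icyc$-Tambara functors by means of Theorem~\ref{thm rep splitting ITF}, and then to deduce the maximality of $\Icyc$ from the maximality of the individual indexing systems $\IC$. First I would record the underlying isomorphism: by Corollary~\ref{cor characters of burnside images} (equivalently Theorem~\ref{intro thm idempotents RU}), the elements $\lin(e_C)$, with $(C)$ ranging over the conjugacy classes of cyclic $\Primes$-perfect subgroups, form a complete set of mutually orthogonal idempotents in $\RXP{G}$; restricting to any subgroup $H \leq G$ gives the analogous statement at level $H$, and since the localization maps $\RXP{-} \to \Loc{\RXP{-}}{e_C}$ are maps of Mackey functors these levelwise splittings are compatible with restrictions and transfers. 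Hence the localization maps assemble into an isomorphism of Green functors $\RXP{-} \cong \prod_{(C)} \Loc{\RXP{-}}{e_C}$.

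Next I would upgrade this to an isomorphism of $\Icyc$-Tambara functors. By Theorem~\ref{thm rep splitting ITF}(i), each factor $\Loc{\RXP{-}}{e_C}$ is an $\IC$-Tambara functor under $\RXP{-}$; since $\Icyc = \bigcap_{(C)} \IC$ is a sub-indexing system of each $\IC$, restricting the norm structure along this inclusion makes every factor an $\Icyc$-Tambara functor, and the localization map $\RXP{-} \to \Loc{\RXP{-}}{e_C}$ becomes a map of $\Icyc$-Tambara functors (it is one of $\IC$-Tambara functors by Theorem~\ref{thm rep splitting ITF}(iii)). Products in the category of $\Icyc$-Tambara functors are formed levelwise with norms acting componentwise, so $\prod_{(C)} \Loc{\RXP{-}}{e_C}$ carries a canonical $\Icyc$-Tambara structure and the Green functor isomorphism above is a map of $\Icyc$-Tambara functors. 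Being levelwise a ring isomorphism, its levelwise inverse is again a ring map commuting with restrictions and transfers, and it also commutes with all norms indexed by $\Icyc$: from $f_H \circ N_K^H = N_K^H \circ f_K$ one obtains $N_K^H \circ f_K^{-1} = f_H^{-1} \circ N_K^H$. Thus it is an isomorphism of $\Icyc$-Tambara functors.

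For the maximality clause I would take an arbitrary indexing system $\cI$ for which the localization maps assemble into an isomorphism of $\cI$-Tambara functors $\RXP{-} \cong \prod_{(C)} \Loc{\RXP{-}}{e_C}$. Composing with the projection onto the factor indexed by $(C)$ — which is a map of $\cI$-Tambara functors because the product is taken levelwise with componentwise norms — exhibits $\Loc{\RXP{-}}{e_C}$ as an $\cI$-Tambara functor under $\RXP{-}$, so the maximality part Theorem~\ref{thm rep splitting ITF}(ii) forces $\cI \subseteq \IC$. As this holds for every conjugacy class $(C)$ of cyclic $\Primes$-perfect subgroups, we conclude $\cI \subseteq \bigcap_{(C)} \IC = \Icyc$, which is the asserted maximality.

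The only point requiring genuine care is the formal bookkeeping about products in the category of $\cI$-Tambara functors — that they are created levelwise with componentwise norms, so that a $\cI$-Tambara structure on the product amounts to one on each factor, and that projections and levelwise-bijective maps automatically preserve norms; all the substantive content has already been established in Theorems~\ref{thm rep splitting norms}, \ref{thm alg preservation} and \ref{thm rep splitting ITF}. If one prefers to bypass Theorem~\ref{thm rep splitting ITF}(ii), an equivalent route to maximality is the direct one: by Theorem~\ref{thm alg preservation} and Lemma~\ref{lemma little trick}, an admissible set $H/K$ of $\cI$ yields a well-defined norm on all factors exactly when $N^H_K R^G_K(\lin(e_C)) \cdot R^G_H(\lin(e_C)) = R^G_H(\lin(e_C))$ holds for every $(C)$, which by (the proof of) Theorem~\ref{thm rep splitting norms} is precisely condition~(e), i.e.\ precisely the defining condition of $\IC(H)$ in Proposition~\ref{prop indexing system IL}; hence $\cI(H) \subseteq \IC(H)$ for all $H$ and all $(C)$, giving $\cI \subseteq \Icyc$.
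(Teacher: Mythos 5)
Your proposal is correct and follows essentially the same route as the paper's proof: both deduce that the product map is a map of $\Icyc$-Tambara functors from Theorem~\ref{thm rep splitting ITF} together with the levelwise splitting into the orthogonal idempotents $\lin(e_C)$, and both obtain maximality from the maximality clause of Theorem~\ref{thm rep splitting ITF} applied to each summand. Your write-up merely makes explicit the bookkeeping (levelwise products with componentwise norms, projections, inverse preserving norms) that the paper leaves implicit.
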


\begin{proof}
Each of the canonical maps $\RXP{-} \to \Loc{\RXP{-}}{e_C}$ is a map of $\IC$-Tambara functors by \ref{thm rep splitting ITF},
hence their product is a map of $\Icyc$-Tambara functors. It is a levelwise isomorphism by construction. The
maximality also follows from Theorem~\ref{thm rep splitting ITF}: it implies that $\Icyc$ is maximal among the
indexing systems $\cJ$ such that each summand $\Loc{\RXP{-}}{e_C}$ is a $\cJ$-Tambara functor.
\end{proof}

The admissible sets of $\Icyc$ can be characterized as follows.

\begin{lemma}[\cite{boehme:mult-idempot}, Lemma~4.23] \label{admissible sets of I}
Let $K \leq H \leq G$, then $H/K$ is an admissible set for $\Icyc$ if and only if for all cyclic $\Primes$-perfect
$C \leq H$, $C$ is contained in $K$.
\end{lemma}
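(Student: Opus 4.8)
The plan is to prove Lemma~\ref{admissible sets of I} by unwinding the definition of $\Icyc$ as the intersection $\bigcap_{(C)} \IC$ and translating the membership condition orbit by orbit. Since an indexing system is determined by its admissible orbits (and closed under coproducts), it suffices to check that a single orbit $H/K$ lies in $\Icyc(H)$ precisely under the stated condition. By definition of intersection of indexing systems, $H/K \in \Icyc(H)$ if and only if $H/K \in \IC(H)$ for every conjugacy class $(C)$ of cyclic $\Primes$-perfect subgroups of $G$. By Proposition~\ref{prop indexing system IL}, the latter says: for each such $C$, every subgroup $C' \leq H$ that is conjugate in $G$ to $C$ is contained in $K$.

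The next step is to observe that quantifying over all conjugacy classes $(C)$ of cyclic $\Primes$-perfect subgroups of $G$ and then over all $C' \leq H$ conjugate to $C$ is the same as quantifying directly over all cyclic $\Primes$-perfect subgroups $C' \leq H$: every cyclic $\Primes$-perfect $C' \leq H$ is certainly a cyclic $\Primes$-perfect subgroup of $G$, so it is conjugate (indeed equal) to the chosen representative of its own $G$-conjugacy class; conversely any $C'$ arising in the condition for some $(C)$ is itself cyclic and $\Primes$-perfect. Being $\Primes$-perfect is an intrinsic property of the abstract group $C'$, hence stable under conjugation, so there is no ambiguity. Therefore the conjunction over all $(C)$ of Proposition~\ref{prop indexing system IL}'s condition collapses to: every cyclic $\Primes$-perfect subgroup $C' \leq H$ is contained in $K$, which is exactly the assertion of the lemma. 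One should also note the degenerate consistency check that $K$ itself and its subconjugates are automatically allowed, matching the requirement that $H/H \in \Icyc(H)$ always.

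This argument is essentially a bookkeeping exercise once Proposition~\ref{prop indexing system IL} is in hand, so there is no serious obstacle; the only point requiring a little care is making sure the double quantifier ``over conjugacy classes $(C)$ in $G$, then over $G$-conjugates $C' \leq H$'' genuinely ranges over all cyclic $\Primes$-perfect subgroups of $H$ and nothing more — this uses that $\Primes$-perfectness and cyclicity are conjugation-invariant and that every subgroup of $H$ is a subgroup of $G$. Since the statement is quoted from \cite{boehme:mult-idempot}, I would keep the proof to a single short paragraph making exactly this translation, citing Proposition~\ref{prop indexing system IL} and the definition $\Icyc = \bigcap_{(C)} \IC$.
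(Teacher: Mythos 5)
Your proof is correct: unwinding $\Icyc = \bigcap_{(C)} \IC$ levelwise and applying Proposition~\ref{prop indexing system IL}, together with the observation that cyclicity and $\Primes$-perfectness are conjugation-invariant so the double quantification sweeps out exactly the cyclic $\Primes$-perfect subgroups of $H$, is precisely the intended bookkeeping argument. The paper itself gives no proof here (it only cites Lemma~4.23 of \cite{boehme:mult-idempot}), and your one-paragraph translation is exactly what that citation stands in for.
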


\section{Idempotent splittings of equivariant K-theory} \label{section:htpy}
Let $\KXG$ denote one of the genuine $G$-spectra $\KUG$ or $\KOG$, i.e., either complex or real equivariant
$K$-theory. We will determine the multiplicativity of the $\Primes$-local idempotent splitting
\[ \KXGP \simeq \prod_{(C)} \Loc{\KXGP}{e_C}, \]
i.e., we will explicitly describe the maximal $N_\infty$ algebra structure on each of the factors, as well as
the maximal $N_\infty$ algebra structure preserved by the splitting.
Recall that as a consequence of Theorem~\ref{intro thm idempotents
RU}, the blocks of $\KXGP$ are given as the $e_C$-localizations 
\[ \KXGP \wedge \Loc{\sphere_{(\Primes)}}{e_C} \]
of $\KXGP$ in the category of $G$-spectra.

\subsection{Preliminaries} \label{subsect:prelim}
The $N_\infty$ operads of \cite{BH:OperMult} structure $G$-equivariant ring spectra with incomplete sets of
norm maps parametrized by their associated indexing systems.
According to \cite[Thm.~4.7, Prop.~4.10]{GW}, any given indexing system can be realized as the indexing
system of a $\Sigma$-cofibrant\footnote{An operad $\cO$ in $G$-spaces is $\Sigma$-\emph{cofibrant} if each space $\cO(n)$
is of the homotopy type of a $(G \times \Sigma_n)$-CW complex.} $N_\infty$ operad. Similar existence results were given in
\cite[Thm.~2.16]{rubin:realization} and \cite[Cor.~IV]{bonventre-pereira}.

\begin{notation}
For each conjugacy class of cyclic $\Primes$-perfect subgroups $C \leq G$, let $\OC$ be a $\Sigma$-cofibrant
$N_\infty$ operad whose associated indexing system is $\IC$. Let $\Ocyc$ be a $\Sigma$-cofibrant $N_\infty$
operad whose associated indexing system is $\Icyc$.
\end{notation}

Note that by definition, an $N_\infty$ operad $\cP$ is a certain operad in the category of unbased $G$-spaces.
By the usual abuse of notation, we refer to an algebra over the operad $\Sigma^\infty_+ \cP$ in $G$-spectra as
a $\cP$-\emph{algebra}.

\begin{rem} \label{remark comm}
For any choice of the operad $\OC$, both $\sphere$ and $\KXG$ are naturally algebras over $\OC$: both spectra
can be modelled as strictly commutative monoids in orthogonal $G$-spectra, and hence admit an action by $\OC$
that factors through the action of the commutative operad.
\end{rem}


\subsection{Multiplicativity of the idempotent summands} \label{subsect:htpy mult summands}
We are now ready to state our main homotopical result.

\begin{thm} \label{thm htpy}
Let $C \leq G$ be a cyclic $\Primes$-perfect subgroup.
Then:
\begin{enumerate}[i)]
  \item The $G$-spectrum $\Loc{\KXGP}{e_C}$ is an $\OC$-algebra under $\KXGP$.
  \item The operad $\OC$ is maximal among the $N_\infty$-operads that satisfy i).
  \item The canonical map $\KXGP \to \Loc{\KXGP}{e_C}$ is an $e_C$-localization in
  the category of $\OC$-algebras in $G$-spectra.
\end{enumerate} 
\end{thm}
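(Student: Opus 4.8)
The plan is to deduce the homotopical statement from the algebraic result Theorem~\ref{thm rep splitting ITF} together with the Hill–Hopkins criterion for when an equivariant localization inherits norm maps, in the form provided by Hill–Hopkins \cite{HH:EqvarMultClosure}/Blumberg–Hill \cite{BH:OperMult} (cf.\ the translation in Proposition~\ref{prop translation White HH}). First I would recall that since both $\sphere_{(\Primes)}$ and $\KXGP$ are modelled by strictly commutative orthogonal $G$-ring spectra, they are $\OC$-algebras for \emph{every} $N_\infty$ operad $\OC$ (Remark~\ref{remark comm}); in particular the question is genuinely about which norms \emph{survive} localization. The key input is the identification, coming from Theorem~\ref{intro thm idempotents RU}, of the block $\Loc{\KXGP}{e_C}$ with the smashing localization $\KXGP \wedge \Loc{\sphere_{(\Primes)}}{e_C}$, equivalently with the $e_C$-localization of $\KXGP$ in genuine $G$-spectra where $e_C \in \pi_0^G(\sphere_{(\Primes)}) \cong \AP{G}$ acts via the unit $\AP{G} \to \pi_0^G(\KXGP) \cong \RUP{G}$, sending $e_C \mapsto \lin(e_C)$.

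For part i), I would invoke the criterion: the $e_C$-localization of an $\OC$-algebra $R$ again admits the norm maps indexed by $\OC$ provided that, for every admissible set $H/K$ of $\IC$, the element $N_K^H R_K^G(e_C)$ divides a power of $R_H^G(e_C)$ in $\pi_0^H(R)$; this is exactly the hypothesis of Theorem~\ref{thm alg preservation} applied to the homotopy Tambara functor $\underline{\pi}_0(R)$ (using Brun's theorem, Example~\ref{example TFs}). For $R = \KXGP$ this homotopy Mackey functor is $\RXP{-}$, and the required divisibility relation for $x = \lin(e_C)$ is precisely what Theorem~\ref{thm rep splitting ITF}(i) (equivalently statement (d), via Lemma~\ref{lemma little trick}) asserts holds for all admissible $H/K$ of $\IC$. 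Hence $\Loc{\KXGP}{e_C}$ is an $\OC$-algebra under $\KXGP$, with the localization map a map of $\OC$-algebras, giving part iii) as well once one checks the universal property: any $\OC$-algebra map out of $\KXGP$ on which $e_C$ acts invertibly factors uniquely through the localization, because the underlying localization in $G$-spectra is smashing and the $\OC$-algebra structure on the target is determined by that on $\KXGP$ together with the surviving norms — this is the content of the ``localization in the category of $\OC$-algebras'' clause, and matches Theorem~\ref{thm rep splitting ITF}(iii) on $\underline{\pi}_0$.

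For part ii), maximality, I would argue contrapositively: if $\cP$ is an $N_\infty$ operad with indexing system $\cJ$ strictly larger than $\IC$, pick an admissible set $H/K$ of $\cJ$ that is \emph{not} admissible for $\IC$. By Theorem~\ref{thm rep splitting ITF}(ii) the corresponding divisibility relation fails for $\lin(e_C)$ in $\RXP{H}$, i.e.\ $N_K^H R_K^G(\lin(e_C)) \cdot R_H^G(\lin(e_C)) \neq R_H^G(\lin(e_C))$; since $\underline{\pi}_0(\Loc{\KXGP}{e_C}) = \Loc{\RXP{-}}{e_C}$, a $\cP$-algebra norm $\tilde N_K^H$ on $\Loc{\KXGP}{e_C}$ would induce one on this Tambara functor, contradicting the failure of the equation (the norm on homotopy groups would be forced to be the descent of $N_K^H$ since the localization map is surjective on $\pi_0^K$, hence epi as a map of Green functors). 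The main obstacle I anticipate is the bookkeeping in part iii): one must be careful that ``$e_C$-localization in $\OC$-algebras'' is the right universal statement and that it follows formally from the smashing property plus the algebraic statement — in particular that no higher-homotopical obstruction intervenes — but this should be handled exactly as the corresponding step for the sphere in \cite{boehme:mult-idempot}, to which the whole situation reduces via the smash-product identification.
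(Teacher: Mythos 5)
Your proposal is correct and takes essentially the same route as the paper: part i) comes from the norm-divisibility criterion checked on the admissible sets of $\IC$, part ii) from the maximality clause of Theorem~\ref{thm rep splitting ITF} applied to the norms induced on $\underline{\pi}_0$ by a hypothetically larger $N_\infty$ structure under $\KXGP$, and part iii) from the observation that the map is simultaneously an $e_C$-localization of underlying $G$-spectra and a map of $\OC$-algebras. The only deviation is one of bookkeeping: you verify the divisibility for $\lin(e_C)$ in $\pi_0^H(\KXGP) \cong \RXP{H}$ (condition (d), in Hill--Hopkins element-level form), whereas Proposition~\ref{prop translation White HH} as stated requires it in $\pi_0^H(\sphere_{(\Primes)}) \cong \AP{H}$, i.e.\ condition (c); since Theorem~\ref{thm rep splitting norms} proves (c) $\Leftrightarrow$ (d), either formulation yields the conclusion, but the literal citation should be to (c).
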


The key to the proof is the following preservation result for $N_\infty$ algebras given in
\cite{boehme:mult-idempot}. It extends previous work of Hill and Hopkins \cite{HH:EqvarMultClosure} and
uses a result of Guti\'errez and White \cite[Cor.~7.10]{GW}.

\begin{prop}[\cite{boehme:mult-idempot}, Prop.~2.32] \label{prop translation White HH}
Let $\cP$ be a $\Sigma$-cofibrant $N_\infty$ operad. Fix $x \in \pi_0^G(\sphere_{(\Primes)})$. Then the
Bousfield localization $L_{x}$ given by smashing with
\[ \Loc{\sphere_{(\Primes)}}{x} =
\hocolim \left( \sphere_{(\Primes)} \stackrel{x}{\longrightarrow} \sphere_{(\Primes)}
\stackrel{x}{\longrightarrow} \ldots \right) \]
preserves\footnote{ in the sense of \cite[Def.~7.3]{GW}} $\cP$-algebras in $\Primes$-local $G$-spectra if
and only if for all $H \leq G$ and all transitive admissible $H$-sets $H/K$, the element
$N_K^H R^G_K (x)$
divides a power of $R^G_H(x)$ in the ring $\pi_0^H(\sphere_{(\Primes)})$.
\end{prop}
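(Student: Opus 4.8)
The plan is to deduce the proposition from the general preservation theorem of Guti\'errez and White. Working in the symmetric monoidal model category of $\Primes$-local orthogonal $G$-spectra, the localization $L_x$ given by smashing with $\Loc{\sphere_{(\Primes)}}{x}$ is a \emph{smashing} (in particular monoidal) left Bousfield localization, so \cite[Cor.~7.10]{GW} (via the compatibility notion of \cite[Def.~7.3]{GW}) applies: for a $\Sigma$-cofibrant operad $\cP$, the functor $L_x$ preserves $\cP$-algebras if and only if each symmetric power $X \mapsto \cP(n)_+ \wedge_{\Sigma_n} X^{\wedge n}$ carries $L_x$-equivalences between cofibrant $G$-spectra to $L_x$-equivalences, equivalently the natural map
\[ \cP(n)_+ \wedge_{\Sigma_n} X^{\wedge n} \longrightarrow \cP(n)_+ \wedge_{\Sigma_n} (L_x X)^{\wedge n} \]
is an $L_x$-equivalence for every cofibrant $X$ and every $n \geq 0$. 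The whole task is to translate this, for an $N_\infty$ operad, into the stated divisibility in Burnside rings.

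First I would invoke the defining property of an $N_\infty$ operad: $\cP(n)$ is a universal space for the family of graph subgroups of $G \times \Sigma_n$ arising from the admissible $H$-sets of cardinality $n$ \cite{BH:OperMult}. Restricting to $H \leq G$ and decomposing this universal space cell by cell --- exactly as in Hill--Hopkins \cite{HH:EqvarMultClosure} (and Blumberg--Hill) --- equips $\cP(n)_+\wedge_{\Sigma_n}X^{\wedge n}$ with a finite filtration whose associated graded is assembled, by induction $G_+\wedge_H(-)$ and smashing, from the norm spectra $N_K^H(\Res^G_K X)$ indexed by the transitive admissible $H$-sets $H/K$ (and smash products thereof for the non-transitive admissible sets). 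Since $L_x$ is smashing it commutes with wedges, cofibre sequences, filtered homotopy colimits and induction; hence the displayed map is an $L_x$-equivalence for all cofibrant $X$ as soon as, for each transitive admissible $H$-set $H/K$, the map $N_K^H\Res^G_K(X \to L_x X)$ is an $L_x$-equivalence. Writing $L_x X \simeq X \wedge \Loc{\sphere_{(\Primes)}}{x}$, using that the norm $N_K^H \colon \mathrm{Sp}^K \to \mathrm{Sp}^H$ is symmetric monoidal, and noting that $L_x$-equivalences of induced spectra $G_+\wedge_H(-)$ are detected by $\Res^G_H\Loc{\sphere_{(\Primes)}}{x} \simeq \Loc{\Res^G_H\sphere_{(\Primes)}}{R^G_H(x)}$, this reduces to the single requirement: for every transitive admissible $H$-set $H/K$, the unit map
\[ \Res^G_H\sphere_{(\Primes)} \longrightarrow N_K^H\bigl(\Res^G_K\Loc{\sphere_{(\Primes)}}{x}\bigr) \]
becomes an equivalence after inverting $R^G_H(x)$.

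The remaining ingredient is a ``norm of a telescope'' computation. Restriction commutes with the homotopy colimit defining the localization, so $\Res^G_K\Loc{\sphere_{(\Primes)}}{x}$ is the telescope on $\Res^G_K\sphere_{(\Primes)} \stackrel{R^G_K(x)}{\longrightarrow} \Res^G_K\sphere_{(\Primes)} \to \cdots$. Applying $N_K^H$ and using the diagonal / geometric-fixed-point description of the norm of a filtered colimit due to Hill--Hopkins--Ravenel, one identifies $N_K^H(\Res^G_K\Loc{\sphere_{(\Primes)}}{x})$ as an $\Res^G_H\sphere_{(\Primes)}$-module whose localization at $R^G_H(x)$ is the unit $\Res^G_H\Loc{\sphere_{(\Primes)}}{x}$ precisely when $N_K^H R^G_K(x)$ becomes invertible after inverting $R^G_H(x)$ in $\pi_0^H(\sphere_{(\Primes)}) \cong \AP{H}$ --- that is, precisely when $N_K^H R^G_K(x)$ divides a power of $R^G_H(x)$. ($\Primes$-locality keeps the telescopes under control and ensures the additive-transfer terms appearing in the norm are absorbed by the localization.) Running this over all $n$ and all transitive admissible $H$-sets completes the ``if'' direction.

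For ``only if'' I would argue more softly. The spectrum $\sphere_{(\Primes)}$ is a strictly commutative monoid, hence a $\cP$-algebra for every $N_\infty$ operad $\cP$ (cf.\ Remark~\ref{remark comm}). If $L_x$ preserved $\cP$-algebras, then $\Loc{\sphere_{(\Primes)}}{x}$ would be a $\cP$-algebra, so by the $N_\infty$ refinement of Brun's theorem \cite{brun:eqvar-spectra}, \cite{BH:ITF} its homotopy Mackey functor $\underline{\pi}_0\Loc{\sphere_{(\Primes)}}{x}$ is an $\cI$-Tambara functor for the indexing system $\cI$ of $\cP$, hence carries a norm $N_K^H$ for each admissible $H/K$. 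As $R^G_K(x)$ is invertible in $\pi_0^K\Loc{\sphere_{(\Primes)}}{x}$, its norm $N_K^H R^G_K(x)$ is invertible in $\pi_0^H\Loc{\sphere_{(\Primes)}}{x} \cong \AP{H}[R^G_H(x)^{-1}]$, which is exactly the asserted divisibility. I expect the genuine obstacle to lie in the middle two paragraphs --- setting up the cellular filtration of $\cP(n)_+\wedge_{\Sigma_n}X^{\wedge n}$ and controlling how $N_K^H$ interacts with the homotopy colimit defining $\Loc{\sphere_{(\Primes)}}{x}$, including checking that the transfer corrections are harmless --- whereas the appeal to \cite{GW} and the Tambara-functor converse are comparatively formal.
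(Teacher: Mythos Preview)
The paper does not actually prove this proposition: it is quoted verbatim from the prequel \cite[Prop.~2.32]{boehme:mult-idempot}, and the only indication given here about its proof is the remark that it ``extends previous work of Hill and Hopkins \cite{HH:EqvarMultClosure} and uses a result of Guti\'errez and White \cite[Cor.~7.10]{GW}.'' So there is no in-paper argument to compare against.

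That said, your proposal lines up with those two ingredients. You invoke \cite[Cor.~7.10]{GW} to reduce preservation of $\cP$-algebras to a statement about the extended powers $\cP(n)_+ \wedge_{\Sigma_n} X^{\wedge n}$, then use the Hill--Hopkins cellular filtration coming from the universal-space description of $\cP(n)$ to break this into norms $N_K^H \Res^G_K(-)$ indexed by admissible $H/K$, and finally translate the resulting condition into the divisibility in $\AP{H}$. Your converse via the incomplete Tambara structure on $\underline{\pi}_0(\Loc{\sphere_{(\Primes)}}{x})$ is also the standard route. As a sketch this is sound and matches the strategy of the cited sources; the places you yourself flag as delicate---the precise form of the cell filtration and the behaviour of $N_K^H$ on the telescope defining $\Loc{\sphere_{(\Primes)}}{x}$---are exactly where the technical work in the original proof sits, but there is no missing idea.
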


\begin{proof}[Proof of Theorem~\ref{thm htpy}]
\emph{Ad i):} We know from Theorems~\ref{thm alg preservation} and~\ref{thm rep splitting norms} that for
each of the admissible sets of $\IC$, hence of $\OC$, the division relation of Prop.~\ref{prop translation
White HH} holds, so $\Loc{\KXGP}{e_C}$ is an $\OC$-algebra under $\KXGP$. \\
\emph{Ad ii):} Assume that $\cP$ is an element strictly greater than $\OC$ in the poset of (homotopy types
of) $N_\infty$ operads. Then any norm that comes from $\cP$ but not from $\OC$ induces a corresponding norm on
homotopy groups that does not correspond to an admissible set of $\IC$, thus contradicting the maximality
statement included in Theorem~\ref{thm rep splitting ITF}.
\\
\emph{Ad iii):} It is an $e_C$-localization in $G$-spectra and a map of $\OC$-algebras.
\end{proof}

We obtain the homotopical analogue of Corollary~\ref{cor alg principal block}, stated as Corollary~\ref{intro
cor principal block KU} in the introduction. There is also a homotopical version of Corollary~\ref{cor alg normal}:


\begin{cor}
The group $C$ is normal in $G$ if and only if $\Loc{\KXGP}{e_C}$ admits all norm maps of the form
$\tilde{N}_K^H$ such that $K$ and $H$ both contain a subgroup conjugate in $G$ to $C$. \qed
\end{cor}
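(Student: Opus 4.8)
The plan is to reduce the corollary to a purely group-theoretic statement about the indexing system $\IC$ via Theorem~\ref{thm htpy}, and then to run the same short argument that underlies Corollary~\ref{cor alg normal} on the algebraic side.

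First I would record, using Theorem~\ref{thm htpy}(i) and (ii), that $\Loc{\KXGP}{e_C}$ admits the norm $\tilde{N}_K^H$ if and only if $H/K$ is an admissible orbit of $\OC$, i.e.\ $H/K \in \IC$; by Proposition~\ref{prop indexing system IL} applied with $L = C$, this happens precisely when every subgroup of $H$ that is conjugate in $G$ to $C$ is already contained in $K$. I would also observe that, since $K \leq H$, the hypothesis ``$K$ and $H$ both contain a subgroup conjugate in $G$ to $C$'' is equivalent to the single condition ``$K$ contains a subgroup conjugate in $G$ to $C$'', so the statement to be proved is literally that of Corollary~\ref{cor alg normal} with $\RXP{-}$ replaced by $\KXGP$. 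One could therefore simply invoke that corollary together with the identification of admissible sets above; but I would also spell out the two implications directly, since the proof of Corollary~\ref{cor alg normal} was itself omitted.

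For the forward implication: if $C \trianglelefteq G$ then $C$ is its own unique $G$-conjugate, so ``$K$ contains a conjugate of $C$'' forces $C \leq K$, and then every subgroup of $H$ conjugate to $C$ equals $C$, which lies in $K$; hence $H/K \in \IC$ and $\tilde{N}_K^H$ exists. For the converse: if $C$ is not normal, choose a $G$-conjugate $C'$ of $C$ with $C' \neq C$ and take $H = G$, $K = C$. Then $K$ contains the conjugate $C$ of $C$, but $C' \leq G$ is conjugate to $C$ while $C' \not\leq C$ (equal orders would force $C' = C$), so $G/C \notin \IC$, and by the maximality clause of Theorem~\ref{thm htpy}(ii) the spectrum $\Loc{\KXGP}{e_C}$ does not admit $\tilde{N}_C^G$; thus not all of the norms in question exist. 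The only step that is not entirely formal is this last passage from ``$H/K \notin \IC$'' to ``the norm is not available on $\Loc{\KXGP}{e_C}$'', but this is exactly what Theorem~\ref{thm htpy}(ii) provides, and it is used in the same way to deduce Corollary~\ref{intro cor principal block KU}, so I anticipate no genuine obstacle.
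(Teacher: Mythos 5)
Your proposal is correct and follows exactly the route the paper intends: the corollary is stated without proof because it is immediate from Theorem~\ref{thm htpy} together with the description of the admissible sets of $\IC$ in Proposition~\ref{prop indexing system IL} (equivalently, condition (e) of Theorem~\ref{intro summary thm}), which is precisely your reduction, and your observation that $K\leq H$ makes the hypothesis equivalent to that of Corollary~\ref{cor alg normal} plus the short normality argument (take $K=C$, $H=G$ in the non-normal case) is the intended elementary content.
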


\subsection{Multiplicativity of the idempotent splitting} \label{subsect:htpy mult splitting}
We can also describe the multiplicativity of the entire idempotent splitting:

\begin{cor} \label{local cor htpy splitting}
Let $\Ocyc$ be a $\Sigma$-cofibrant $N_\infty$ operad realizing the indexing system $\Icyc = \bigcap_{(C)} \IC$.
Then the idempotent splitting
\[ \KXGP \simeq \prod_{(C)} \Loc{\KXGP}{e_C} \]
is an equivalence of $\Ocyc$-algebras. Here, the product is taken over all
conjugacy classes of cyclic $\Primes$-perfect subgroups of $G$.
\end{cor}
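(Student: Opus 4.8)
The plan is to deduce Corollary~\ref{local cor htpy splitting} from Theorem~\ref{thm htpy} together with the structural results about $\Icyc$ already recorded in §\ref{section:alg}, running entirely parallel to the proof of Proposition~\ref{prop alg splitting} but one level up, in the homotopy category of $G$-spectra. First I would recall that by Theorem~\ref{intro thm idempotents RU} (via Corollary~\ref{intro cor subrings}) the elements $\lin(e_C) \in \pi_0^G(\KXGP) \cong \RXP{G}$ are mutually orthogonal primitive idempotents summing to one, so the orbit-wise idempotent splitting $\KXGP \simeq \prod_{(C)} \Loc{\KXGP}{e_C}$ exists as an equivalence of $G$-spectra (indeed of homotopy Mackey functors); the content to be added is that this equivalence is one of $\Ocyc$-algebras and that $\Ocyc$ is optimal.

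For the first assertion, I would observe that $\Icyc = \bigcap_{(C)} \IC$, so every admissible set of $\Ocyc$ is admissible for each $\OC$. By Theorem~\ref{thm htpy}~i) each canonical localization map $\KXGP \to \Loc{\KXGP}{e_C}$ is a map of $\OC$-algebras, hence in particular a map of $\Ocyc$-algebras (restricting the operad action along $\Sigma^\infty_+\Ocyc \to \Sigma^\infty_+\OC$, which exists up to homotopy since $\Icyc \subseteq \IC$). Taking the product over the conjugacy classes of cyclic $\Primes$-perfect subgroups, the assembled map $\KXGP \to \prod_{(C)} \Loc{\KXGP}{e_C}$ is a map of $\Ocyc$-algebras, and it is an equivalence of underlying $G$-spectra by the previous paragraph; since $N_\infty$ algebra structures are detected on underlying spectra together with the norm data, this is an equivalence of $\Ocyc$-algebras. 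Here I am using Remark~\ref{remark comm} to know that $\KXGP$ itself is an $\Ocyc$-algebra to begin with, so the statement is not vacuous.

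For maximality of $\Ocyc$ — which, strictly speaking, is not part of the displayed statement but is the natural companion assertion and parallels Proposition~\ref{prop alg splitting} — the argument is: if $\cP$ were strictly larger than $\Ocyc$ in the poset of homotopy types of $N_\infty$ operads and the splitting were still an equivalence of $\cP$-algebras, then each factor $\Loc{\KXGP}{e_C}$ would be a $\cP$-algebra under $\KXGP$, so by the maximality clause of Theorem~\ref{thm htpy}~ii) its indexing system would be contained in $\IC$ for every $(C)$, hence in $\Icyc$, contradicting $\cP > \Ocyc$. The one point requiring a sentence of care is that a retract of a $\cP$-algebra in the homotopy category (here $\Loc{\KXGP}{e_C}$ as a retract of $\prod \Loc{\KXGP}{e_{C'}}$) is again a $\cP$-algebra; this is standard since the relevant idempotent is realized by an actual map of $\cP$-algebras (multiplication by $\lin(e_C)$, which is a central idempotent and hence an $\cP$-algebra endomorphism).

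The main obstacle I anticipate is purely bookkeeping rather than conceptual: being careful about the distinction between an operad action that literally restricts along an operad map $\Ocyc \to \OC$ versus one that only agrees up to the choices of $\Sigma$-cofibrant models, and making sure that "equivalence of $\Ocyc$-algebras" is interpreted in the homotopy-invariant sense of \cite{GW} and \cite{BH:OperMult} so that detecting it on underlying spectra is legitimate. None of this requires new input: it is exactly the same reduction that proved Proposition~\ref{prop alg splitting}, transported across the preservation statement Proposition~\ref{prop translation White HH}, and I would keep the written proof to three or four sentences citing Theorem~\ref{thm htpy}, Remark~\ref{remark comm}, and the definition $\Icyc = \bigcap_{(C)}\IC$.
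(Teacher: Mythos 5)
Your proposal is correct and is essentially the argument the paper intends: the corollary follows by assembling the localization maps, which are $\OC$-algebra maps by Theorem~\ref{thm htpy}~i) and hence $\Ocyc$-algebra maps since $\Icyc = \bigcap_{(C)} \IC$, and noting that the resulting map is an underlying equivalence because the $\lin(e_C)$ are mutually orthogonal idempotents summing to one (with weak equivalences of operad algebras created on underlying $G$-spectra). The extra maximality discussion and the care about restricting along $\Sigma$-cofibrant models are fine but not needed for the stated corollary.
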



\phantomsection
\addcontentsline{toc}{section}{Bibliography}
\bibliographystyle{alpha}
{\footnotesize \bibliography{references} }

\end{document}